
\documentclass[a4paper,10pt]{article}
\usepackage[english]{babel}
\usepackage{graphicx}
\usepackage[margin=.3cm]{caption}
\usepackage[utf8]{inputenc}
\usepackage[T1]{fontenc}
\usepackage{lmodern}
 \usepackage[normalem]{ulem}
 \usepackage{bbm}
 \usepackage{enumerate}
 \usepackage{amsmath}
 \usepackage{dsfont}
 \usepackage{amssymb}
 \usepackage[mathscr]{eucal}
 \usepackage{mathtools}
 \usepackage{hyperref}

\usepackage{amsthm}
\theoremstyle{plain}

\newtheorem{thm}{Theorem}[section]
\newtheorem{lemma}{Lemma}[section]
\newtheorem{conj}[thm]{Conjecture}
\newtheorem{prop}[thm]{Proposition}
\newtheorem{coro}[thm]{Corollary}
\newtheorem{fact}{Fact}
\newtheorem{question}{Question}

\theoremstyle{remark}
\newtheorem*{rem}{Remark}

\theoremstyle{definition}

\newtheorem*{definition}{Definition of the finite-size criterion}


\newcommand{\N}{\ensuremath{\mathbb N}}
\newcommand{\Z}{\ensuremath{\mathbb Z}}
\newcommand{\bul}{\ensuremath{\bullet}}
\newcommand{\R}{\ensuremath{\mathbb R}}
\newcommand{\grp}{\ensuremath{G}}
\newcommand{\Grp}{\ensuremath{{\mathbb G}}}
\newcommand{\Graph}{\mathfrak{G}}
\newcommand{\gr}[1]{\mathsf{Graph}\left(#1\right)}

\newcommand{\gre}[2][e]{\mathsf{Graph}_{\mathbf{#1}}(#2)}


\newcommand{\lr}[1][]{\xleftrightarrow{#1\:}}
\newcommand{\nlr}[1][]{\overset{#1\:}{\longleftrightarrow} \kern -17pt
  \times \kern +7 pt}
\newcommand{\lrc}[2][]{\xleftrightarrow[#2]{#1}}
\renewcommand{\P}{\mathrm{{\mathbf P}}}
\newcommand{\Px}[1]{\mathrm{{\mathbf P}} \left [ #1 \right ]}

\newcommand{\Pp}[2][p]{\mathrm{{\mathbf P}}_{\!#1} \left \lbrack #2 \right \rbrack}
\renewcommand{\emph}{\textbf}

\mathtoolsset{showonlyrefs}

\date{\today} \author{S\'ebastien {\sc Martineau}\thanks{Both
    autors have been supported by the ANR grant MAC2
(ANR-10-BLAN-0123).},
  Vincent {\sc Tassion}\footnotemark[1]} \title{ \LARGE Locality of percolation for
  abelian Cayley graphs}
\begin{document}

\maketitle

\begin{abstract}
  We prove that the value of the critical probability for percolation
  on an abelian Cayley graph is determined by its local structure.
  This is a partial positive answer to a conjecture of Schramm: the
  function $\mathrm{p_c}$ defined on the set of Cayley graphs of
  abelian groups of rank at least $2$ is continuous for the
  Benjamini-Schramm topology. The proof involves group-theoretic tools
  and a new block argument.
\end{abstract}

\section{Introduction}

In the  paper \cite{bs1}, Benjamini and Schramm launched
the study of percolation in the general setting of transitive graphs.
Among the numerous questions that have been studied in this setting
stands the question of locality: roughly, ``does the value of the
critical probability depend only on the local structure of the
considered transitive graph ?'' This question emerged in \cite{bnp}
and is formalized in a conjecture attributed to Oded Schramm. In the same
paper, the particular case of (uniformly non-amenable) tree-like
graphs is treated.

In the present paper, we study the question of locality in the context
of abelian groups. 
\begin{itemize}
\item Instead of working in the geometric setting of transitive
  graphs, we employ the vocabulary of groups --- or more precisely of
  \emph{marked groups}, as presented in section~\ref{markedgroups}.
  This allows us to use
  additional tools of algebraic nature, such as quotient maps, that are
  crucial to our approach. These tools could be  useful to tackle
  Schramm's conjecture in a more general framework than the one
  presented in this paper, e.g.\@ Cayley graphs of nilpotent groups.

\item We extend renormalization techniques developed in \cite{gm} by
  Grimmett and Marstrand for the study of percolation on $\Z^d$
  (equipped with its standard graph structure). The Grimmett-Marstrand
  theorem answers positively the question of locality for the
  $d$-dimensional hypercubic lattice. With little extra effort, one
  can give a positive answer to Schramm's conjecture in the context
  of abelian groups, under a symmetry assumption. Our main achievement
  is to improve the understanding of supercritical bond percolation on
  general abelian Cayley graphs: such graphs do not have enough
  symmetry for Grimmett and Marstrand's arguments to apply directly.
  The techniques we develop here may be used to extend other results
  of statistical mechanics from symmetric lattices to lattices
    which are not stable under any reflection.
\end{itemize}

\subsection{Statement of Schramm's conjecture}
The following paragraph presents the vocabulary needed to state
Schramm's conjecture (for more details, see \cite{bnp}).

\paragraph{Transitive graphs}

We recall here some standard definitions from graph theory. A graph is
said to be \emph{transitive} if its automorphism group acts
transitively on its vertices. Let $\Graph$ denote the space of
(locally finite, non-empty, connected) transitive graphs considered up
to isomorphism. By abuse of notation, we will identify a graph with
its isomorphism class. Take $\mathcal{G}\in \Graph$ and $o$ any vertex
of $\mathcal{G}$. Then consider the \emph{ball} of radius $k$ (for the
graph distance) centered at~$o$, equipped with its graph structure and
rooted at $o$. Up to isomorphism of rooted graphs, it is independent
of the choice of $o$, and we denote it by $\mathcal
B_{\mathcal{G}}(k)$. If $\mathcal{G},\mathcal{H}\in\Graph$, we set the
distance between them to be $2^{-n}$, where
\begin{equation}
  n:=\max\{k:B_{\mathcal{G}}(k) \simeq B_{\mathcal{H}}(k)\}\in\N\cup\{\infty\}.
\end{equation}
This defines the \emph{Benjamini-Schramm
  distance} on the set $\Graph$. It was introduced in \cite{bs2} and
\cite{bnp}.

\paragraph{Locality in percolation theory}

We will use the standard definitions from percolation theory and refer
to \cite{grim} and \cite{lp} for background on the subject. To any
$\mathcal{G} \in \Graph$ corresponds a critical parameter
$\mathrm{p_c}(\mathcal{G})$ for i.i.d.\@ bond percolation. One can see
$\mathrm{p_c}$ as a function from $\Graph$ to $[0,1]$. The locality
question is concerned by the continuity of this function.
\begin{question}[Locality of percolation]\label{ques:locality}
  Consider a sequence of transitive graphs $(\mathcal
  G_n)$ that converges to a limit $\mathcal G$.
  \begin{center}
    Does the convergence $\displaystyle
    \mathrm{p_c}(\mathcal G_n) \xrightarrow[{n\to
      \infty}]{} \mathrm{p_c}(\mathcal G)$ hold?
  \end{center}
\end{question}
With this formulation, the answer is negative. Indeed, for the usual
graph structures, the following convergences hold:
\begin{itemize}
\item $\left(\Z/{n\Z}\right)^2\xrightarrow[{n\to \infty}]{} \Z^2$,

\item$\Z/{n\Z}\times \Z\xrightarrow[{n\to
    \infty}]{} \Z^2$.
\end{itemize} 
In both cases, the critical parameter is constant equal to $1$
all along the sequence and jumps to a non trivial value in the limit.
The following conjecture, attributed to Schramm and formulated
in~\cite{bnp}, states that Question~\ref{ques:locality} should have a
positive answer whenever the previous obstruction is avoided.

\begin{conj}[Schramm]\label{conj:Schramm}
  Let $\mathcal{G}_n\xrightarrow[{n\to\infty}]{}\mathcal{G}$ denote a
  converging sequence of transitive graphs. Assume that $\sup_n
  \mathrm{p_c}(\mathcal{G}_n)<1$. Then $
  \mathrm{p_c}(\mathcal{G}_n)\xrightarrow[{n\to
    \infty}]{}\mathrm{p_c}(\mathcal{G}). $
\end{conj}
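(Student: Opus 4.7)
The plan is to restrict to the abelian Cayley graph setting announced in the abstract and to prove the two inequalities $\liminf_n \mathrm{p_c}(\mathcal G_n) \geq \mathrm{p_c}(\mathcal G)$ and $\limsup_n \mathrm{p_c}(\mathcal G_n) \leq \mathrm{p_c}(\mathcal G)$ separately. The common ingredient will be the sharpness of the phase transition (Aizenman--Barsky--Menshikov), which allows one to replace the global event ``there is an infinite cluster'' by quantities measurable in a finite ball; such quantities behave well under Benjamini--Schramm convergence.

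For the lower semicontinuity of $\mathrm{p_c}$, fix $p < \mathrm{p_c}(\mathcal G)$. By sharpness, the two-point function in $\mathcal G$ at level $p$ decays exponentially. One then formulates a \emph{finite-size criterion for subcriticality}: if the probability that the origin is connected to distance $R$ in a given ball is small enough, the system is subcritical. Since $\mathcal B_{\mathcal G_n}(R)\simeq \mathcal B_{\mathcal G}(R)$ for $n$ large, the criterion transfers to $\mathcal G_n$, and we obtain $p \leq \mathrm{p_c}(\mathcal G_n)$ for $n$ large. The assumption $\sup_n \mathrm{p_c}(\mathcal G_n)<1$ is not needed here.

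The harder direction is the upper semicontinuity. Given $p > \mathrm{p_c}(\mathcal G)$, one wants a symmetric \emph{finite-size criterion for supercriticality}: a ``seed event'' occurring in a large but finite subgraph which, in the supercritical regime, has probability close to one, and which, once observed, can be iterated along translations of the underlying abelian group to construct an infinite cluster. Property (i)---the high probability in $\mathcal G$---then transfers automatically to $\mathcal G_n$ by local convergence, and the abelian group structure (carried by the marked-group description of $\mathcal G_n$) allows the same iteration scheme to be run in $\mathcal G_n$, yielding an infinite cluster there. The assumption $\sup_n \mathrm{p_c}(\mathcal G_n) < 1$ should be needed precisely to guarantee that each $\mathcal G_n$ is genuinely ``at least two-dimensional'', so that the iteration has room to spread.

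I expect the main obstacle to lie in designing such a self-reproducing seed event for a general abelian Cayley graph. The Grimmett--Marstrand construction for $\mathbb Z^d$ relies crucially on reflections: block crossings are combined using invariance under axis-reversals, and the sprinkling step uses FKG together with this symmetry to close open clusters back to prescribed faces. A generic abelian Cayley graph---e.g.\@ $\mathbb Z^2$ with generating set $\{(1,0),(0,1),(1,1)\}$---need not admit any non-trivial reflection. The strategy I would follow is twofold: first, exploit quotient maps between marked abelian groups to reduce, when possible, to generating sets with more symmetry, carrying along percolation estimates via monotone couplings; second, where no such reduction is available, replace symmetry-based gluing by a multiscale block argument that concatenates ``good'' boxes using only translations and controls the error produced by the asymmetric geometry through a sprinkling on a second, independent scale. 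Making this two-scale sprinkling robust enough to apply uniformly along the converging sequence $(\mathcal G_n)$ is where I expect the real technical content of the paper to be.
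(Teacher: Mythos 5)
There is a fundamental mismatch here: the statement you were asked about is Conjecture~\ref{conj:Schramm}, which the paper does \emph{not} prove and which remains open. The paper proves only the special case of Cayley graphs of abelian groups (Theorem~\ref{continuitythmtwo}), and your very first sentence concedes the same restriction. So even if your outline were complete, it would not be a proof of the stated conjecture; for general transitive graphs no geometric characterization of $\mathrm{p_c}<1$ is known, and nothing in your sketch addresses how the iteration scheme would be set up without a group structure. You should have flagged that the statement is a conjecture rather than presenting a plan as if it settled it.

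Within the abelian case, your outline correctly identifies the shape of the paper's argument (a finite-size criterion for supercriticality that transfers under local convergence, plus a sprinkled block/renormalization argument run with translations only), but the decisive step --- building a self-reproducing seed event without any reflection symmetry --- is named as ``where I expect the real technical content to be'' rather than carried out. That is precisely the gap. The paper fills it with three specific devices you do not supply: an asymmetric ``square root trick'' (Lemma~\ref{lemFKG} and Lemma~\ref{lem:connections}) that splits connection events between two targets without assuming their probabilities are equal; a balancing construction via the quantity $\ell_{\mathrm{eq}}(h)$ that locates a good quadruple $(a,b,u,v)$ where connections to all four sides of a parallelogram are simultaneously likely (Lemma~\ref{lem:geometric}); and the replacement of Grimmett--Marstrand's ``seeds'' by long self-avoiding paths that are connected to infinity with high probability (Lemma~\ref{lem:path}), glued using uniqueness of the infinite cluster. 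Your proposed ``two-scale sprinkling'' is too vague to substitute for these. A smaller divergence: for lower semicontinuity the paper does not use sharpness of the phase transition at all --- it observes that nearby marked abelian groups are quotients of the limit and invokes $\mathrm{p_c}^{\!\!\!\bul}(G^\bul/\Lambda)\ge\mathrm{p_c}^{\!\!\!\bul}(G^\bul)$ (Theorem~\ref{quotient}); your sharpness route is plausible but is a different (and heavier) argument.
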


It is unknown whether $\sup_n \mathrm{p_c}(\mathcal{G}_n)<1$ is
equivalent or not to $\mathrm{p_c}(\mathcal{G}_n) < 1$ for all $n$. In
other words, we do not know if $1$ is an isolated point in the set of
critical probabilities of transitive graphs. Besides, no geometric
characterization of the probabilistic condition
$\mathrm{p_c}(\mathcal{G})<1$ has been established so far, which
constitutes part of the difficulty of Schramm's conjecture.

\subsection{The Grimmett-Marstrand theorem}

The following theorem, proved in~\cite{gm}, is an instance of locality
result. It was an important step in the comprehension of the
supercritical phase of percolation.
\label{sec:slabs-conv-symm}
\begin{thm}[Grimmett-Marstrand]
  Let $d\geq 2$. For the usual graph structures, the following
  convergence holds:
  \begin{equation}
    \mathrm{p_c} \left(\Z^2\times\{-n,\ldots,n\}^{d-2}\right)
    \underset{n\to\infty}{\longrightarrow} \mathrm{p_c}\left(\Z^{d}\right).
\end{equation}
\end{thm}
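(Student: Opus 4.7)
The inequality $\mathrm{p_c}(\Z^d) \leq \mathrm{p_c}(\Z^2 \times \{-n,\ldots,n\}^{d-2})$ is immediate from the inclusion of each slab into $\Z^d$, so only the upper bound requires work: for every $p > \mathrm{p_c}(\Z^d)$, the slab of half-thickness $n$ must percolate at parameter $p$ once $n$ is large enough. The plan is a block renormalisation with sprinkling. Fix such a $p$ and choose an intermediate value $p' \in (\mathrm{p_c}(\Z^d), p)$, keeping the margin $p - p'$ in reserve. The aim is to isolate a \emph{finite-size criterion}: a local crossing event $A_N$ on a box $[-N,N]^2$ sitting inside a slab of thickness $M \ll N$ such that, once $\P_p[A_N]$ is close enough to $1$, the Liggett--Schonmann--Stacey domination theorem forces the associated block process (each block declared \emph{good} when its translate of $A_N$ occurs) to stochastically dominate a supercritical Bernoulli site percolation on $\Z^2$; since the latter percolates, the slab then contains an infinite cluster at $p$.

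To verify the criterion I would exploit the fact that at density $p'$ the infinite cluster in $\Z^d$ is almost surely unique and carries a positive density $\theta(p')$ of vertices. A first/second moment argument inside a large cube $B_N = [-N,N]^d$ then shows that many vertices of a given two-dimensional face of $B_N$ are joined, through the unique infinite cluster, to vertices of a distant translate of that face by disjoint open $\Z^d$-paths. These paths a priori use all $d$ coordinates, but each of them is contained in some slab of bounded, albeit random, thickness. The sprinkling step uses the $p - p'$ of fresh randomness to reinforce such a connection inside a fixed slab: declaring each $p'$-closed edge independently open with probability $(p-p')/(1-p')$, one glues together a prescribed number of independent $\Z^d$-connections into a single piece that remains within a slab of fixed thickness $M$ and crosses $[-N,N]^2$, thereby pushing $\P_p[A_N]$ as close to $1$ as desired.

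The hard part is precisely this sprinkling step, which must turn a merely positive-density connection in the bulk into a connection confined to a fixed slab with probability arbitrarily close to $1$, despite the loss of $d-2$ dimensions. Grimmett and Marstrand carry it out by an induction on scales: at each step a small fraction of the budget $p - p'$ is spent to take a ``square root of a union'' over many independent attempts, boosting the slab-connection probability from a fixed positive value to nearly one. Once $\P_p[A_N]$ has been driven sufficiently close to $1$ at some large but fixed $N$ and $M$, the stochastic domination above yields an infinite cluster in the slab $\Z^2 \times \{-n,\ldots,n\}^{d-2}$ for every $n \geq M$, which completes the proof.
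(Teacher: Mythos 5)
The paper does not prove this theorem: it is quoted verbatim from Grimmett and Marstrand \cite{gm} and used as a black box (only Proposition~\ref{thm:first-cont-result} is deduced from it). So the relevant comparison is between your sketch and the original proof in \cite{gm}, not anything proved in this paper.

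Your outline correctly identifies the architecture of that proof --- the lower bound by inclusion of the slab in $\Z^d$, the upper bound by block renormalisation with sprinkling --- but as written it is a description of the strategy rather than a proof, and you concede as much (``the hard part is precisely this sprinkling step''). Two concrete issues. First, the static formulation of your first paragraph (define a slab-confined crossing event $A_N$, show its probability is close to $1$, then apply Liggett--Schonmann--Stacey) is circular as stated: driving the probability of a crossing \emph{confined to a slab of fixed thickness} close to $1$ at a fixed $p>\mathrm{p_c}(\Z^d)$ is essentially the statement being proved, and no first/second-moment argument from the positive density of the infinite cluster delivers it. Grimmett and Marstrand instead run a \emph{dynamic} renormalisation: the event they control is ``starting from a seed already produced by the exploration, and after sprinkling with fresh randomness, the cluster reaches a new seed at a prescribed neighbouring location,'' and the comparison is with an adapted process via their Lemma~1, each step consuming an independent portion of the budget $p-p'$ --- exactly the structure the present paper reproduces in section~\ref{sec:renormalization}. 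Your third paragraph gestures at this dynamic version, but it is inconsistent with the static set-up of your first. Second, the entire technical substance of \cite{gm} --- the construction of seeds on the faces of large boxes and the multi-scale lemma boosting the seed-to-seed connection probability from merely positive to near $1$ while keeping the connection inside a bounded slab --- is named but not carried out. The proposal is therefore an accurate summary of the known route, not a proof.
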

\begin{rem}
  Grimmett and Marstrand's proof covers more generally the case of
  edge structures on $\Z^d$ that are  invariant under both translation
  and reflection.
\end{rem}
The graph $\Z^2\times\{-n,\ldots,n\}^{d-2}$ is not transitive, so the
result does not fit exactly into the framework of the previous
subsection. However, as remarked in~\cite{bnp}, one can easily deduce
from it the following statement:
\begin{equation}
  \label{eq:1}
  \mathrm{p_c} \left(\Z^2\times\left(\frac{\Z}{n\Z}\right)^{d-2}\right)
  \underset{n\to\infty}{\longrightarrow} \mathrm{p_c}\left(\Z^{d}\right).
\end{equation}

Actually, after having introduced the space of marked abelian groups,
we will see in section~\ref{specialcase} that one can deduce from the
Grimmett-Marstrand theorem a statement that is much stronger than
convergence~\eqref{eq:1}. We will be able to prove that
$\mathrm{p_c}(\Z^d)=\lim \mathrm{p_c}(\mathcal G_n)$ for any sequence
of abelian Cayley graphs $\mathcal G_n$ converging to $\Z^d$ with
respect to the Benjamini-Schramm distance.

\subsection{Main result}
\label{sec:main-results}

In this paper we prove the following theorem, which provides a
positive answer to Question \ref{ques:locality} in the particular case
of Cayley graphs of abelian groups (see definitions in
section~\ref{markedgroups}).

\begin{thm}\label{continuitythmtwo}
  Consider a sequence $(\mathcal G_n)$ of Cayley graphs of abelian
  groups satisfying $\mathrm{p_c}(\mathcal G_n)<1$ for all $n$. If the
  sequence converges to the Cayley Graph $\cal G$ of an abelian group,
  then
  \begin{equation}
    \label{eq:2}
    \displaystyle \mathrm{p_c}(\mathcal G_n) \xrightarrow[n\to
    \infty]{} \mathrm{p_c}(\mathcal G).
\end{equation}
\end{thm}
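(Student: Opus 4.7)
The plan is to establish continuity at $\mathcal G$ by proving both semi-continuity inequalities $\liminf_n \mathrm{p_c}(\mathcal G_n) \ge \mathrm{p_c}(\mathcal G)$ and $\limsup_n \mathrm{p_c}(\mathcal G_n) \le \mathrm{p_c}(\mathcal G)$. The two directions require very different tools: the first follows from the sharpness of the phase transition (available since abelian Cayley graphs are amenable), while the second is a Grimmett-Marstrand-style renormalization that must be adapted to graphs lacking reflection symmetry.

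For the lower semi-continuity, fix $p<\mathrm{p_c}(\mathcal G)$. By sharpness on $\mathcal G$, one can find an integer $N$ so that $\mathbb P_p[o\leftrightarrow\partial B_{\mathcal G}(N)]<\alpha$, where $\alpha$ is a threshold depending only on the common vertex degree, small enough that a standard shell-iteration / Hammersley-type argument forces exponential decay, hence subcriticality, on any graph satisfying this bound. The event $\{o\leftrightarrow\partial B(N)\}$ is measurable with respect to the ball of radius $N+1$, and this ball in $\mathcal G_n$ is isomorphic to the one in $\mathcal G$ for all $n$ large enough by Benjamini-Schramm convergence. The same bound therefore holds in $\mathcal G_n$, giving $p\le \mathrm{p_c}(\mathcal G_n)$ eventually.

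For the upper semi-continuity, fix $p>\mathrm{p_c}(\mathcal G)$ and set up a finite-size criterion $\mathrm{FSC}(p,R,\varepsilon)$: an event measurable with respect to the ball of radius $R$ such that, whenever it holds with probability at least $1-\varepsilon$ at parameter $p$ on any abelian Cayley graph, it certifies $p>\mathrm{p_c}$ of that graph through a block renormalization producing an infinite cluster. Two ingredients are needed: (i) this implication must be uniform across all abelian Cayley graphs of a fixed combinatorial type, and (ii) on $\mathcal G$ itself the criterion must be satisfied for some $R,\varepsilon$ whenever $p>\mathrm{p_c}(\mathcal G)$. Point (ii) then transfers from $\mathcal G$ to $\mathcal G_n$ by Benjamini-Schramm convergence, and applying (i) on $\mathcal G_n$ yields $p>\mathrm{p_c}(\mathcal G_n)$.

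The hard part is point (i): the classical Grimmett-Marstrand block argument for $\mathbb Z^d$ repeatedly reflects partial connections across hyperplanes, but a generic abelian Cayley graph need not be invariant under any nontrivial isometric reflection of its generating set. The plan is to replace reflections with group translations along a $\mathbb Z^d$-direction extracted via the structure theorem for finitely generated abelian groups, using the quotient maps of marked groups (introduced in Section~\ref{markedgroups}) to pass between the whole group and its $\mathbb Z^d$-quotient. Designing a block event robust under such translation-only gluing, and establishing that it is automatically activated throughout the supercritical phase on any abelian Cayley graph, constitutes the new block argument announced in the introduction and is where I expect the bulk of the technical work to lie.
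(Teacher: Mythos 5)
Your overall architecture matches the paper's: the easy inequality is a local/monotonicity statement, and the hard inequality $\limsup_n \mathrm{p_c}(\mathcal G_n)\le \mathrm{p_c}(\mathcal G)$ goes through a finite-size criterion that is (a) activated on $\mathcal G$ whenever $p>\mathrm{p_c}(\mathcal G)$ and (b) transferred to nearby graphs where it triggers a block renormalization. For the easy direction you take a genuinely different route: the paper never invokes sharpness of the phase transition, but instead observes (Proposition~\ref{prop:conv-seq} and Theorem~\ref{quotient}) that any marked abelian group close enough to $\grp^\bul$ is a quotient $\grp^\bul/\Lambda_n$, and that $\mathrm{p_c}$ can only increase under quotients; this is softer and purely algebraic. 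Your sharpness route can be made to work, but as stated it is slightly off: a threshold ``depending only on the vertex degree'' for $\Pp{o\leftrightarrow\partial B(N)}$ does not suffice for the shell-iteration, since the union bound over the sphere $\partial B(N)$ brings in $|\partial B(N)|$; you need a criterion of Hammersley/Simon--Lieb type such as $\sum_{x\in\partial B(N)}\Pp{o\leftrightarrow x}<1$, which is still local and still transfers, so this is reparable. You also do not address the passage from Benjamini--Schramm convergence of Cayley graphs to convergence of marked groups (the compactness argument of Lemma~\ref{lem:reduce}), which is needed to make the marked-group machinery applicable at all.

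The genuine gap is in point (i) of your upper semi-continuity plan: you correctly identify that the Grimmett--Marstrand reflections are unavailable, announce that you will ``replace reflections with group translations,'' and then state that designing the block event ``is where I expect the bulk of the technical work to lie.'' That is precisely the content of the paper, and it is not supplied. Concretely, the paper's substitute for reflection symmetry consists of: a generalized square-root trick (Lemma~\ref{lemFKG}) valid for two increasing events with merely \emph{comparable} probabilities, combined with an intermediate-value argument in a continuous geometric parameter ($\ell_{\mathrm{eq}}$ in Lemma~\ref{lem:geometric}) that manufactures ``balanced'' boxes where the two competing crossing events have close probabilities; the replacement of Grimmett--Marstrand seeds by long self-avoiding paths (Lemma~\ref{lem:path}) together with uniqueness of the infinite cluster, which is what makes the exploration iterable without controlling the unexplored region; uniformity of the criterion over \emph{all} orthonormal bases of $\R^r$ (via compactness of the set of bases), which is essential because the direction of the rank-$2$ plane surviving in the quotient $\grp^\bul/\Lambda_n$ is not known in advance; and a sprinkling step (the passage from $p$ to $p+\delta$), without which the renormalization on the quotient cannot be closed. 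None of these mechanisms appears in your proposal, so the hard inequality remains unproved.
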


We now give three examples of application of this theorem. Let $d\ge 2$, fix a generating set $S$ of
$\Z^d$, and denote by $\mathcal G$ the associated Cayley graph of
$\Z^d$.

\begin{description}
\item[Example 1:] There exists a natural Cayley graph $\mathcal G_n$
  of $\Z^2\times\left(\frac{\Z}{n\Z}\right)^{d-2}$ that is covered by
  $\mathcal G$. For such sequence, the convergence~\eqref{eq:2} holds,
  and generalizes~\eqref{eq:1}.

\item[Example 2:] Consider the generating set of $\Z^d$ obtained by
  adding to  $S$ all the $n\cdot s$, for $s\in S$. The corresponding
  Cayley graph $\mathcal H_n$ converges to the Cartesian product $\cal G
  \times \cal G$, and we get
  \begin{equation}
         \mathrm{p_c}(\mathcal H_n) \xrightarrow[n \to  \infty]{}
    \mathrm{p_c}(\mathcal G \times  \mathcal G).
  \end{equation}
  
\item[Example 3:] Consider a sequence of vectors $x_n\in \Z^d$ such
  that $\lim|x_n|=\infty$, and write $\mathcal G_n$ the Cayley graph
  of $\Z^d$ constructed from the generating set $S\cup \{x_n\}$. Then
  the following convergence holds:
  \begin{equation}
    \mathrm{p_c}(\mathcal G_n) \xrightarrow[n \to  \infty]{}
    \mathrm{p_c}(\mathcal G \times  \Z).
  \end{equation}
\end{description}

The content of Example 2 was obtained  in \cite{lss} when $\cal G$  is
the canonical Cayley graph of $\Z^d$, based on Grimmett-Marstrand theorem. In the statement above,
$\mathcal G$ can be any Cayley graph of $\Z^d$, and Grimmett-Marstrand
theorem cannot be applied without additional symmetry assumption.

\subsection{Questions}
\label{sec:questions}

In this paper, we work with abelian groups because their structure is
very well understood. An additional important feature is that the net
formed by large balls of an abelian Cayley graph has roughly the same
geometric structure as the initial graph. Since nilpotent groups also
present these characteristics, the following question appears as a
natural step between Theorem~\ref{continuitythmtwo} and
Question~\ref{ques:locality}.
\begin{question}\label{ques:nilpotent}
  Is it possible to extend Theorem~\ref{continuitythmtwo} to nilpotent
  groups?
\end{question}

This question can also be asked for other models of statistical
mechanics than Bernoulli percolation. In questions~\ref{ques:3} and
\ref{ques:4}, we mention two other natural contexts where the locality
question can be asked.

Theorem~2.1 of \cite{b} states that locality holds for the
critical temperature of the Ising model for the hypercubic lattice.
This suggests the following question.

\begin{question}
  \label{ques:3}
  Is it possible to prove Theorem~\ref{continuitythmtwo} for the
  critical temperature of the Ising model instead of $\mathrm{p_c}$ ?
\end{question}

Define $c_n$ as the number of self-avoiding walks starting from a
fixed root of a transitive graph $\mathcal G$. By
sub-multiplicativity, the sequence $c_n^{1/n}$ converges to a limit
called the connective constant of $\mathcal G$. In this context, the
following question was raised by I. Benjamini \cite{ib}:  
\begin{question}
\label{ques:4}
Does the connective constant depend continuously on the
considered infinite transitive graph?
\end{question}

\subsection{Organization of the paper}
\label{sec:organization-paper}

Section~\ref{markedgroups} presents the material on marked abelian
groups that will be needed to establish
Theorem~\ref{continuitythmtwo}. In section~\ref{sec:heuristics}, we
explain the strategy of the proof, which splits into two main lemmas.
Sections~\ref{supercritical} and \ref{sec:renormalization} are each
devoted to the proof of one of these lemmas.
 
We drive the attention of the interested reader to
Lemma~\ref{lem:path}. Together with the uniqueness of the infinite
cluster, it allows to avoid the construction of ``seeds'' in Grimmett
and Marstrand's approach.

\section{Marked abelian groups and locality}
\label{markedgroups}

In this section, we present the space of marked abelian groups and
show how problems of Benjamini-Schramm continuity for abelian Cayley
graphs can be reduced to continuity problems for marked abelian group.
Then, we provide a first example illustrating the use of marked
abelian groups in proofs of Benjamini-Schramm continuity. Finally,
section~\ref{sec:heuristics} presents the proof of
Theorem~\ref{continuitythmone}, which is the marked group version of
our main theorem.

General marked groups are introduced in \cite{grig}. Here, we only
define marked groups and Cayley graphs in the \emph{abelian} setting,
since we do not need a higher level of generality.

\subsection{The space of marked abelian groups}
\label{sec:space-marked-abelian}
\renewcommand{\Grp}{\mathbf{G}} Let $d$ denote a positive integer. A
\emph{($d$-)marked abelian group} is the data of an abelian group
together with a generating $d$-tuple $(s_1,\ldots,s_d)$, up to
isomorphism\footnote{$(G;s_1,\dots,s_d)$ and $(G';s'_1,\dots,s'_d)$
  are isomorphic if there exists a group isomorphism from $G$ to $G'$
  mapping $s_i$ to $s'_i$ for all $i$.}. We write $\Grp_d$ the set of
the $d$-marked abelian groups. Elements of $\Grp_d$ will be denoted by
$[\grp ; s_1,\ldots,s_d ]$ or $\grp^\bullet$, depending on whether we
want to insist on the generating system or not. Finally, we write
$\Grp$ the set of all the marked abelian groups: it is the disjoint
union of all the $\Grp_d$'s.

\paragraph{Quotient of a marked abelian group}
Given a marked abelian group $\grp^\bul=[G;s_1,\ldots,s_d]$ and a
subgroup $\Lambda$ of $G$, we define the \emph{quotient}
$\grp^\bul/\Lambda$ by
\begin{equation}
\grp^\bul/\Lambda=[\grp/\Lambda;\overline{s_1},\dots,\overline{s_d}],
\end{equation}
where $(\overline{s_1},\dots,\overline{s_d})$ is the image of
$(s_1,\ldots,s_d)$ by the canonical surjection from $G$ onto
$G/\Lambda$. Quotients of marked abelian groups will be crucial to
define and understand the topology of the set of marked abelian
groups. In particular, for the topology defined below, the quotients
of a marked abelian group $\grp^\bul$ forms a neighbourhood of it.

\paragraph{The topology} We first define the topology on $\Grp_d$. Let
$\mathbf{\delta}$ denote the canonical generating system of $\Z^d$. To
each subgroup $\Gamma$ of $\Z^d$, we can associate an element of
$\Grp_d$ via the mapping
\begin{equation}
    \Gamma \longmapsto[\Z^d; \mathbf{\delta}]/\Gamma.\label{eq:3}
\end{equation}
One can verify that the mapping defined by~\eqref{eq:3} realizes a
bijection from the set of the subgroups of $\Z^d$ onto $\Grp_d$. This
way, $\Grp_d$ can be seen as a subset of~$\{0,1\}^{\Z^d}$. We consider
on $\Grp_d$ the topology induced by the product topology on
$\{0,1\}^{\Z^d}$. This makes of $\Grp_d$ a Hausdorff compact space.
Finally, we equip $\Grp$ with the topology generated by the open
subsets of the $\Grp_d$'s. (In particular, $\Grp_d$ is an open subset
of $\Grp$.)

Let us illustrate the topology with three examples of converging sequences: 
\begin{itemize}
\item $[\Z/n\Z;1]$ converges to $[\Z;1]$.
\item $[\Z;1,n,\dots,n^d]$ converges to $[\Z^d;\mathbf{\delta}]$.
\item $[\Z;1,n,n+1]$ converges to 
$[\Z^2;\mathbf{\delta}_1,\mathbf{\delta}_2,\mathbf{\delta}_1+\mathbf{\delta}_2]$.
\end{itemize}

\paragraph{Cayley graphs}
\label{sec:cayley-graphs}
Let $\grp^\bul=[\grp;s_1,\dots,s_d]$ be a marked abelian group. Its
Cayley graph, denoted $\mathsf{Cay}(\grp^\bul)$, is defined by taking
$\grp$ as vertex-set and declaring $a$ and $b$ to be neighbours if
there exists $i$ such that $a=b \pm s_i$. It is is uniquely defined up
to graph isomorphism. We write $B_{\grp^\bul}(k)\subset G$ the ball of
radius $k$ in $\mathsf{Cay}(\grp^\bul)$, centered at $0$.

\paragraph{Converging sequences of marked abelian groups}
\label{sec:conv-sequ-mark}
In the rest of the paper, we will use the topology of $\Grp$ through
the following proposition, which gives a geometric flavour to the
topology. In particular, it will allow to do the connection with the
Benjamini-Schramm topology through corollary~\ref{continuity}.
\begin{prop}
  \label{prop:conv-seq}
  Let $(\grp^\bul_n)$ be a sequence of marked abelian groups that
  converges to
  some $\grp^\bul$. Then, for any integer $k$, the following
  holds for $n$ large enough:
  \begin{enumerate}
  \item $\grp^\bul_n$ is of the form $\grp^\bul/\Lambda_n$, for some
    subgroup $\Lambda_n$ of $G$, and
  \item $\Lambda_n\cap B_{G^\bul}(k)=\{0\}$.
  \end{enumerate}
\end{prop}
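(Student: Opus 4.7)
The plan is to translate everything into a statement about subgroups of $\Z^d$, using the bijection from~\eqref{eq:3} between $\Grp_d$ and the set of subgroups of $\Z^d$. Since $\Grp_d$ is open in $\Grp$ and contains $\grp^\bul$, for $n$ large the marked group $\grp^\bul_n$ lies in $\Grp_d$. I would write $\grp^\bul=[\Z^d;\mathbf{\delta}]/\Gamma$ and $\grp^\bul_n=[\Z^d;\mathbf{\delta}]/\Gamma_n$. Convergence $\grp^\bul_n\to\grp^\bul$ then translates directly into pointwise convergence of the indicator functions of the $\Gamma_n$'s to that of $\Gamma$ on $\Z^d$.

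For item~(1), the key structural input is that every subgroup of $\Z^d$ is finitely generated. I would fix a finite generating set $\{g_1,\dots,g_r\}$ of $\Gamma$; pointwise convergence at each $g_i$ yields, for $n$ large, the inclusion $\Gamma\subset\Gamma_n$. Setting $\Lambda_n := \Gamma_n/\Gamma \subset G$, the third isomorphism theorem, applied at the level of marked groups, gives
\begin{equation}
\grp^\bul/\Lambda_n \;=\; [\Z^d/\Gamma_n;\mathbf{\delta}] \;=\; \grp^\bul_n.
\end{equation}

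For item~(2), I would observe that $B_{\grp^\bul}(k)$ is exactly the image in $G$ of the finite set $\mathcal{B}_k := \{x\in\Z^d : \|x\|_1 \le k\}$ under the canonical projection $\Z^d\to G$. If $\bar g \in \Lambda_n \cap B_{\grp^\bul}(k)$, then $\bar g$ admits a representative $y\in\mathcal{B}_k$, and the relation $y - g \in \Gamma \subset \Gamma_n$ forces $y\in\Gamma_n$. Hence it suffices to show $\Gamma_n\cap\mathcal{B}_k \subset \Gamma$ for $n$ large. Since $\mathcal{B}_k$ is finite, pointwise convergence upgrades to the stronger equality $\Gamma_n\cap\mathcal{B}_k = \Gamma\cap\mathcal{B}_k$ for $n$ large, which closes the argument.

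The only real subtlety is the use of finite generation in step~(1): pointwise convergence of indicator functions on a \emph{finite} set of generators promotes automatically to the set-theoretic inclusion $\Gamma\subset\Gamma_n$, whereas with an infinitely generated subgroup the required uniformity would be absent and the presentation $\grp^\bul_n = \grp^\bul/\Lambda_n$ could fail. Once this is settled, item~(2) is merely the observation that the finitely many lattice points in a prescribed ball are controlled uniformly by the pointwise convergence. I expect that any extension of this proposition to larger classes of groups, such as the nilpotent groups mentioned in section~\ref{sec:questions}, would hinge on an analogous Noetherian-type property.
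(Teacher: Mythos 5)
Your proof is correct and follows essentially the same route as the paper's: pass to subgroups $\Gamma,\Gamma_n\subset\Z^d$ via the bijection~\eqref{eq:3}, use finite generation of $\Gamma$ together with pointwise convergence to get $\Gamma\subset\Gamma_n$ and hence $\grp^\bul_n=\grp^\bul/(\Gamma_n/\Gamma)$, then use $\Gamma_n\cap B_{\Z^d}(k)=\Gamma\cap B_{\Z^d}(k)$ for large $n$ to conclude item~(2). The only difference is cosmetic (you spell out the lifting argument for item~(2) that the paper compresses into the identity $B_{\Z^d/\Gamma}(k)\cap\Lambda_n=\varphi(B_{\Z^d}(k)\cap\Gamma_n)$).
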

\begin{proof}
  Let $d$ be such that $G^\bul \in \Grp_d$. For $n$ large enough, we
  also have $G_n^\bul\in\Grp_d$. Let $\Gamma$ (resp. $\Gamma_n$)
  denote the unique subgroup of $\Z^d$ that corresponds to $G^\bul$
  (resp. $\grp^\bul_n$) via bijection \eqref{eq:3}. The group $\Gamma$
  is finitely generated: we consider $F$ a finite generating subset of
  it. Taking $n$ large enough, we can assume that $\Gamma_n$ contains
  $F$, which implies that $\Gamma$ is a subgroup $\Gamma_n$. We have
  the following situation
  \begin{equation}
    \Z^d \xrightarrow[]{\varphi} \Z^d/\Gamma \xrightarrow[]{\psi_n}
    \Z^d/\Gamma_n.
  \end{equation}
  Identifying $\grp$ with $\Z^d/\Gamma$ and taking
  $\Lambda_n=\mathrm{ker}\, \psi_n=\Gamma_n/\Gamma$, we obtain the
  first point of the proposition.

  By definition of the topology, taking $n$ large enough ensures that
  $\Gamma_n\cap B_{\Z^d}(k)=\Gamma\cap B_{\Z^d}(k)$. We have 
  \begin{align}
    B_{\Z^d/\Gamma}(k)\cap \Lambda_n&=\varphi( B_{\Z^d}(k)\cap\Gamma_n)\\
    &=\varphi(B_{\Z^d}(k)\cap\Gamma)\\
    &=\{0\}.
  \end{align}
  This ends the proof of the second point.
\end{proof}

\begin{coro}
  \label{continuity} The mapping $\mathsf{Cay}$ from $\Grp$ to
  $\Graph$ that associates to a marked abelian group its Cayley graph
  is continuous.
\end{coro}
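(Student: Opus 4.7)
The plan is to unfold the definition of the Benjamini--Schramm distance and reduce the corollary to the statement that, once $\Lambda_n$ avoids a large enough ball, the quotient map realizes a rooted graph isomorphism between the corresponding balls. Fix a converging sequence $\grp^\bul_n \to \grp^\bul$ in $\Grp$ and an integer $k\geq 1$; it suffices to show that for $n$ large enough, the balls $B_{\grp^\bul_n}(k)$ and $B_{\grp^\bul}(k)$, equipped with their induced graph structure and rooted at $0$, are isomorphic.

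By Proposition~\ref{prop:conv-seq} applied with parameter $2k+1$, for all $n$ large enough we may write $\grp^\bul_n = \grp^\bul/\Lambda_n$ for some subgroup $\Lambda_n$ of $G$ satisfying $\Lambda_n \cap B_{\grp^\bul}(2k+1) = \{0\}$. Let $\pi_n : G \to G/\Lambda_n$ denote the canonical surjection. Since $\pi_n$ maps the marked generators of $\grp^\bul$ onto those of $\grp^\bul_n$, it is a surjective graph homomorphism from $\mathsf{Cay}(\grp^\bul)$ to $\mathsf{Cay}(\grp^\bul_n)$ sending $0$ to $0$. In particular $\pi_n$ is $1$-Lipschitz, so $\pi_n(B_{\grp^\bul}(k)) \subset B_{\grp^\bul_n}(k)$; conversely any vertex in $B_{\grp^\bul_n}(k)$ is the endpoint of a path of length at most $k$ from $0$ which lifts along $\pi_n$, so $\pi_n(B_{\grp^\bul}(k)) = B_{\grp^\bul_n}(k)$.

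The main (but straightforward) step is to check that the restriction $\pi_n|_{B_{\grp^\bul}(k)}$ is a rooted graph isomorphism onto $B_{\grp^\bul_n}(k)$. Injectivity follows from $\Lambda_n \cap B_{\grp^\bul}(2k) = \{0\}$: two points of $B_{\grp^\bul}(k)$ with the same image differ by an element of $\Lambda_n$ lying in $B_{\grp^\bul}(2k)$, hence are equal. Preservation of adjacency in the direction $\mathsf{Cay}(\grp^\bul) \to \mathsf{Cay}(\grp^\bul_n)$ is automatic as $\pi_n$ is a graph homomorphism. For the converse, if $a,b \in B_{\grp^\bul}(k)$ satisfy $\pi_n(a) = \pi_n(b) \pm s_i$ in $\mathsf{Cay}(\grp^\bul_n)$, then $a - b \mp s_i \in \Lambda_n \cap B_{\grp^\bul}(2k+1) = \{0\}$, so $a$ and $b$ are already neighbours in $\mathsf{Cay}(\grp^\bul)$. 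Hence $\pi_n$ restricts to the desired rooted graph isomorphism.

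Consequently, for every $k$, the distance between $\mathsf{Cay}(\grp^\bul_n)$ and $\mathsf{Cay}(\grp^\bul)$ in $\Graph$ is at most $2^{-k}$ for $n$ large enough, which proves the continuity of $\mathsf{Cay}$. The only place where any real content enters is the injectivity step, and it is handed to us directly by Proposition~\ref{prop:conv-seq}; the rest of the argument is essentially bookkeeping about the quotient map.
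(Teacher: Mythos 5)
Your proof is correct and follows exactly the route the paper intends: the paper states the corollary without proof as an immediate consequence of Proposition~\ref{prop:conv-seq}, and your argument (apply the proposition with radius $2k+1$, then check that the quotient map restricts to a rooted isomorphism between balls of radius $k$) is precisely the missing deduction, with the injectivity and adjacency-reflection steps handled correctly.
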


\subsection{Percolation on marked abelian groups}
\label{sec:perc-mark-abel}
Via its Cayley graph, we can associate to each marked abelian group
$\grp^\bul$ a critical parameter
$\mathrm{p_c}^{\!\!\!\bul}(\grp^\bul):=\mathrm{p_c}(\mathsf{Cay}(\grp^\bul))$
for bond percolation. If $\grp^\bul$ is a marked abelian group, then
$\mathrm{p_c}^{\!\!\!\bul}(\grp^\bul)<1$ if and only if the rank of
$\grp$ is at least~$2$. (We commit the abuse of language of calling
\emph{rank} of an abelian group the rank of its torsion-free part.)
This motivates the following definition:
\begin{equation}
  \tilde\Grp=\left\{G^\bul\in \Grp \::
    \: \mathsf{rank}(G)\ge 2 \right\}.
\end{equation}
In the context of marked abelian groups, we will prove the following
theorem:
\begin{thm}\label{continuitythmone}
  Consider $\grp_n^{\bul} \longrightarrow \grp^{\bul}$ a converging
  sequence in $\tilde\Grp$. Then,
  \begin{equation}
    \displaystyle \mathrm{p_c}^{\!\!\!\bul}(\grp_n^{\bul}) \xrightarrow[n\to
    \infty]{} \mathrm{p_c}^{\!\!\!\bul}(\grp^\bul).
\end{equation} 
\end{thm}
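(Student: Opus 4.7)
By Proposition~\ref{prop:conv-seq}, for all $n$ large enough we may write $G_n^\bullet=G^\bullet/\Lambda_n$ with $\Lambda_n\leq G$ satisfying $\Lambda_n\cap B_{G^\bullet}(k_n)=\{0\}$ and $k_n\to\infty$. The quotient map $\pi\colon\mathsf{Cay}(G^\bullet)\to\mathsf{Cay}(G_n^\bullet)$ is then a graph covering, and it identifies the balls of radius $\lfloor k_n/2\rfloor$ in the two Cayley graphs. The plan is to prove the two inequalities
\begin{equation}
\liminf_n \mathrm{p_c}^{\!\!\!\bullet}(G_n^\bullet)\;\geq\; \mathrm{p_c}^{\!\!\!\bullet}(G^\bullet)\quad\text{and}\quad \limsup_n \mathrm{p_c}^{\!\!\!\bullet}(G_n^\bullet)\;\leq\; \mathrm{p_c}^{\!\!\!\bullet}(G^\bullet)
\end{equation}
separately.

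The first inequality is the elementary direction: it reflects the general principle that a graph cover has $p_c$ no larger than its base, and I expect it to follow from a short argument combining the locality of the connection events $\{0\leftrightarrow\partial B_{G^\bullet}(\ell)\}$ (whose probabilities coincide on $G^\bullet$ and $G_n^\bullet$ as soon as $k_n>2\ell$) with uniqueness of the infinite cluster and its positive density in the supercritical regime.

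The second inequality is the heart of the theorem and is where the renormalization machinery of Sections~\ref{supercritical} and \ref{sec:renormalization} enters. Fix $p>\mathrm{p_c}^{\!\!\!\bullet}(G^\bullet)$ and $\delta>0$; the goal is $p+\delta>\mathrm{p_c}^{\!\!\!\bullet}(G_n^\bullet)$ for all $n$ large. I would proceed via a finite-size criterion, in the spirit of Grimmett and Marstrand: first, isolate a local event $A$ supported on edges inside $B_{G^\bullet}(R)$ for some $R=R(p,\delta)$, with the renormalization property that whenever $A$ occurs under Bernoulli$(p)$ with probability close to $1$ on a Cayley graph locally isomorphic to $G^\bullet$ at scale $R$, Bernoulli$(p+\delta)$ on that graph percolates; second, verify the criterion on $\mathsf{Cay}(G^\bullet)$ at parameter $p$, using $p>\mathrm{p_c}^{\!\!\!\bullet}(G^\bullet)$ together with uniqueness of the infinite cluster and Lemma~\ref{lem:path} to build block-to-block open connections; third, transfer to $G_n^\bullet$ by taking $n$ so large that $k_n>2R$, whereupon $A$ has the same probability under Bernoulli$(p)$ on the two graphs because its support lies in a ball canonically identified between them; finally, apply the criterion on $G_n^\bullet$ to conclude supercriticality of Bernoulli$(p+\delta)$, and let $\delta\downarrow 0$.

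The principal obstacle is carrying out the first two steps of the renormalization in the absence of any reflection symmetry of the Cayley graph. Grimmett and Marstrand's original block argument builds ``seeds''---small boxes densely touched by the infinite cluster on prescribed faces---and glues them by reflecting, both of which rely on lattice symmetries usually absent on a general abelian Cayley graph. The authors' substitute, flagged in the introduction and embodied in Lemma~\ref{lem:path}, is to combine uniqueness of the infinite cluster with a direct path-building lemma producing the required block-to-block connections without any seed. Turning this into a renormalization scheme that works uniformly in $n$, and exploiting the rank $\geq 2$ hypothesis to ensure the renormalized block lattice supports supercritical Bernoulli percolation, is precisely the content of the two lemmas of Sections~\ref{supercritical} and \ref{sec:renormalization}.
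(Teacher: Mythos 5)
Your proposal follows the paper's proof essentially verbatim: for $n$ large one writes $\grp_n^\bul=\grp^\bul/\Lambda_n$ via Proposition~\ref{prop:conv-seq}, the inequality $\liminf_n \mathrm{p_c}^{\!\!\!\bul}(\grp_n^\bul)\ge \mathrm{p_c}^{\!\!\!\bul}(\grp^\bul)$ is the covering inequality (Theorem~\ref{quotient}), and the upper bound is exactly the combination of Lemma~\ref{lem:finite-criterion} and Lemma~\ref{lem:renormalization}, with the correct identification of Lemma~\ref{lem:path} plus uniqueness of the infinite cluster as the substitute for Grimmett--Marstrand seeds. One caveat on the easy direction: you should simply invoke Theorem~\ref{quotient}, whose proof is a lifting/coupling argument along the covering map; the mechanism you sketch --- equality of the probabilities of $\{0\leftrightarrow\partial B(\ell)\}$ on the two graphs for $\ell$ below the injectivity radius, combined with uniqueness and density of the infinite cluster --- does not by itself give $\mathrm{p_c}^{\!\!\!\bul}(\grp_n^\bul)\ge \mathrm{p_c}^{\!\!\!\bul}(\grp^\bul)$ for a fixed $n$, since subcriticality on $\grp^\bul$ is not detectable at the finite scale $k_n$ without a sharpness-type input.
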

 
Theorem~\ref{continuitythmone} above states that
$\mathrm{p_c}^{\!\!\!\bul}$ is continuous on $\tilde\Grp$. It seems a
priori weaker than Theorem~\ref{continuitythmtwo}. Nevertheless, the
following lemma allows us to deduce Theorem~\ref{continuitythmtwo}
from Theorem~\ref{continuitythmone}.
\begin{lemma}
  \label{lem:reduce} Let $G^\bul$ be an element of $\tilde\Grp$.
Assume it is a continuity point of the restricted function
\begin{equation}
  \mathrm{p_c}^{\!\!\!\bul}:\:\tilde{\Grp}\longrightarrow (0,1).
\end{equation} Then its associated Cayley graph
$\mathsf{Cay}(G^\bul)$ is a continuity point of the restricted
function
\begin{equation}
    \mathrm{p_c}:\mathsf{Cay}(\tilde\Grp)\longrightarrow (0,1).
  \end{equation}
\end{lemma}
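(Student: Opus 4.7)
My plan is to reduce the continuity of $\mathrm{p_c}$ at $\mathsf{Cay}(G^\bul)$ to the assumed continuity of $\mathrm{p_c}^{\!\!\!\bul}$ at $G^\bul$. Write $G^\bul=[G;s_1,\dots,s_d]$ and let $(\mathcal{G}_n)$ be a sequence in $\mathsf{Cay}(\tilde\Grp)$ converging to $\mathcal{G}:=\mathsf{Cay}(G^\bul)$ in the Benjamini--Schramm topology. For each $n$, fix an abelian group $H_n$ of rank at least $2$ such that $\mathcal{G}_n$ is a Cayley graph of $H_n$. The goal is to produce, for $n$ large, a marking $\tilde G_n^\bul = [H_n; t_1^{(n)},\dots,t_d^{(n)}] \in \tilde\Grp$ such that (i) $\mathsf{Cay}(\tilde G_n^\bul) = \mathcal{G}_n$ and (ii) $\tilde G_n^\bul \to G^\bul$ in $\Grp$. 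Granting these, the continuity hypothesis gives $\mathrm{p_c}(\mathcal{G}_n) = \mathrm{p_c}^{\!\!\!\bul}(\tilde G_n^\bul) \to \mathrm{p_c}^{\!\!\!\bul}(G^\bul) = \mathrm{p_c}(\mathcal{G})$, which is the desired conclusion.

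To build $\tilde G_n^\bul$ I exploit the Benjamini--Schramm convergence: for every $k$ and every $n$ large enough, there is a rooted graph isomorphism $\phi_n : B_{\mathcal{G}}(k) \to B_{\mathcal{G}_n}(k)$, and I set $t_i^{(n)} := \phi_n(s_i)\in H_n$. Property~(i) reduces to $\{\pm t_i^{(n)}\}_i = \phi_n(\{\pm s_i\}_i)$, which holds exactly when $\phi_n$ intertwines the canonical inversions $g \mapsto -g$ on the unit spheres. Since inversion is a graph automorphism of any abelian Cayley graph and the combinatorics of its restriction to the unit sphere is read off from $B(2)$, hence coincides for $\mathcal{G}$ and $\mathcal{G}_n$ when $n$ is large, I can post-compose $\phi_n$ by a suitable rooted automorphism of $\mathcal{G}_n$ to make its restriction to $\{\pm s_i\}_i$ inversion-equivariant. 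For property~(ii) I use Proposition~\ref{prop:conv-seq}: identifying $G^\bul = [\Z^d;\mathbf{\delta}]/\Gamma$ and $\tilde G_n^\bul = [\Z^d;\mathbf{\delta}]/\Gamma_n$, I show that for every $w=(a_1,\dots,a_d) \in \Z^d$ of bounded $\ell^1$-norm and every $n$ sufficiently large, $\sum a_i s_i = 0$ in $G$ if and only if $\sum a_i t_i^{(n)} = 0$ in $H_n$. Each condition is read as the closedness of a walk of length $\|w\|_1$ based at $0$, and the walk in $\mathcal{G}$ is transported through $\phi_n$ to a walk in $\mathcal{G}_n$ (and vice versa through $\phi_n^{-1}$), so that $\Gamma_n \cap F = \Gamma \cap F$ on every finite $F \subset \Z^d$, which is precisely $\tilde G_n^\bul \to G^\bul$ in $\Grp$.

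The main obstacle is this last transport of closed walks: a graph isomorphism is not automatically \emph{label-preserving}, so the image under $\phi_n$ of the step $g \to g + s_i$ inside the walk need not be the step $\phi_n(g) \to \phi_n(g) + t_i^{(n)}$, but could use any element of $N_n$. I would overcome this by sharpening the construction of $\phi_n$ to a label-preserving isomorphism on $B_\mathcal{G}(k)$, choosing it inductively on $r=1,2,\dots,k$ and using at each step the rigidity of the local automorphism group of an abelian Cayley graph (which permutes the $s_i$ together with signs, in a way already pinned down on the unit sphere after the inversion-equivariance adjustment above). Once $\phi_n$ is label-preserving on $B_\mathcal{G}(k)$, closed walks in $\mathcal{G}$ based at $0$ of length at most $k$ are in bijection with closed walks in $\mathcal{G}_n$ based at $0$ of length at most $k$, preserving the sequence of generator labels, and the equivalence of relations between $\Gamma$ and $\Gamma_n$ on $B_{\Z^d}(k)$ is then immediate. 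Combining (i) and (ii) completes the reduction.
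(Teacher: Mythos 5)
Your strategy is to lift the Benjamini--Schramm convergence $\mathcal G_n\to\mathsf{Cay}(G^\bul)$ to a convergence of marked groups $\tilde G_n^\bul\to G^\bul$ in $\Grp$, by choosing markings $t_i^{(n)}=\phi_n(s_i)$ and then upgrading the rooted ball isomorphisms $\phi_n$ to \emph{label-preserving} ones. This upgrade is the crux of your argument, and it is not established: a rooted graph isomorphism between balls of two abelian Cayley graphs carries no a priori compatibility with the group operations, and your two correction steps (post-composing by a rooted automorphism to make $\phi_n$ intertwine the inversions on the unit spheres, and an induction on the radius invoking ``rigidity of the local automorphism group'') are assertions, not proofs. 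The automorphism group of a ball in an abelian Cayley graph need not reduce to signed permutations of the generators, the inversion involution is not visibly determined by the graph structure of $B(2)$, and it is not clear that the required conjugating automorphism of $\mathcal G_n$ exists. More fundamentally, $\mathsf{Cay}$ is not injective (e.g.\ $[\Z^2;\delta_1,\delta_2,\delta_1+\delta_2]$ and $[\Z^2;\delta_1,\delta_2,\delta_1-\delta_2]$ yield isomorphic graphs but are distinct points of $\Grp_3$), so any subsequential limit of markings of the $\mathcal G_n$'s is only guaranteed to have the \emph{same Cayley graph} as $G^\bul$, not to \emph{be} $G^\bul$; forcing convergence to $G^\bul$ itself is exactly the hard point, and your proposal does not close it.

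The paper's proof avoids this lifting problem entirely by a soft compactness argument: arguing by contradiction, it notes that for $n$ large all the $\grp_n^\bul$ lie in the compact set $\bigcup_{d'\le d}\Grp_{d'}$ (where $d$ is the degree of $\mathsf{Cay}(\grp^\bul)$), extracts a subsequence converging to some $\grp_\infty^\bul\in\tilde\Grp$, and uses the continuity of $\mathsf{Cay}$ (Corollary~\ref{continuity}) to conclude that $\mathsf{Cay}(\grp_\infty^\bul)=\mathsf{Cay}(\grp^\bul)$; continuity of $\mathrm{p_c}^{\!\!\!\bul}$ at $\grp_\infty^\bul$ (supplied by Theorem~\ref{continuitythmone}, which is how the lemma is used) then contradicts the assumption that $\mathrm{p_c}^{\!\!\!\bul}(\grp_n^\bul)$ stays away from $\mathrm{p_c}^{\!\!\!\bul}(\grp^\bul)=\mathrm{p_c}^{\!\!\!\bul}(\grp_\infty^\bul)$. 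Note that this argument deliberately accepts that the subsequential limit may differ from $G^\bul$ and therefore pays for it by using continuity at $\grp_\infty^\bul$ rather than only at $G^\bul$. If you wish to keep your route, you would need to actually prove the local rigidity statement (that rooted ball isomorphisms between abelian Cayley graphs can be chosen compatibly with some choice of generating tuples), which is a substantial and independent claim; otherwise, the compactness argument is the efficient way through.
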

\begin{proof} Assume, by contradiction, that there exists a sequence
  of marked abelian groups $\grp_n^\bul$ in $\tilde\Grp$ such that
  $\mathsf{Cay}(\grp_n^\bul)$ converges to some
  $\mathsf{Cay}(\grp^\bul)$ and
  $\mathrm{p_c}^{\!\!\!\bul}(\grp_n^\bul)$ stays away from
  $\mathrm{p_c}^{\!\!\!\bul}(\grp^\bul)$. Define $d$ to be the degree
  of $\mathsf{Cay}(\grp^\bul)$. Considering $n$ large enough, we can
  assume that all the $G_n^\bul$'s lie in the compact set
  $\bigcup_{d'\le d}\Grp_{d'}$. Up to extraction, one can then assume
  that $\grp_n^\bul$ converges to some marked abelian group
  $\grp_\infty^\bul$. This group must have rank at least 2. Since
  $\mathsf{Cay}$ is continuous,
  $\mathsf{Cay}(\grp^\bul)=\mathsf{Cay}(\grp_\infty^\bul)$ and
  Theorem~\ref{continuitythmone} is contradicted by the sequence
  $(\grp_n^\bul)$ that converges to $\grp_\infty^\bul$.
\end{proof} We will also use the following theorem, which is a
particular case of theorem 3.1 in \cite{bs1}.
\begin{thm}
  \label{quotient} Let $G^\bul$ be a marked abelian group and
  $\Lambda$ a subgroup of $G$. Then
$$\mathrm{p_c}^{\!\!\!\bul}(G^\bul/\Lambda) \geq \mathrm{p_c}^{\!\!\!\bul}(G^\bul).$$
\end{thm}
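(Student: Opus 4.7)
The plan is to view Theorem~\ref{quotient} as an instance of the general monotonicity of $\mathrm{p_c}$ under graph coverings. The quotient map $G\to G/\Lambda$ induces a surjection $\pi:\mathsf{Cay}(G^\bul)\to\mathsf{Cay}(G^\bul/\Lambda)$; because $\Lambda$ acts freely by graph automorphisms on $\mathsf{Cay}(G^\bul)$ and the Cayley graph of the quotient is exactly the orbit graph, $\pi$ is a graph covering in the strong sense that for every vertex $v\in G$ and every edge $\bar e$ of $\mathsf{Cay}(G^\bul/\Lambda)$ incident to $\pi(v)$, there is a unique edge of $\mathsf{Cay}(G^\bul)$ that is incident to $v$ and projects to $\bar e$. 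It therefore suffices to establish the following: for every $p$, if Bernoulli$(p)$ bond percolation has an infinite open cluster almost surely on $\mathsf{Cay}(G^\bul/\Lambda)$, then so does Bernoulli$(p)$ bond percolation on $\mathsf{Cay}(G^\bul)$.

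The natural implementation is a coupling by simultaneous exploration. Starting from $\bar 0\in G/\Lambda$ and $0\in G$, I would run a breadth-first search of the open cluster $\bar C_{\bar 0}$ in $\mathsf{Cay}(G^\bul/\Lambda)$ while maintaining a partial lift $\widetilde V\subset G$ containing $0$ and mapped bijectively by $\pi$ onto the already-explored vertices of $\bar C_{\bar 0}$. Whenever the search is about to query an edge $\bar e$ incident to a frontier vertex $\bar u$, one queries instead the unique edge of $\mathsf{Cay}(G^\bul)$ incident to the corresponding lift $u\in\widetilde V\cap\pi^{-1}(\bar u)$ that projects onto $\bar e$, and both edges are declared open or closed according to a common Bernoulli$(p)$ variable. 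Provided each edge of $\mathsf{Cay}(G^\bul)$ is queried at most once, the collection of answers yields a genuine i.i.d.\ Bernoulli$(p)$ percolation on $\mathsf{Cay}(G^\bul)$, and by construction the open cluster of $0$ surjects under $\pi$ onto $\bar C_{\bar 0}$; in particular it is infinite as soon as $\bar C_{\bar 0}$ is.

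The main obstacle is precisely this \emph{no edge revealed twice} clause. When the projected BFS rediscovers a previously explored vertex $\bar u$ through a new edge $\bar e$, the exploration in the cover does not identify the two corresponding lifts: it branches strictly beyond $\bar C_{\bar 0}$. One must then arrange which lift of $\bar e$ to consume in $\mathsf{Cay}(G^\bul)$ without ever asking twice about the same edge, while still preserving the bijection between $\widetilde V$ and the discovered portion of $\bar C_{\bar 0}$. This is exactly the bookkeeping performed in the proof of Theorem~3.1 of \cite{bs1}, which I would invoke directly rather than reproduce. With that step in hand, one obtains the pointwise inequality $\Pp{0\lr\infty}\ge\Pp{\bar 0\lr\infty}$ between the cluster-probabilities in $\mathsf{Cay}(G^\bul)$ and $\mathsf{Cay}(G^\bul/\Lambda)$ at every $p$, from which $\mathrm{p_c}^{\!\!\!\bul}(G^\bul)\le\mathrm{p_c}^{\!\!\!\bul}(G^\bul/\Lambda)$ follows immediately.
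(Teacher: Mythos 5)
Your proposal is correct and is essentially what the paper does: the paper gives no independent proof of this statement but simply observes that it is a particular case of Theorem~3.1 of \cite{bs1}, the general monotonicity of $\mathrm{p_c}$ under quotients by a group of automorphisms, which is exactly the result you invoke. Your sketch of the exploration coupling behind that theorem (and of the edge-revealed-twice subtlety) is a faithful account of the argument you are citing, so nothing is missing.
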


\subsection{A first continuity result}
\label{specialcase} In this section, we will prove
Proposition~\ref{thm:first-cont-result}, which is a particular case of
Theorem~\ref{continuitythmtwo}. We deem interesting to provide a short
independent proof of it. This proposition epitomizes the scope of
Grimmett-Marstrand results in our context. It also illustrates how
marked groups can appear as useful tools to deal with locality
questions. More precisely, Lemma~\ref{lem:reduce} reduces some
questions of continuity in the Benj\-amini-Schramm space to equivalent
questions in the space of marked abelian groups, where the topology
allows to employ methods of algebraic nature.

\begin{prop}
\label{thm:first-cont-result}
Let $(\grp_n^\bullet)$ be a sequence in $\tilde\Grp$. Assume that
$\grp_n^\bullet\xrightarrow[n\to\infty]{}[\Z^d;\mathbf{\delta}]$,
where $\mathbf{\delta}$ stands for the canonical generating system of
$\Z^d$. Then
\begin{equation}
  \mathrm{p_c}^{\!\!\!\bul}(\grp_n^\bullet) \xrightarrow[n\to
  \infty]{}\mathrm{p_c}^{\!\!\!\bul}([\Z^d;\mathbf{\delta}]).
\end{equation}
\end{prop}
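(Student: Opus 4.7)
The plan is to sandwich the sequence $\mathrm{p_c}^{\!\!\!\bul}(\grp_n^\bul)$ by establishing both a $\liminf$ and a $\limsup$ bound, which together yield the desired convergence. The lower bound $\liminf_n \mathrm{p_c}^{\!\!\!\bul}(\grp_n^\bul) \geq \mathrm{p_c}^{\!\!\!\bul}([\Z^d;\mathbf{\delta}])$ is immediate from the tools already assembled: Proposition~\ref{prop:conv-seq} ensures that, for $n$ large, $\grp_n^\bul = [\Z^d;\mathbf{\delta}]/\Lambda_n$ with $\Lambda_n \cap B_{\Z^d}(R) = \{0\}$ for every fixed $R$, and Theorem~\ref{quotient} then gives $\mathrm{p_c}^{\!\!\!\bul}(\grp_n^\bul) \geq \mathrm{p_c}^{\!\!\!\bul}([\Z^d;\mathbf{\delta}])$.

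\textbf{Upper bound via Grimmett-Marstrand.} Fix $p > \mathrm{p_c}^{\!\!\!\bul}([\Z^d;\mathbf{\delta}])$; I would show $\mathrm{p_c}^{\!\!\!\bul}(\grp_n^\bul) \leq p$ for all $n$ large. By the cylindrical consequence~\eqref{eq:1} of Grimmett-Marstrand, there exists $L$ such that $\mathrm{p_c}(\Z^2 \times (\Z/L\Z)^{d-2}) < p$; the latter cylinder is the Cayley graph of $\mathcal{C}^\bul_L := [\Z^d;\mathbf{\delta}]/\Lambda^L$ with $\Lambda^L := L\Z e_3 + \cdots + L\Z e_d$. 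If $\mathcal{C}^\bul_L$ were a marked quotient of $\grp_n^\bul$ for $n$ large, Theorem~\ref{quotient} would give $\mathrm{p_c}^{\!\!\!\bul}(\grp_n^\bul) \leq \mathrm{p_c}^{\!\!\!\bul}(\mathcal{C}^\bul_L) < p$ and the proof would be done.

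\textbf{Main obstacle and workaround.} Such a marked quotient exists precisely when $\Lambda_n \subset \Lambda^L$, which typically fails because the sparse sublattice $\Lambda_n$ need not align with the coordinate directions $e_3, \ldots, e_d$. To adapt the cylinder to $\Lambda_n$, I would use the structure theorem for $\Z^d$: choose a $\Z$-basis $(v_1, \ldots, v_d)$ such that $\Lambda_n = \bigoplus_{i=1}^{d-r_n} a_i \Z v_i$, where $r_n = \mathrm{rank}(\grp_n^\bul) \geq 2$, and set $\Lambda_n' := \Lambda_n + L\Z v_{d-r_n+1} + \cdots + L\Z v_{d-2}$. Then $\Lambda_n \subset \Lambda_n'$, the quotient $\Z^d/\Lambda_n'$ is abstractly $\Z^2 \oplus T_n$ for some finite group $T_n$, and Theorem~\ref{quotient} yields $\mathrm{p_c}^{\!\!\!\bul}(\grp_n^\bul) \leq \mathrm{p_c}^{\!\!\!\bul}([\Z^d;\mathbf{\delta}]/\Lambda_n')$. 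The hard step, which I expect to be the main difficulty, is controlling this last quantity: while the underlying group is cylinder-like, the marking is by $(\bar e_i)$ rather than by generators canonically adapted to the cylinder structure, so Grimmett-Marstrand does not apply directly. The route I would try is to invoke the reflection-invariant extension of Grimmett-Marstrand mentioned in the remark after the theorem, leveraging the fact that $\Lambda_n \cap B_{\Z^d}(R) = \{0\}$ for $R \to \infty$ to argue that the induced edge structure becomes effectively translation- and reflection-symmetric at the relevant scales.
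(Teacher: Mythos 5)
Your lower bound is exactly the paper's: for large $n$ the group $\grp_n^\bul$ is a quotient of $[\Z^d;\mathbf{\delta}]$ and Theorem~\ref{quotient} gives $\liminf \mathrm{p_c}^{\!\!\!\bul}(\grp_n^\bul)\geq \mathrm{p_c}^{\!\!\!\bul}([\Z^d;\mathbf{\delta}])$. The problem is the upper bound, where the step you yourself flag as ``the hard step'' is a genuine gap, and the workaround you sketch cannot close it. After passing to $\Lambda_n'$ you are left with a Cayley graph of $\Z^2\oplus T_n$ whose marking is the image of the canonical generators under a basis change adapted to $\Lambda_n$; generically this edge structure has no reflection symmetry, and there is no mechanism by which ``effective'' reflection symmetry emerges at large scales --- the multiset of projected generators in the free $\Z^2$-part is simply not invariant under any reflection. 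Bounding $\mathrm{p_c}$ of such non-symmetric abelian Cayley graphs is precisely the obstruction that the remark following this proposition points out and that Sections~\ref{supercritical} and \ref{sec:renormalization} of the paper are devoted to overcoming; your reduction therefore trades the proposition for a problem essentially as hard as Theorem~\ref{continuitythmone} itself. A secondary issue: the complementary basis vectors $v_{d-r_n+1},\dots,v_{d-2}$ are not controlled by the hypothesis $\Lambda_n\cap B_{\Z^d}(R)=\{0\}$, so the cylinder $\Z^d/\Lambda_n'$ can be geometrically degenerate (e.g.\ thin in a skew direction), and nothing forces its critical point below the given $p$.

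The paper's route avoids quotients entirely for the upper bound and uses subgraphs instead, which is where the canonical marking is exploited. The key step is Lemma~\ref{lem:slab}: for every $K$ and all large $n$ there is a \emph{coordinate} plane $\Pi$ with $\left(\Pi+B_{\Z^d}(0,2K+1)\right)\cap\Gamma_n=\{0\}$; the proof is a compactness/contradiction argument using that $\Gamma_n\to\{0\}$ pointwise (so any nonzero $v_n^\Pi$ has norm tending to infinity and its normalized limit lies in the plane of $\Pi$) together with the rank-$\geq 2$ hypothesis. Consequently the quotient map is injective on $\Pi+B_{\Z^d}(0,K)$, so the \emph{standard} slab $\Z^2\times\{-K,\dots,K\}^{d-2}$ embeds as a subgraph of $\mathsf{Cay}(\grp_n^\bul)$. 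Monotonicity of $\mathrm{p_c}$ under taking subgraphs then gives $\limsup \mathrm{p_c}^{\!\!\!\bul}(\grp_n^\bul)\leq \mathrm{p_c}\left(\Z^2\times\{-K,\dots,K\}^{d-2}\right)$, and since this slab carries the usual reflection-symmetric structure (this is where the hypothesis that the marking is the canonical $\mathbf{\delta}$ is used), the original Grimmett--Marstrand theorem sends the right-hand side to $\mathrm{p_c}(\Z^d)$ as $K\to\infty$. If you want to salvage your argument, you should replace the quotient step by this embedding step.
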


\begin{proof}
  Since $\Grp_d$ is open, we can assume that $\grp_n^\bul$ belongs to
  it. It is thus a quotient of $[\Z^d;\mathbf{\delta}]$, and
  Theorem~\ref{quotient} gives
$$
\liminf \mathrm{p_c}^{\!\!\!\bul}(\grp_n^\bul)\geq
\mathrm{p_c}^{\!\!\!\bul}([\Z^d;\mathbf{\delta}]).
$$
To establish the other semi-continuity, we will show that the Cayley
graph of $\grp_n^\bul$ eventually contains $\Z^2\times\{0,\ldots,K\}$
as a subgraph (for $K$ arbitrarily large), and conclude by applying
Grimmett-Marstrand theorem.
  
Let us denote $\Gamma_n$ the subgroup of $\Z^d$ associated to
$\grp_n^\bullet$ via bijection~\eqref{eq:3}. We call coordinate plane
a subgroup of $\Z^d$ generated by two different elements of the
canonical generating system of $\Z^d$.

\begin{lemma}
  \label{lem:slab} For any integer $K$, for $n$ large enough, there
  exists a coordinate plane $\Pi$ satisfying
  \begin{equation}
    \left(\Pi+ B_{\Z^d}(0,2K+1)\right)\cap\Gamma_n =\{0\}.
\end{equation}
\end{lemma}

\begin{small}
\begin{proof}[Proof of Lemma~\ref{lem:slab}]
  To establish Lemma~\ref{lem:slab}, we proceed by contradiction. Up
  to extraction, we can assume that there exists some $K$ such that
\begin{equation}
  \text{for all $\Pi$,} \quad  (\Pi +  B_{\Z^d}(0,2K+1))\cap\Gamma_n \not=\{0\}.\label{eq:4}
\end{equation}
We denote by $v_n^\Pi$ a non-zero element of $(\Pi +
B_{\Z^d}(0,2K+1))\cap\Gamma_n $. Up to extraction, we can assume that,
for all $\Pi$, the sequence $v^\Pi_n/{\|v^\Pi_{n}\|}$ converges to
some $v_{\Pi}$. (The vector space $\R^d$ is endowed with an arbitrary
norm $\|~\|$.) Since $\Gamma_n$ converges pointwise to $\{0\}$, for
any $\Pi$, the sequence $\|v_n^\Pi\|$ tends to infinity. This entails,
together with equation~\eqref{eq:4}, that $v_\Pi$ is contained in the
real plane spanned by $\Pi$. The incomplete basis theorem implies that
the vector space spanned by the $v_\Pi$'s has dimension at least
$d-1$. By continuity of the minors, for $n$ large enough, the vector
space spanned by $\Gamma_n$ as dimension at least $d-1$. This entails
that, for $n$ large enough, $\Gamma_n$ has rank at least $d-1$, which
contradicts the hypothesis that $\Z^d/\Gamma_n$ has rank at least 2.
\end{proof}
\end{small}

For any $K$, provided that $n$ is large enough, one can see
$\Z^2\times\{-K,\ldots,K\}^{d-2}$ as a subgraph of
$\mathsf{Cay}(\grp_n^\bul)$. (Restrict the quotient map from $\Z^d$ to
$\grp_n^\bul$ to the $\Pi + B_{\Z^d}(0,K)$ given by
Lemma~\ref{lem:slab} and notice that it becomes injective.) It results
from this that $$\limsup \mathrm{p_c}^{\!\!\!\bul}(\grp_n^\bul)\leq
\mathrm{p_c}(\Z^2\times\{-K,\ldots,K\}^{d-2}).$$The right-hand side
goes to $ \mathrm{p_c}^{\!\!\!\bul}([\Z^d;\mathbf{\delta}])$ as $K$
goes to infinity, by Grimmett-Marstrand theorem. This establishes the
second semi-continuity.
\end{proof}

\begin{rem}
  Proposition~\ref{thm:first-cont-result} states exactly what
  Grimmett-Marstrand theorem implies in our setting. Together with
  Lemma~\ref{lem:reduce}, it entails that the hypercubic lattice is a
  continuity point of $\mathrm{p_c}$ on $\mathsf{Cay}(\tilde\Grp)$.
  Without additional idea, one could go a bit further: the proof of
  Grimmett and Marstrand adjusts directly to the case of Cayley graphs
  of $\Z^d$ that are stable under reflections relative to coordinate
  hyperplanes. This statement also has a counterpart analog to
  Proposition~\ref{thm:first-cont-result}. Though, we are still far
  from Theorem~\ref{continuitythmone}, since Grimmett-Marstrand
  theorem relies heavily on the stability under reflection. In the
  rest of the paper, we solve the locality problem for general abelian
  Cayley graphs. We do so directly in the marked abelian group
  setting, and do not use a ``slab result'' analog to
  Grimmett-Marstrand theorem.
\end{rem}

\subsection{Proof of Theorem~\ref{continuitythmone}}

\label{sec:heuristics}

The purpose of this section is to reduce the proof of
Theorem~\ref{continuitythmone} to the proof of two lemmas
(Lemma~\ref{lem:finite-criterion} and
Lemma~\ref{lem:renormalization}). These are respectively established
in sections \ref{supercritical} and \ref{sec:renormalization}.

As in section~\ref{specialcase}, it is the upper semi-continuity of
$\mathrm{p_c}^{\!\!\!\bul}$ that is hard to establish: given $G^\bul$
and $p>\mathrm{p_c}^{\!\!\!\bul}(G^\bul)$, we need to show that the
parameter $p$ remains supercritical for any element of $\tilde\Grp$
that is close enough to $G^\bul$. To do so, we will characterize
supercriticality by using \emph{a finite-size criterion}, that is a
property of the type ``$\Pp{\mathcal{E}_N} > 1-\eta$'' for some event
$\mathcal{E}_N$ that depends only on the states of the edges in the
ball of radius $N$. The finite-size criterion we use is denoted by
$\mathscr{FC}(p,N,\eta)$ and characterizes supercriticality through
lemmas~\ref{lem:finite-criterion} and \ref{lem:renormalization}. Its
definition involving heavy notation, we postpone it to
section~\ref{sec:stab-lemma-under}.

First, we work with a fixed marked abelian group $G^\bul$. Assuming
that $p>\mathrm{p_c}^{\!\!\!\bul}(G^\bul)$, we construct in its Cayley
graph a box that exhibits nice connection properties with high
probability. This is formalized by Lemma~\ref{lem:finite-criterion}
below, which will be proved in section~\ref{supercritical}.

\begin{lemma}
  \label{lem:finite-criterion}
  Let $G^\bul\in\tilde\Grp$. Let $p>\mathrm{p_c}^{\!\!\!\bul}(G^\bul)$
  and $\eta>0$. Then, there exists $N$ such that $G^\bul$ satisfies
  the finite-size criterion $\mathscr{FC}(p,N,\eta)$.
\end{lemma}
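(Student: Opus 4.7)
The plan is to extract from the existence of an infinite cluster at level $p$ a local event of high probability that depends only on the edges in a sufficiently large ball. The main input is that the Cayley graph of any abelian group is amenable, so the Burton--Keane argument yields the almost-sure uniqueness of the infinite cluster at any parameter $p>\mathrm{p_c}^{\!\!\!\bul}(G^\bullet)$. Consequently the probability that a large ball $B_{G^\bullet}(R)$ meets the infinite cluster tends to $1$ as $R\to\infty$.

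First I would choose a radius $R$ so large that, with probability at least $1-\eta/2$, each of a prescribed finite collection of ``faces'' of $B_{G^\bullet}(R)$ (for instance the translates of a smaller ball by $\pm M s_i$ for $i=1,\dots,d$ and some auxiliary integer $M$) contains a vertex of the infinite cluster. By uniqueness, all such ``face vertices'' lie in the same infinite cluster and are therefore pairwise connected in $\mathsf{Cay}(G^\bullet)$. The second step is to truncate: each of the finitely many infinite paths witnessing these connections may be replaced, up to a further failure of probability $\eta/2$, by a path contained in a larger ball $B_{G^\bullet}(N)$ for some $N\gg R$. Combining both estimates, one obtains an event of probability at least $1-\eta$ depending only on edges in $B_{G^\bullet}(N)$, which is the form of statement that the finite-size criterion $\mathscr{FC}(p,N,\eta)$ will require, once it is formally defined in section~\ref{sec:stab-lemma-under}.

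The principal obstacle lies in picking the ``faces'' so that (i) their mutual connectedness is genuinely witnessed by supercriticality plus uniqueness, and (ii) the resulting event is strong enough to drive the renormalization block argument of section~\ref{sec:renormalization}. In the symmetric Grimmett--Marstrand setting the analog step is the construction of ``seeds''; here that step is to be replaced by a direct use of uniqueness, packaged through Lemma~\ref{lem:path}, which provides finite-range connecting paths between faces without invoking any reflection symmetry. The precise matching of the high-probability event produced above with the formal definition of $\mathscr{FC}$ is the point that will have to be carried out with care in section~\ref{supercritical}.
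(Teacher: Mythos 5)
There is a genuine gap: your argument stops exactly where the lemma begins. The criterion $\mathscr{FC}(p,N,\eta)$ does not merely ask for ``some high-probability event measurable in $B(N)$''; it asks, \emph{for every orthonormal basis} $\mathbf e\in\mathfrak B$, for a good quadruple $(a,b,u,v)$ such that every $\gamma\in\mathcal S(m)$ is connected to each of the four zones $L(a,u),L(u,b),L(b,v),L(v,-a)$ \emph{inside the thin fattened parallelogram} $R_{N,\mathbf e}(a,b)$. Supercriticality, uniqueness and truncation give you what you describe: the faces all meet the (unique) infinite cluster and are hence pairwise connected somewhere in a huge ball. But confining those connections to $R(a,b)$, and splitting the resulting probability among four zones that are \emph{not} translates or reflections of one another, is the entire content of the lemma. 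Your choice of faces as translates of a ball by $\pm M s_i$ implicitly assumes that each face is reached with comparable probability, which is exactly the reflection symmetry that abelian Cayley graphs lack and that the paper is at pains to avoid. The paper's substitute is the balancing construction of Lemma~\ref{lem:geometric}: one tunes the aspect ratio of the box via the quantity $\ell_{\mathrm{eq}}(h)$ so that the ``left-right'' and ``up-down'' exit probabilities cross, and then applies the asymmetric square-root trick (Lemma~\ref{lemFKG}) together with Lemma~\ref{lem:connections} to carve out the four zones one by one, each reached within $R(a,b)$ with probability $1-\varepsilon$. None of this is replaced by anything in your proposal; you explicitly defer it as ``the point that will have to be carried out with care.''

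A second, smaller omission: the criterion must hold with a \emph{single} $N$ uniformly over all orthonormal bases $\mathbf e$, because the basis actually used in the renormalization of section~\ref{sec:renormalization} depends on the subgroup $\Lambda$ of the approximating quotient, which is not known in advance. The paper obtains this uniformity by showing that the estimate for a basis $\mathbf e$ persists (with sets fattened by $B(1)$) on a neighbourhood $\mathcal N(\mathbf e,N_{\mathbf e})$ of $\mathfrak B$, and then extracting a finite subcover of the compact set $\mathfrak B$. Your proposal works with a single implicit coordinate system and does not address this quantifier at all. On the other hand, your identification of the roles of uniqueness and of Lemma~\ref{lem:path} (long self-avoiding paths as substitutes for seeds) is correct and matches the paper's use of the ``uniqueness zone'' $B(k)\lr[!B(n+1)!]B(n)^c$ in Lemma~\ref{lemgeom1}; what is missing is the geometric heart of the proof.
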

Then, take $H^{\bul}=G^\bul/\Lambda$ a marked abelian group that is
close to $G^\bul$. Since $\mathsf{Cay}(G^\bul)$ and
$\mathsf{Cay}(H^\bul)$ have the same balls of large radius, the finite
criterion is also satisfied by $H^\bul$. This enables us to prove that
there is also percolation in $\mathsf{Cay}(H^\bul)$. As in Grimmett
and Marstrand's approach, we will not be able to prove that
percolation occurs in $\mathsf{Cay}(H^\bul)$ for the same parameter
$p$, but we will have to slightly increase the parameter. Here comes a
precise statement, established in section~\ref{sec:renormalization}.

\begin{lemma}
  \label{lem:renormalization}
  Let $G^\bul \in \tilde\Grp$. Let
  $p>\mathrm{p_c}^{\!\!\!\bul}(G^\bul)$ and $\delta>0$. Then there
  exists $\eta>0$ such that the following holds: if there exists $N$ such
  that $G^\bul$ satisfies the finite-size criterion $\mathscr{FC}(p,N,\eta)$,
  then $\mathrm{p_c}(H^\bul)<p+\delta$ for any marked abelian
  group  $H^\bul$  close enough to $G^\bul$.  
  \end{lemma}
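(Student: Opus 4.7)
The plan is to separate the argument into a local transfer step and a renormalization/sprinkling step. First, fix $\eta>0$ small (to be determined later by the renormalization), and suppose $G^\bul$ satisfies $\mathscr{FC}(p,N,\eta)$ for some $N$. By Proposition~\ref{prop:conv-seq}, any $H^\bul$ close enough to $G^\bul$ can be written as $H^\bul=G^\bul/\Lambda$ with $\Lambda\cap B_{G^\bul}(R)=\{0\}$ for any prescribed $R$. Choosing $R\gg N$, the canonical projection $G\to G/\Lambda$ identifies $B_{G^\bul}(N)$ with $B_{H^\bul}(N)$ as rooted graphs. Since $\mathscr{FC}(p,N,\eta)$ is defined through events depending only on the edges inside that ball, $H^\bul$ automatically inherits $\mathscr{FC}(p,N,\eta)$ at the same parameter. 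So the real content of the lemma concerns $H^\bul$ itself, once the criterion is known to hold.

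Next I would set up a block renormalization on $\mathsf{Cay}(H^\bul)$. Tile the group $H$ by translates of the distinguished $N$-box appearing in the finite-size criterion, and call a translate \emph{good} when the associated local event is realised. By $\mathscr{FC}(p,N,\eta)$, each translate is good with probability at least $1-\eta$, and translates that are sufficiently far apart carry independent statuses. The abelian structure of $H$ is used crucially here: the translates are indexed by an honest group action, and the renormalized lattice is a quotient of~$\Z^d$ with a controlled geometry. For $\eta$ small enough, a Peierls-type argument on the renormalized lattice produces, with high probability, an infinite connected component of good boxes.

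The crux is to extract a genuine infinite open path from an infinite cluster of good boxes. The finite-size criterion provides large crossings within each good box, but crossings in neighbouring boxes are not automatically joined together. I would work at parameter $p+\delta$ and couple with $p$-percolation so that the extra $\delta$ of openness acts as a \emph{sprinkling} that bridges crossings of adjacent good boxes with probability tending to~$1$. Unlike Grimmett and Marstrand, who use reflections and the square-root trick to grow \emph{seeds} of infinite cluster in symmetric slabs, no such symmetry is available here; the idea (forecast in the introduction) is to invoke Lemma~\ref{lem:path}, together with uniqueness of the infinite cluster, to route an open path between neighbouring crossings without appealing to any reflection. This should allow an iterative construction: start from a good box, use sprinkling plus Lemma~\ref{lem:path} to hook onto the next good box along an infinite renormalized path, and conclude that percolation occurs at level $p+\delta$.

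The main obstacle is this gluing step, and it is where the quantification in $\eta$ has to be set. One needs the sprinkling to succeed in finitely many scales, uniformly over all $H^\bul$ in a neighbourhood of $G^\bul$, which forces the local event encoded in $\mathscr{FC}$ to be strong enough to feed Lemma~\ref{lem:path}. The transfer step from $G^\bul$ to $H^\bul$ (Step~1) and the Peierls domination of a supercritical site percolation (Step~2) are essentially soft once the algebraic machinery of marked abelian groups is in place; the weight of the novelty in the renormalization argument for general (non-reflection-symmetric) abelian Cayley graphs falls on the design of $\mathscr{FC}$ and on the path/sprinkling construction.
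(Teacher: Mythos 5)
Your overall shape (transfer the criterion to $H^\bul$, renormalize, sprinkle) matches the paper's strategy, but two of your steps would fail as stated and a third essential ingredient is missing. The main problem is the gluing mechanism. You propose to join crossings of adjacent good boxes by ``invoking Lemma~\ref{lem:path} together with uniqueness of the infinite cluster''. Both of those are statements about \emph{supercritical} percolation, and at this stage you only know $p>\mathrm{p_c}^{\!\!\!\bul}(G^\bul)$ --- you do \emph{not} know that $p$ (or even $p+\delta$) is supercritical on $\mathsf{Cay}(H^\bul)$; that is the conclusion you are trying to reach. In the paper, Lemma~\ref{lem:path} and uniqueness are consumed entirely upstream, inside $\mathsf{Cay}(G^\bul)$, to \emph{build} the finite-size criterion (Lemmas~\ref{lemgeom1} and~\ref{lem:finite-criterion}); the criterion itself quantifies over all self-avoiding paths $\gamma\in\mathcal{S}(m)$, and the gluing in the quotient graph works because the exploration cluster, upon reaching a box, automatically contains a translated path of $\mathcal{S}(m)$ rooted there (it must travel distance at least $m$ back towards the origin), so the criterion re-applies at each step. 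Relatedly, a static ``Peierls argument on independent good boxes'' is the wrong skeleton: the decisive estimate is a \emph{conditional} one (Lemma~\ref{sec:lemma-1}) --- given the explored cluster $H$ with all its boundary edges closed, a negative-information conditioning that defeats FKG, the sprinkling $\xi^{z'}$ still connects $H$ to the next box with probability at least $p_0>\mathrm{p_c}^{\!\!\!\mathrm{site}}(\Z^2)$. That bound comes from a dichotomy on the size of the random set $W$ of boundary edges whose exterior endpoint reaches the target box: if $|W|\le t$ the criterion is contradicted up to a cost $\eta(1-p)^{-t}$, and if $|W|>t$ some edge of $W$ is sprinkled open except with probability $(1-\delta/\kappa)^t$. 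Nothing in your sketch produces this uniform-over-the-past estimate; ``sprinkling bridges crossings with probability tending to $1$'' cannot be taken for granted under that conditioning, and it is exactly where $\eta$ gets quantified.

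Two further gaps. First, the geometry of the renormalized lattice: tiling $H$ by translates of the $N$-box indexed by ``a quotient of $\Z^d$'' does not give a genuine $\Z^2$ of boxes unless the plane carrying the good quadruple $(a,b,u,v)$ injects into the quotient. The paper arranges this by choosing, for each $\Lambda$, an orthonormal basis $\mathbf e$ with $\Lambda\subset\mathrm{Ker}(\pi_{\mathbf e})\times T$, and this is precisely why $\mathscr{FC}(p,N,\eta)$ must hold for \emph{every} orthonormal basis (hence the compactness argument in Lemma~\ref{lem:finite-criterion}). Your transfer step, which only matches balls of radius $N$, yields the criterion for a single basis and gives no basis adapted to $\Lambda$; without it the boxes can wrap around and the renormalized process need not live on $\Z^2$. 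Second, you never control how many times a given edge is sprinkled along the iteration; the paper bounds the overlap of the corridors by an absolute constant $\kappa$ (Lemma~\ref{sec:geom-sett-boxes}) and sprinkles at rate $\delta/\kappa$ per corridor, so that the union of all configurations is still dominated by $(p+\delta)$-percolation. Without such a bound your final configuration could exceed parameter $p+\delta$ and the lemma's conclusion would not follow.
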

Assuming these two lemmas, let us prove Theorem~\ref{continuitythmone}.

\begin{proof}[Proof of Theorem~\ref{continuitythmone}]

  Let $\grp_n^{\bul} \xrightarrow[n\to \infty]{} \grp^\bul$ denote a
  converging sequence of elements of $\tilde\Grp$. Our goal is to
  establish that $\mathrm{p_c}^{\!\!\!\bul}(\grp_n^\bul)
  \xrightarrow[n\to \infty]{} \mathrm{p_c}^{\!\!\!\bul}(\grp^\bul)$.

  For $n$ large enough, $\grp_n^\bul$ is a quotient of $\grp^\bul$.
  (See Proposition~\ref{prop:conv-seq}.) By Theorem~\ref{quotient},
  for $n$ large enough, $\mathrm{p_c}^{\!\!\!\bul}(\grp^\bul)\leq
  \mathrm{p_c}^{\!\!\!\bul}(\grp^\bul_n)$. Hence, we only need to
  prove that $\limsup \mathrm{p_c}^{\!\!\!\bul}(\grp_n^\bul) \leq
  \mathrm{p_c}^{\!\!\!\bul}(\grp^\bul).$

  Take $p> \mathrm{p_c}$ and $\delta > 0$. By
  Lemma~\ref{lem:finite-criterion}, we can pick $N$ such that
  $\mathscr{FC}(p,N,\eta)$ is satisfied. Lemma~\ref{lem:renormalization}
  then guarantees that, for $n$ large enough,
  $\mathrm{p_c}^{\!\!\!\bul}(\grp^\bul_n)\leq p+\delta$, which ends
  the proof.
\end{proof}

\section{Proof of Lemma~\ref{lem:finite-criterion}}

\label{supercritical}
Through the entire section, we fix:
\begin{itemize}
\item[-] $\grp^\bul \in \tilde\Grp$ a marked abelian group of rank
greater than two,
\item[-] $p\in (\mathrm{p_c}^{\!\!\!\bul}(G^\bul),1)$,
\item[-] $\eta>0$.
\end{itemize}
We write $\grp^\bul$ under the form $\left \lbrack \Z^r \times T; \: S
\right \rbrack$, where $T$ is a finite abelian group. Let
$\mathcal{G}=(V,E)=(\Z^r\times T, E)$ denote the Cayley graph
associated to $\grp^\bul$. Paths and percolation will always be
considered relative to this graph structure.

\subsection{Setting and notation}

\subsubsection{Between continuous and discrete}

An element of $\Z^r\times T$ will be
written
\begin{equation}
x=({x}_\text{free},{x}_\text{tor}).
\end{equation}
For the geometric reasonings, we will use linear algebra tools. (The
vertex set --- $\Z^r\times T$ --- is roughly $\R^r$.) Endow $\R^r$ with
its canonical Euclidean structure. We denote by $\|~\|$ the associated
norm and $\mathbb{B}(v,R)$ the closed ball of radius $R$ centered at
$v\in\R^r$. If the center is $0$, this ball may be denoted by
$\mathbb{B}(R)$. Set $R_S := \max_{s\in S} \|s_\text{free}\|$. In
$\mathcal G$, we define for $k>0$ 
\begin{align}
 B(k)&:=\{x:\|x_{\mathrm{free}}\|\leq k R_S\}\\
 &\:=(\mathbb{B}(kR_S)\cap \Z^d )\times T.
\end{align}

Up to section~\ref{sec:stab-lemma-under}, we fix an orthornomal basis
$\mathbf{e}=(e_1,\dots,e_d)$ of $\R^r$. Define
\begin{equation}
\begin{array}{lccl}
\pi_{\mathbf e} : & \R^r & \longrightarrow &  \R^2 \\
    & \sum_{i=1}^r x_i e_i & \longmapsto & (x_1,x_2). \end{array}
\end{equation}
 We now define the
function $\mathsf{Graph}$, which allows us to move between the
continuous space $\R^2$ and the discrete set $V$. It associates to
each subset $X$ of $\R^2$ the subset of $V$ defined by
\begin{equation}
\label{eq:5}
\gr{X}:= \left ( \left( \pi_{\mathbf e}^{-1}(X)+ \mathbb{B}(R_S)\right) \cap \Z^r
\right ) \times T.
\end{equation}
In section~\ref{sec:stab-lemma-under}, we will have to consider
different bases. To insist on the dependence on $\mathbf{e}$, we will write
$\mathsf{Graph}_\mathbf{e}$.

If $a$ and $b$ belong to $\R^2$, we will consider the segment $[a,b]$
and the parallelogram $[a,b,-a,-b]$ spanned by $a$ and $b$ in $\R^2$,
defined respectively by
\begin{align}
  &[a,b]=\{\lambda a + (1-\lambda)b \;;\; 0\leq\lambda\leq 1\} \text{ and}\\
  &[a,b,-a,-b]=\{\lambda a + \mu b \; ;\; |\lambda|+|\mu|\leq 1 \}
\end{align}
Write then $L(a,b):=\gr{[a,b]}$ and $R(a,b):=\gr{[3a,3b,-3a,-3b]}$ the
corresponding subsets of $V$.

The following lemma illustrates one important property of the function $\mathsf{Graph}$
connecting continuous and discrete.

\begin{lemma}
\label{contdiscrete}
  Let $X\subset \R^2$. Let $\gamma$ be a finite path of length $k$
  in $\mathcal{G}$. Assume that $\gamma_0\in \gr{X}$ and $\gamma_k
  \not\in \gr{X}$. Then the support of $\gamma$ intersects
  $\gr{\partial X}$.
\end{lemma}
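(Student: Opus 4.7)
I would push the problem down to $\R^2$ via $\pi_{\mathbf{e}}$ and run an elementary topological argument along a well-chosen segment.

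First I would set $p_i := \pi_{\mathbf{e}}((\gamma_i)_{\mathrm{free}}) \in \R^2$ for $0 \le i \le k$. Since $\pi_{\mathbf{e}}^{-1}(Y) = Y \times \R^{r-2}$ in the orthonormal basis $\mathbf{e}$, a direct distance computation yields the reformulation
\begin{equation}
\gamma_i \in \gr{Y} \iff d_{\R^2}(p_i, Y) \leq R_S
\end{equation}
valid for every $Y \subset \R^2$. Moreover, two consecutive vertices of $\gamma$ differ by some $\pm s \in \pm S$, whose free parts have norm at most $R_S$; since $\pi_{\mathbf{e}}$ is $1$-Lipschitz, this gives $\|p_{i+1} - p_i\| \leq R_S$ for every $i$.

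Next I would select the critical index $i^* := \max\{i : \gamma_i \in \gr{X}\}$, which by hypothesis belongs to $\{0, \ldots, k-1\}$, and translate the information to $d(p_{i^*}, X) \leq R_S < d(p_{i^*+1}, X)$. Combined with the step bound $\|p_{i^*+1} - p_{i^*}\| \le R_S$, this forces $p_{i^*} \notin X$: otherwise $p_{i^*}$ itself would witness $d(p_{i^*+1}, X) \leq R_S$. If furthermore $p_{i^*} \in \overline{X}$, then $p_{i^*} \in \overline{X} \cap X^c \subset \partial X$ and we are done. Otherwise I pick $x \in X$ with $\|p_{i^*} - x\| \leq R_S$, walk along the segment $\sigma(t) := (1-t)p_{i^*} + tx$, and set $t_0 := \inf\{t \in [0, 1] : \sigma(t) \in \overline{X}\}$. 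Because $\overline{X}$ is closed the infimum is attained and positive, and because $\sigma([0, t_0)) \subset \overline{X}^c$, the point $y := \sigma(t_0)$ belongs to $\overline{X} \cap \overline{X^c} = \partial X$. The estimate $\|p_{i^*} - y\| = t_0 \|p_{i^*} - x\| \leq R_S$ then places $\gamma_{i^*}$ in $\gr{\partial X}$, which ends the proof.

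The main subtlety is the choice of segment. A naive intermediate value argument applied to $t \mapsto d(\sigma(t), X)$ on the segment $[p_{i^*}, p_{i^*+1}]$ yields a boundary point only within distance $2R_S$ of $\gamma_{i^*}$, one factor too large to guarantee membership in $\gr{\partial X}$. Walking instead from $p_{i^*}$ toward an interior witness $x \in X$ delivers the sharp constant $R_S$. A secondary technicality is that $X$ is an arbitrary subset of $\R^2$ and need not be open or closed, which is why $\partial X$ is reached through an infimum construction rather than via a direct IVT on a distance function.
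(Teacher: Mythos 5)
Your proof is correct and follows essentially the same route as the paper's: reduce to the last edge leaving $\gr{X}$ and locate a point of $\partial X$ within distance $R_S$ of the projection of that edge's first endpoint. The paper obtains the boundary point slightly more directly, by noting that the connected set $\pi\left(\mathbb{B}(x_{\mathrm{free}},R_S)\right)$ meets both $X$ and its complement and hence $\partial X$; your first-hitting-time argument along the segment from $p_{i^*}$ to a witness $x\in X$ is an explicit implementation of the same connectedness fact.
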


\begin{proof} It suffices to show that if $x$ and $y$ are two
  neighbours in $\mathcal{G}$ such that $x\in \gr{X}$ and $y \notin
  \gr{X}$, then $x$ belongs to $\gr{\partial X}$. By definition of
  $\mathsf{Graph}$, we have
  $x_{\mathrm{free}}\in\pi^{-1}(X)+\mathbb{B}(R_S )$, which can be
  restated as
  \begin{equation}
    \label{eq:6}
    \pi\left(\mathbb{B}(x_{\mathrm{free}},R_S)\right)\cap X
    \neq \emptyset.
  \end{equation}
  By definition of $R_S$, we have $y_{\mathrm{free}}\in
  \mathbb{B}(x_{\mathrm{free}},R_S) $ and our assumption on $y$
  implies that $\pi(y_{\mathrm{free}})\notin X$, which gives
  \begin{equation}
    \label{eq:7}
    \pi\left(\mathbb{B}(x_{\mathrm{free}},R_S)\right)\cap ~^cX
    \neq \emptyset.
  \end{equation}
  Since $\pi\left(\mathbb{B}(x_{\mathrm{free}},R_S)\right)$ is
  connected, \eqref{eq:6} and \eqref{eq:7} implies that
  \begin{equation}
    \pi\left(\mathbb{B}(x_{\mathrm{free}},R_S)\right)\cap \partial X
    \neq \emptyset
  \end{equation}
  which proves that $x$ belongs to $\gr{\partial X}$.
 \end{proof}

 \subsubsection{Percolation toolbox}

 \paragraph{Probabilistic notation} We denote by $\P_p$ the law of independent bond
 percolation of parameter $p\in[0,1]$ on $\mathcal{G}$.

 \paragraph{Connections} Let $A$, $B$ and $C$ denote three subsets of
 $V$. The event ``there exists an open path intersecting $A$ and $B$
 that lies in $C$'' will be denoted by ``$A\lr[C] B$''. The event
 ``restricting the configuration to $C$, there exists a unique
 component that intersects $A$ and $B$'' will be written
 ``$A\lr[!C!]B$''. The event ``there exists an infinite open path that
 touches $A$ and lies in $C$ will be denoted by ``$A \lr[C] \infty$''.
 If the superscript $C$ is omitted, it means that $C$ is taken to be
 the whole vertex set.

 This paragraph contains the percolation results that will be needed
 to prove Theorem~\ref{continuitythmone}. The following lemma,
 sometimes called ``square root trick'', is a straightforward
 consequence of Harris-FKG inequality.

\begin{lemma}
  \label{lemFKG}
  Let $\mathcal A$ and $\mathcal B$ be two increasing events. Assume
  that $\displaystyle \Pp{\cal A} \ge \Pp{\cal B}$. Then, the
  following inequality holds:
  \begin{equation}
    \Pp{\cal A} \geq 1-\left(1-\Pp{\mathcal A \cup \mathcal B }\right)^{1/2}.
  \end{equation}
\end{lemma}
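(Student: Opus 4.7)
The plan is to apply the Harris--FKG inequality to the complements $\mathcal{A}^c$ and $\mathcal{B}^c$, which are decreasing events because $\mathcal{A}$ and $\mathcal{B}$ are increasing. FKG in its decreasing form yields the positive correlation
\begin{equation}
\Pp{\mathcal{A}^c \cap \mathcal{B}^c} \;\geq\; \Pp{\mathcal{A}^c}\,\Pp{\mathcal{B}^c}.
\end{equation}
Rewriting the left-hand side as $1-\Pp{\mathcal{A}\cup \mathcal{B}}$ and the right-hand side as $(1-\Pp{\mathcal{A}})(1-\Pp{\mathcal{B}})$, this becomes
\begin{equation}
1-\Pp{\mathcal{A}\cup \mathcal{B}} \;\geq\; (1-\Pp{\mathcal{A}})\,(1-\Pp{\mathcal{B}}).
\end{equation}

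Next, I use the hypothesis $\Pp{\mathcal{A}}\geq \Pp{\mathcal{B}}$, equivalently $1-\Pp{\mathcal{A}} \leq 1-\Pp{\mathcal{B}}$, to bound the right-hand side from below by $(1-\Pp{\mathcal{A}})^2$. Taking square roots (both sides lie in $[0,1]$) gives $1-\Pp{\mathcal{A}} \leq (1-\Pp{\mathcal{A}\cup \mathcal{B}})^{1/2}$, which rearranges to the claimed inequality
\begin{equation}
\Pp{\mathcal{A}} \;\geq\; 1-(1-\Pp{\mathcal{A}\cup \mathcal{B}})^{1/2}.
\end{equation}

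There is no real obstacle here: the only ingredient beyond elementary manipulation is Harris--FKG for Bernoulli percolation, which is a standard tool stated in the references \cite{grim,lp}. The symmetric role played by $\mathcal{A}$ and $\mathcal{B}$ in $\mathcal{A}\cup\mathcal{B}$ is precisely what lets one trade the weaker bound $(1-\Pp{\mathcal{A}})(1-\Pp{\mathcal{B}})$ for the square $(1-\max(\Pp{\mathcal{A}},\Pp{\mathcal{B}}))^2$, which is where the square root in the name of the trick comes from.
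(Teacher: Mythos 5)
Your proof is correct and follows exactly the standard route the paper has in mind: the authors state the lemma as a ``straightforward consequence of Harris--FKG'' without writing out the details, and your argument (FKG applied to the decreasing events $\mathcal{A}^c$, $\mathcal{B}^c$, then the hypothesis $\Pp{\mathcal A}\ge\Pp{\mathcal B}$ to replace the product by a square) supplies precisely those details.
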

The lemma above is often used when $\Pp{\mathcal A} =\Pp{\mathcal B}$,
in a context where the equality of the two probabilities is provided
by symmetries of the underlying graph (see \cite{grim}). This slightly
generalized version allows to link geometric properties to
probabilistic estimates whithout any symmetry assumption, as
illustrated by the following lemma.

\begin{lemma}\label{lem:connections}
  Let $a$ and $b$ be two points in $\R^2$. Let $A \subset V$ be a
  subset of vertices of $\mathcal G$. Assume that
  \begin{equation}
    \label{eq:8}
    \Pp{A\lr L(a,b)}>1-\varepsilon^2\text{ for some $\varepsilon>0$.}
  \end{equation}
  Then, there exists $u\in [a,b]$ such that  both $\Pp{ A
  \lr L(a,u)}$ and $\Pp{ A
  \lr L(u,b)}$ exceed $1-\varepsilon$.
\end{lemma}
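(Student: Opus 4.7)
The plan is to exploit the decomposition $L(a,b) = L(a,u) \cup L(u,b)$, valid for every $u \in [a,b]$: indeed $[a,b] = [a,u] \cup [u,b]$, and the operator $\mathsf{Graph}$ distributes over unions since $\pi_{\mathbf{e}}^{-1}$, Minkowski sum with $\mathbb{B}(R_S)$, intersection with $\Z^r$, and Cartesian product with $T$ all commute with unions. Setting $\alpha(u) := \Px{A \lr L(a,u)}$ and $\beta(u) := \Px{A \lr L(u,b)}$, the two underlying events are increasing, and their union is exactly $\{A \lr L(a,b)\}$, whose probability exceeds $1 - \varepsilon^2$ by hypothesis.

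Applying Lemma~\ref{lemFKG} with the event of larger probability in the role of $\mathcal{A}$ then yields the max bound
\begin{equation}
\max\bigl(\alpha(u), \beta(u)\bigr) \;\geq\; 1 - \bigl(1 - \Px{A \lr L(a,b)}\bigr)^{1/2} \;>\; 1 - \varepsilon \qquad \text{for every } u \in [a,b].
\end{equation}

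It remains to locate a single $u$ at which \emph{both} $\alpha$ and $\beta$ exceed $1-\varepsilon$. Parametrize $u_t = (1-t)a + tb$ for $t \in [0,1]$; then $t \mapsto \alpha(u_t)$ is non-decreasing and $t \mapsto \beta(u_t)$ is non-increasing, because $L(a, u_t)$ grows and $L(u_t, b)$ shrinks monotonically as $t$ increases. Define
\begin{equation}
t_\alpha := \sup\{t \in [0,1] : \alpha(u_t) \leq 1-\varepsilon\}, \qquad t_\beta := \inf\{t \in [0,1] : \beta(u_t) \leq 1-\varepsilon\},
\end{equation}
with the convention $\sup \emptyset = 0$ and $\inf \emptyset = 1$. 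If $t_\alpha > t_\beta$, any $t \in (t_\beta, t_\alpha)$ would satisfy $\alpha(u_t) \leq 1-\varepsilon$ and $\beta(u_t) \leq 1-\varepsilon$ simultaneously, contradicting the strict max bound. Hence $t_\alpha \leq t_\beta$, and choosing $t$ strictly between them produces $u = u_t$ with $\alpha(u) > 1-\varepsilon$ and $\beta(u) > 1-\varepsilon$.

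The main (minor) obstacle is that $\alpha$ and $\beta$ are only monotone, not continuous --- the sets $L(a, u_t)$ change in discrete jumps as $t$ varies --- so a literal intermediate value theorem is not available and must be replaced by the sup/inf comparison above. The borderline case $t_\alpha = t_\beta$ is handled either by invoking one-sided limits of the two monotone functions or by rerunning the argument with $\varepsilon' < \varepsilon$ and passing to the limit.
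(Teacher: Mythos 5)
Your first half follows the paper's own strategy: the decomposition $L(a,b)=L(a,u)\cup L(u,b)$ together with Lemma~\ref{lemFKG} does give $\max(\alpha(u),\beta(u))>1-\varepsilon$ for every $u\in[a,b]$. The gap is in the second half, precisely at the borderline case $t_\alpha=t_\beta=:t^*$, which you flag but do not actually close --- and neither of your two suggested repairs works. Re-running with $\varepsilon'<\varepsilon$ cannot help: as $\varepsilon'$ decreases, $t_\alpha(\varepsilon')$ increases and $t_\beta(\varepsilon')$ decreases while remaining ordered, so if they coincide for $\varepsilon$ they coincide at the same $t^*$ for every admissible $\varepsilon'$. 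One-sided limits are also not available for the unrestricted events you use: the sets $L(a,u_t)$ are infinite, and the ``entry time'' of a vertex $x$ (the smallest $t$ with $x\in L(a,u_t)$) is governed by $\pi_{\mathbf e}(x_{\mathrm{free}})$, which ranges over a typically dense subset of $\R^2$; hence infinitely many new vertices can enter $L(a,u_t)$ as $t\downarrow t^*$, the inclusion $\{A\lr L(a,u_{t^*})\}\subset\bigcap_{t>t^*}\{A\lr L(a,u_t)\}$ can be strict, and $\alpha$ need not be right-continuous (nor $\beta$ left-continuous). Concretely, nothing in your argument excludes the scenario $\alpha(u_t)\le 1-\varepsilon$ for all $t\le t^*$ while $\beta(u_t)\le 1-\varepsilon$ for all $t>t^*$, in which no $u$ satisfies both bounds.

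The missing ingredient is the preliminary truncation to a finite window, which is the first step of the paper's proof: choose $k$ with $\Pp{A\lr L(a,b)\cap B(k)}>1-\varepsilon^2$ and run the whole argument with $L(\cdot,\cdot)\cap B(k)$. Then $L(a,u)\cap B(k)$ takes only finitely many values, so $\alpha_k$ is constant on a right neighbourhood of every $t$ and $\beta_k$ on a left neighbourhood; the borderline case then self-destructs ($\alpha_k(u_{t^*})\le 1-\varepsilon$ would propagate to $t$ slightly larger than $t^*=t_\alpha$, and symmetrically for $\beta_k$), and the windowed bounds imply the unwindowed ones by monotonicity. The paper implements the same resolution combinatorially rather than via sup/inf: it picks a finite sequence $u_1,\dots,u_n$ with the staggered covering property that $L(a,b)\cap B(k)$ equals $(L(a,u_i)\cap B(k))\cup(L(u_{i+1},b)\cap B(k))$, and extracts both estimates at the first index where the comparison between the two probabilities flips. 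With the truncation added, your monotone sup/inf formulation is a correct and slightly cleaner packaging of the same argument; without it, the proof is incomplete.
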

\begin{rem}
  The same statement holds when we restrict the open paths to lie in a
  subset $C$ of $V$.
\end{rem}

\begin{proof}
  We can approximate the event estimated in inequality~\eqref{eq:8}
  and pick $k$ large enough such that
  \begin{equation}
     \Pp{A\lr L(a,b)\cap B(k) }>1-\varepsilon^2.
  \end{equation}
 
  The set $L(a,b)\cap B(k)$ being finite, there are only finitely many
  different sets of the form $L(a,u)\cap B(k)$ for $u\in[a,b]$. We can
  thus construct $u_1,u_2\ldots,u_n \in [a,b]$ such that $u_1=a$ and
  $u_n=b$, and for all $1\le i <n$,
\begin{enumerate}
\item $[a,u_i]$ is a strict subset of $[a,u_{i+1}]$,
\item $L(a,b)\cap B(k)$ is the union of $L(a,u_i)\cap B(k)$ and
  $L(u_{i+1},b)\cap B(k)$.
\end{enumerate}
Assume that for some $i$, the following inequality holds:
 \begin{equation}
  \label{eq:9}
  \Pp { A\lr L(a,u_{i}) \cap B(k) } \ge \Pp { A\lr L(u_{i+1},b) \cap B(k)}.
\end{equation}
Lemma~\ref{lemFKG} then implies that 
\begin{equation}
\Pp {A\lr
    L(a,u_{i}) \cap B(k)} > 1- \varepsilon.
\end{equation}

If inequality~\eqref{eq:9} never holds (resp. if it holds for all
possible $i$), then $A$ is connected to $L(\{a\})$ (resp. to
$L(\{b\})$) with probability exceeding $1-\varepsilon$. In these two cases, the
conclusion of the lemma is trivially true. We can assume that we are
in none these two situations, and define $j\in \{2,\ldots, n-1\}$ to be the
smallest possible $i$ such that inequality~\eqref{eq:9} holds. We
will show the conclusion of
Lemma~\ref{lem:connections} holds for $u=u_j$.  We already have 
\begin{equation}
\Pp {A\lr
    L(a,u_j) \cap B(k)} > 1- \varepsilon,
\end{equation}
and inequality \eqref{eq:9} does not hold for $i=j-1$. Once again,
Lemma~\ref{lemFKG} implies that
   \begin{equation}
        \Pp { A\lr L(u_j,b) \cap B(k) }>1-\varepsilon.
  \end{equation}
\end{proof}

\begin{lemma}
\label{lem:perco}
Bernoulli percolation on $\mathcal{G}$ at a parameter
$p>\mathrm{p_c}(\mathcal{G})$ produces almost surely a unique infinite
component. Moreover, any fixed infinite subset of $V$ is intersected
almost surely infinitely many times by the infinite component.
\end{lemma}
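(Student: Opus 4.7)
The plan is to treat the two assertions separately. For the uniqueness statement, I would invoke the Burton--Keane theorem: $\mathcal G$ is the Cayley graph of the finitely generated abelian group $\Z^r\times T$, which is amenable (the sets $B(k)$ form a Følner sequence, since $|B(k+1)|/|B(k)|\to 1$ as $k\to\infty$), so the classical insertion-tolerance/trifurcation argument yields a unique infinite component $\mathcal C_\infty$ almost surely at any parameter $p$.

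For the second assertion, enumerate the fixed infinite subset as $V_0=\{v_n:n\ge 1\}$, set $A_n:=\{v_n\lr\infty\}$, and denote by $E$ the event $\limsup_n A_n=\{|V_0\cap\mathcal C_\infty|=\infty\}$. The key step is to check that $E$ belongs to the tail $\sigma$-algebra of the edge-state product space. For this, I would observe that flipping a single edge modifies the union $\mathcal C^\infty$ of all infinite clusters by a finite symmetric difference only: closing an edge inside an infinite cluster may at worst split off a finite piece, while opening a closed edge can at worst absorb one finite cluster into an infinite one. Consequently $|V_0\cap\mathcal C^\infty|$ is perturbed by at most a finite amount under any finite modification, so $E$ is indeed a tail event and Kolmogorov's $0$--$1$ law yields $\Pp{E}\in\{0,1\}$. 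I would then rule out the value $0$ via the reverse Fatou lemma together with vertex-transitivity:
\begin{equation}
    \Pp{E}\;=\;\Pp{\textstyle\limsup_n A_n}\;\ge\;\limsup_n \Pp{A_n}\;=\;\theta(p)\;>\;0,
\end{equation}
since $\Pp{A_n}=\theta(p)$ for every $n$ by transitivity of $\mathcal G$, and $\theta(p)>0$ because $p>\mathrm{p_c}(\mathcal G)$. Combined with the $0$--$1$ law this forces $\Pp{E}=1$, which is the desired conclusion.

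The main delicate point is the verification that $E$ lies in the tail $\sigma$-algebra; the elementary case analysis describing how $\mathcal C^\infty$ is perturbed under a single edge flip has to be spelled out with care, in particular to rule out that the symmetric difference could be infinite when an infinite cluster is bisected. Everything else — Burton--Keane, reverse Fatou, Kolmogorov's $0$--$1$ law — is off-the-shelf. Note in passing that the argument for the second part does not actually rely on the uniqueness proved in the first part: one may work throughout with the union $\mathcal C^\infty$ of all infinite clusters, and invoke uniqueness only a posteriori to identify $\mathcal C^\infty$ with $\mathcal C_\infty$ in the conclusion.
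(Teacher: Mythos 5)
Your argument is correct and takes essentially the same route as the paper, whose entire proof consists of citing Burton--Keane for uniqueness and Kolmogorov's $0$--$1$ law for the second assertion. Your verification that $\{|V_0\cap\mathcal C^\infty|=\infty\}$ is a tail event (via the finite-symmetric-difference analysis of a single edge flip) and the reverse-Fatou lower bound $\Pp{E}\ge\theta(p)>0$ are a sound elaboration of exactly what the paper leaves implicit.
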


The first part of the lemma is standard (see \cite{bk} or
\cite{grim}). The second part stems from the $0\kern 1pt$-$1$ law of Kolmogorov.

\subsection{Geometric constructions}

\label{sec:geom-cons}

In this section, we aim to prove that a set connected to infinity with
high probability also has ``good'' local connections. To formalize this,
we need a few additionnal definitions.  We say that $(a,b,u,v) \in \left(\R^2\right)^4$ is a \emph{good quadruple} if
\begin{enumerate}
\item $u=\frac{a+b}2$,
\item $v\in [-a,b]$ and
\item\label{item:1} $[a,b,-a,-b]$ contains the planar ball of radius $R_S$.
\end{enumerate}
Property~\ref{item:1} ensures that the parallelogramm $[a,b,-a,-b]$ is
not too degenerate. 
\begin{figure}[htbp]
  \centering
  \includegraphics[width=\textwidth/2]{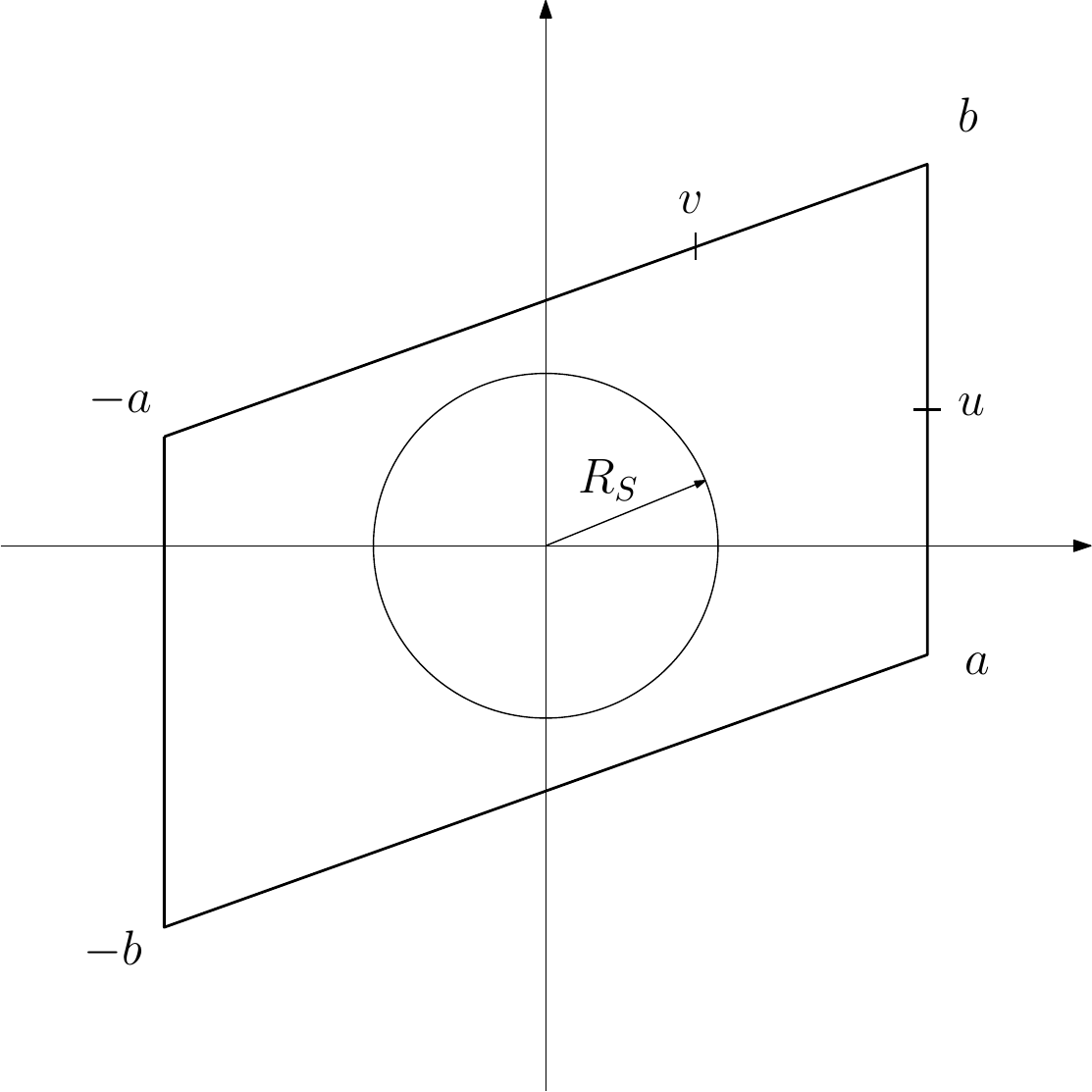}
  \caption{A good quadruple}
  \label{fig:zones}
\end{figure}
To each good quadruple $(a,b,u,v)$, we associate the following four
subsets of the graph~$\cal G$:
\begin{equation}
\mathcal{Z}(a,b,u,v)=
      \left\{ L\left(a,u\right)
      ,L\left(u,b\right),
      L(b,v),L(v,-a) \right\}.
\end{equation}

 \begin{lemma}\label{lem:geometric}
   Let $A$ be a finite subset of $V$ containing $0$ and such that \[
   -A:=\{-x;x\in A\}=A.\] Let $k\ge 1$ be such that $B:=B(k)$ contains $A$. Assume the following relation to hold for some $\varepsilon \in (0,1)$:
   \[
    \Pp{A \lr \infty}>1-\varepsilon^{24}.
   \]
   Then there exists a good quadruple $(a,b,u,v)$ such that for
    any $Z \in \mathcal{Z}(a,b,u,v)$
   \begin{enumerate}[(i)]
   \item $B \cap Z=\emptyset$, \label{item:2}
   \item $ \Pp{A \lr[R(a,b)] Z}>
     1-\varepsilon$.\label{item:3}
   \end{enumerate}
 \end{lemma}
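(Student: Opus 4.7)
The plan is to deduce the lemma from the hypothesis $\Pp{A\lr\infty}>1-\varepsilon^{24}$ by combining three ingredients: the $x\mapsto -x$ symmetry of $\mathcal{G}$ (which is a graph automorphism since the generating tuple $S$ of our abelian Cayley graph satisfies $-S=S$, and which preserves $A$ by hypothesis), the square-root trick provided by Lemma~\ref{lemFKG}, and the bisection lemma (Lemma~\ref{lem:connections}). I would first pick linearly independent $a,b\in\R^2$ so that the parallelogram $\Pi=[a,b,-a,-b]$ contains the planar ball of radius $(k+1)R_S$. This simultaneously ensures condition~(3) in the definition of a good quadruple and condition~\eqref{item:2} of the conclusion, since $B=B(k)$ projects into $\mathbb{B}(kR_S)$ and the $R_S$-thickening $\gr{\partial\Pi}$ is then at positive distance from $B$. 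Since $A\subset \gr{\Pi}$, any infinite open path from $A$ must leave $\gr{\Pi}$; Lemma~\ref{contdiscrete} forces its prefix to intersect $\gr{\partial\Pi}=L(a,b)\cup L(b,-a)\cup L(-a,-b)\cup L(-b,a)$, and that prefix lies inside $R(a,b)$. Hence
\[
\Pp{A\lr[R(a,b)] \gr{\partial\Pi}}\ge\Pp{A\lr\infty}>1-\varepsilon^{24}.
\]

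Next I would cut this estimate in half using negation. The map $x\mapsto -x$ preserves $A$, the percolation law and the region $R(a,b)$, while swapping the two events
\[
\mathcal{A}=\{A\lr[R(a,b)] L(a,b)\cup L(b,-a)\}\quad\text{and}\quad \mathcal{B}=\{A\lr[R(a,b)] L(-a,-b)\cup L(-b,a)\}.
\]
Thus $\Pp{\mathcal{A}}=\Pp{\mathcal{B}}$, their union is the event of the previous display, and Lemma~\ref{lemFKG} yields $\Pp{\mathcal{A}}>1-\varepsilon^{12}$. I would then apply a polyline variant of Lemma~\ref{lem:connections} (the proof extends verbatim by parametrising $[a,b]\cup[b,-a]$ as a single arc) twice to this polyline, producing cut points that split it into four arcs each connected to $A$ inside $R(a,b)$ with probability at least $1-\varepsilon^{3}>1-\varepsilon$. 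This handles condition~\eqref{item:3}.

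The main obstacle is to arrange that the cut produced on $[a,b]$ coincides with the midpoint $u=(a+b)/2$ required by the good-quadruple definition, whereas Lemma~\ref{lem:connections} a priori places the cut at a balanced location dictated by the percolation measure. I would overcome this by exploiting the remaining freedom in choosing $(a,b)$: fixing $b$ and sliding $a$ along a ray, the balanced cut point on $[a,b]$ moves continuously with $a$, and a short intermediate-value argument produces parameters for which this balanced point is exactly the midpoint. The second cut, which falls in $[b,-a]$, is then taken as $v$; it is subject only to the constraint $v\in[-a,b]$, which is automatic. Throughout this deformation, the containment $\Pi\supset \mathbb{B}((k+1)R_S)$ is preserved by keeping the initial parallelogram large enough, so conditions~(3) and~\eqref{item:2} remain valid. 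Combining the bounds above with this choice yields the required good quadruple $(a,b,u,v)$.
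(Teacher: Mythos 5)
Your opening moves are sound and in fact coincide with ingredients of the paper's proof: the central symmetry $x\mapsto -x$ pairs the two adjacent sides $[a,b]\cup[b,-a]$ with their antipodes, so Lemma~\ref{lemFKG} upgrades the boundary estimate to $1-\varepsilon^{12}$ for that pair, and Lemma~\ref{lem:connections} is the right tool for splitting. (One small repair: for $r\ge 3$ the set $\gr{\Pi}$ is an infinite cylinder, so an infinite open path from $A$ need not leave it; you must invoke uniqueness of the infinite cluster as in Lemma~\ref{lem:perco} to produce an open path from $A$ to $V\setminus\gr{\Pi}$ before applying Lemma~\ref{contdiscrete}.) The genuine gap is the step you yourself flag as the main obstacle: forcing the cut on $[a,b]$ to be the exact midpoint $u=\frac{a+b}{2}$. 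The intermediate-value argument does not go through. The functions $t\mapsto \Pp{A\lr L(a,t)}$ are step functions (the discrete sets $L(a,t)$ change by jumps), so the ``balanced cut point'' produced by Lemma~\ref{lem:connections} is neither canonically defined nor continuous in $(a,b)$; there is no continuity to feed into an intermediate-value theorem. Moreover, your two bisections of the polyline place three cuts at uncontrolled locations, whereas the target decomposition requires one cut at the corner $b$ and one at the prescribed midpoint --- two exact constraints with, in your scheme, one deformation parameter and no continuity. Finally, even if a deformation did move the balanced point to the midpoint, deforming $(a,b)$ changes the parallelogram, and you would have to re-derive the $1-\varepsilon^{12}$ estimate for the new adjacent side $[b,-a]$; connection probabilities to the boundary of a region are not monotone under such deformations, so nothing guarantees this survives.

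The paper resolves exactly this difficulty by inverting the order of choices: it first finds the balanced cut point $u$ on a \emph{shorter} segment $[a_0,b_0]$ and only then defines $a=u-(0,\ell)$, $b=u+(0,\ell)$, so that $u$ is the midpoint by construction and the half-segment estimates transfer by the monotonicity $L(a_0,u)\subset L(a,u)$, $L(u,b_0)\subset L(u,b)$. The price is that the estimate for the transverse side $[-a,b]$ of the \emph{enlarged} parallelogram must be recovered, and this is precisely what the crossover length $\ell_{\mathrm{eq}}(h)$ (the scale at which connection to the lateral sides overtakes connection to the top and bottom) and the near-optimal choice of $h_{\mathrm{opt}}$ in~\eqref{eq:19} are engineered to guarantee: the new half-width $\ell$ can be taken just below $\ell_{\mathrm{eq}}(h)$, so~\eqref{eq:17} still applies. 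Your proposal contains no mechanism playing the role of $\ell_{\mathrm{eq}}$, and without one the two requirements --- midpoint cut on one side, high connection probability on the adjacent side --- cannot be met simultaneously.
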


 \begin{proof}
   Let $(n,h,\ell) \in \N \times \R \times \R_+$. Define $a:=(n,h-\ell)$,
   $b:=(n,h+\ell)$ and the three following subsets of $V$ illustrated
   on Figure~\ref{fig:chimney}:
    \begin{align*}
      &C(n,h,\ell): = \gr{[a,b,-a,-b]}\\
      &L \kern -1.7pt R(n,h,\ell):=\gr{[a,b]\cup[-a,-b]} =L(a,b)  \cup L(-a,-b) \\
      &U \kern -3pt D(n,h,\ell):=\gr{[-a,b]\cup[-b,a]} =L(-a,b) \cup
      L(-b,a)
   \end{align*}

   \begin{figure}[htp]
   \centering
   \begin{minipage}[t]{.49\linewidth}
     \raggedleft
     \includegraphics[width=.9\linewidth]{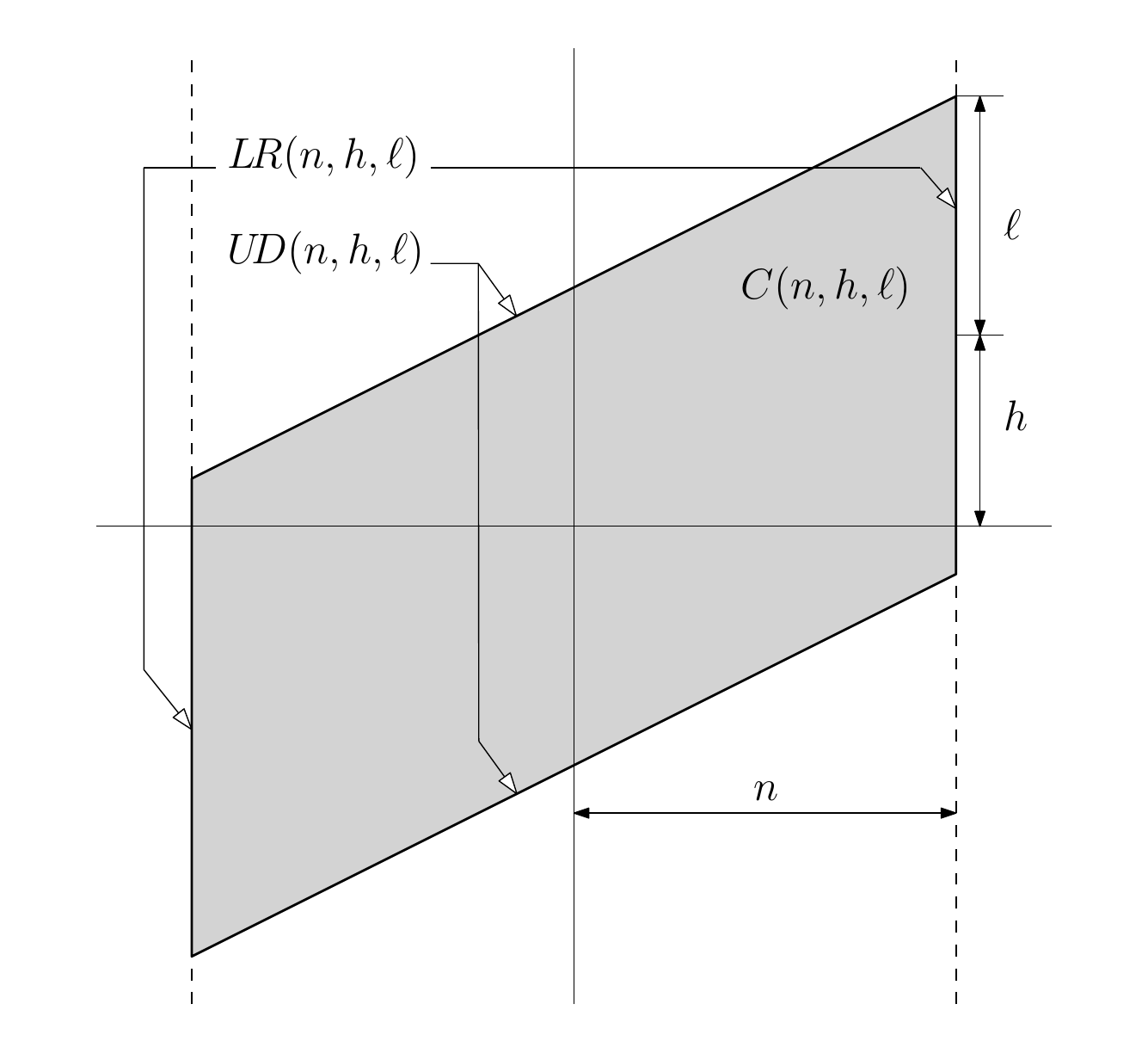} 
     \caption{Pictures of the planar sets defining $C(n,h,\ell)$, $U
       \kern -3pt D(n,h,\ell)$ and $L \kern -1.7pt R(n,h,\ell)$ }
     \label{fig:chimney}
   \end{minipage}
   \begin{minipage}[t]{.49\linewidth}
     \raggedright
     \includegraphics[width=.9\linewidth]{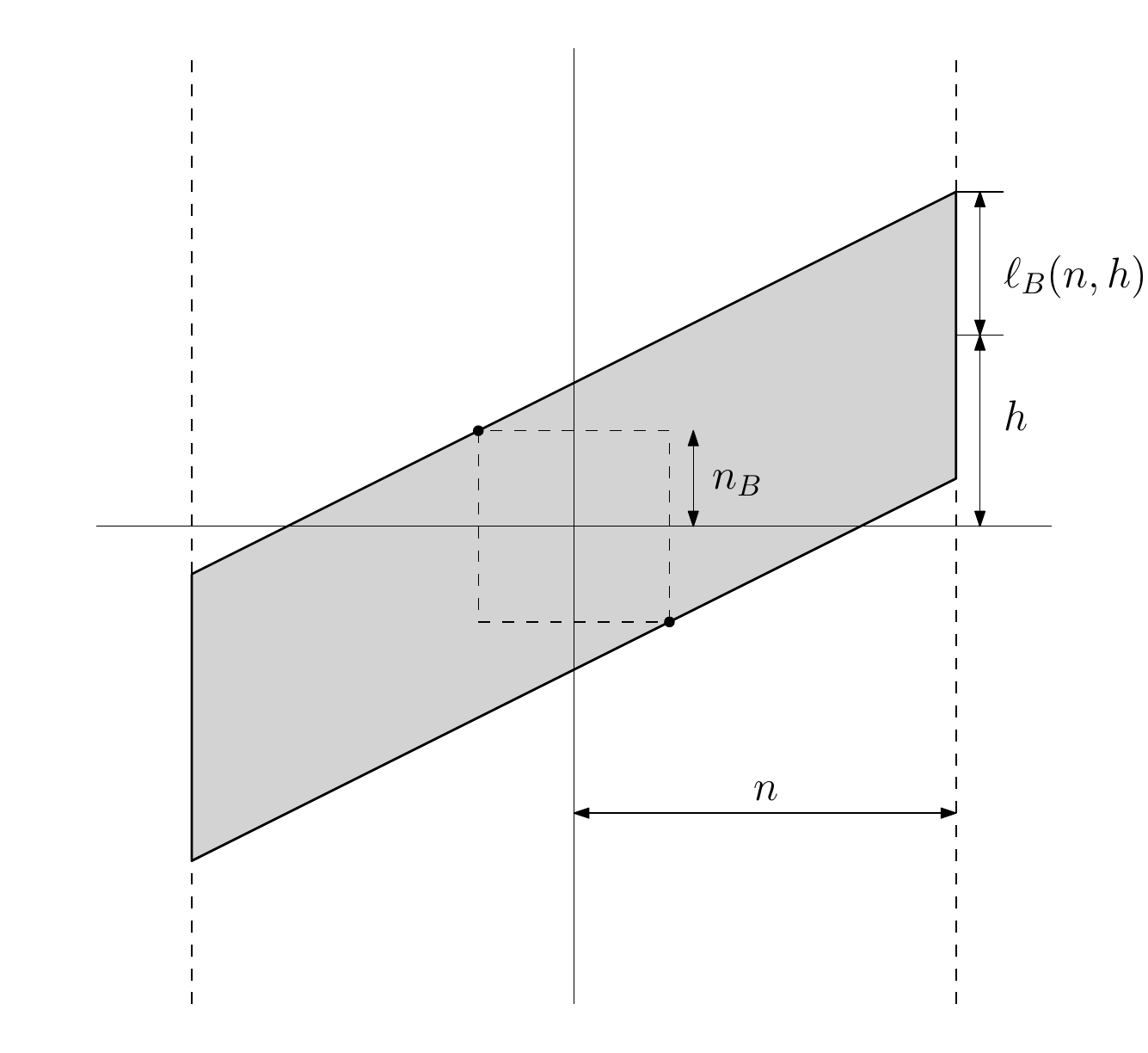}
     \caption{Definition of $\ell_B(n,h)$ }
      \label{fig:deflB}
   \end{minipage}
   \end{figure}

   Let us start by focusing on the geometric
   constraint~{(\ref{item:2})}, which we wish to translate into
   analytic conditions on the triple $(n,h,\ell)$. We fix $n_B$ large
   enough such that 
   \begin{equation}
     \label{eq:10}
      B \cap \gr{\R^2\backslash(-n_B+1,n_B-1)^2} =\emptyset.
   \end{equation}
   This way, any set defined as the image by the function $\sf 
   Graph$ of a planar set in the complement of $(-n_B+1,n_B-1)^2$ will
   not intersect $B$. In particular, defining for $n>n_B$ and
   $h\in\R$
   \begin{equation}
     \ell_B(n,h)=n_B\left(1+\frac {|h|} n\right),
   \end{equation}
   the set $U \kern -3pt D(n,h,\ell)$ does not intersect
$B$ whenever $\ell\ge \ell_B-1$. (See
   Figure~\ref{fig:deflB}.)
   Suppose that $A$ intersects the infinite cluster. By
   Lemma~\ref{lem:perco}, $V\setminus C(n,h,\ell)$ --- which is
   infinite --- intersects the infinite cluster almost surely. Thus
   there exists an open path from $A$ to $V\setminus C(n,h,\ell)$. By
   Lemma~\ref{contdiscrete}, $A$ is connected to $U \kern -3pt
   D(n,h,\ell) \cup L \kern -1.7pt R(n,h,\ell)$ within $C(n,h,\ell)$,
   which gives the following inequality:
   \begin{equation}\label{eq:12}
    \Pp{ \left ( A \lr[C(n,h,\ell)]  L \kern -1.7pt R(n,h,\ell)
       \right ) \cup \left ( A \lr[C(n,h,\ell)] U \kern -3pt D(n,h,\ell)
          \right ) }>1-\varepsilon^{24}.
   \end{equation}

   The strategy of the proof is to work with some sets $C(n,h,\ell)$
   that are balanced in the sense that
   \begin{equation}
\Pp{ A \lr[C(n,h,\ell)] L
     \kern -1.7pt R(n,h,\ell) } \text{ and } \Pp { A \lr[C(n,h,\ell)] U
     \kern -3pt D(n,h,\ell) }
 \end{equation}
 are close, and conclude with Lemma~\ref{lemFKG}. We shall now prove
 two facts, which ensure that the inequality between the two
 afore-mentioned probabilities reverses for some $\ell$ between
 $\ell_B(n,h)$ and infinity.
   
 \begin{fact}
   \label{lemBinC}
   There exists $n> n_B$ such that, for all $h \in \R$, when $\ell =
   \ell_B(n,h)$
   \[
   \Pp{A \lr[C(n,h,\ell)] L \kern -1.7pt R(n,h,\ell)} < \Pp{A
     \lr[C(n,h,\ell)] U \kern -3pt D(n,h,\ell)}.
   \]
 \end{fact}
 \begin{proof}[Proof of fact~\ref{lemBinC}]
   For $n> n_B+R_S$, define the following sets, illustrated on
   Figure~\ref{fig:butterfly}:
   \begin{align*}
     &X= \gr{\left((-\infty,n_B)\times \R\right) \cup
       \left(\R\times [-n_B,\infty)\right)}\\
     &\partial X=\gr{\left( \{n_B\}\times (-\infty,-n_B] \right) \cup
       \left([n_B,\infty)\times\{-n_B\}\right)} \\
     &X_n=\gr{\left([-n,n_B)\times \R\right) \cup
       \left([-n,n]\times [-n_B,\infty)\right)}\\
     &\partial_1{X_n}=\gr{\{-n\}\times\R\cup\{n\}\times[-n_B,\infty)}\\
     &\partial_2{X_n}=\gr{ \{n_B\}\times (-\infty,-n_B]\cup
       [n_B,n]\times\{-n_B\}}
     \end{align*}
     \begin{figure}[htp]
       \centering
       \includegraphics[width=7cm]{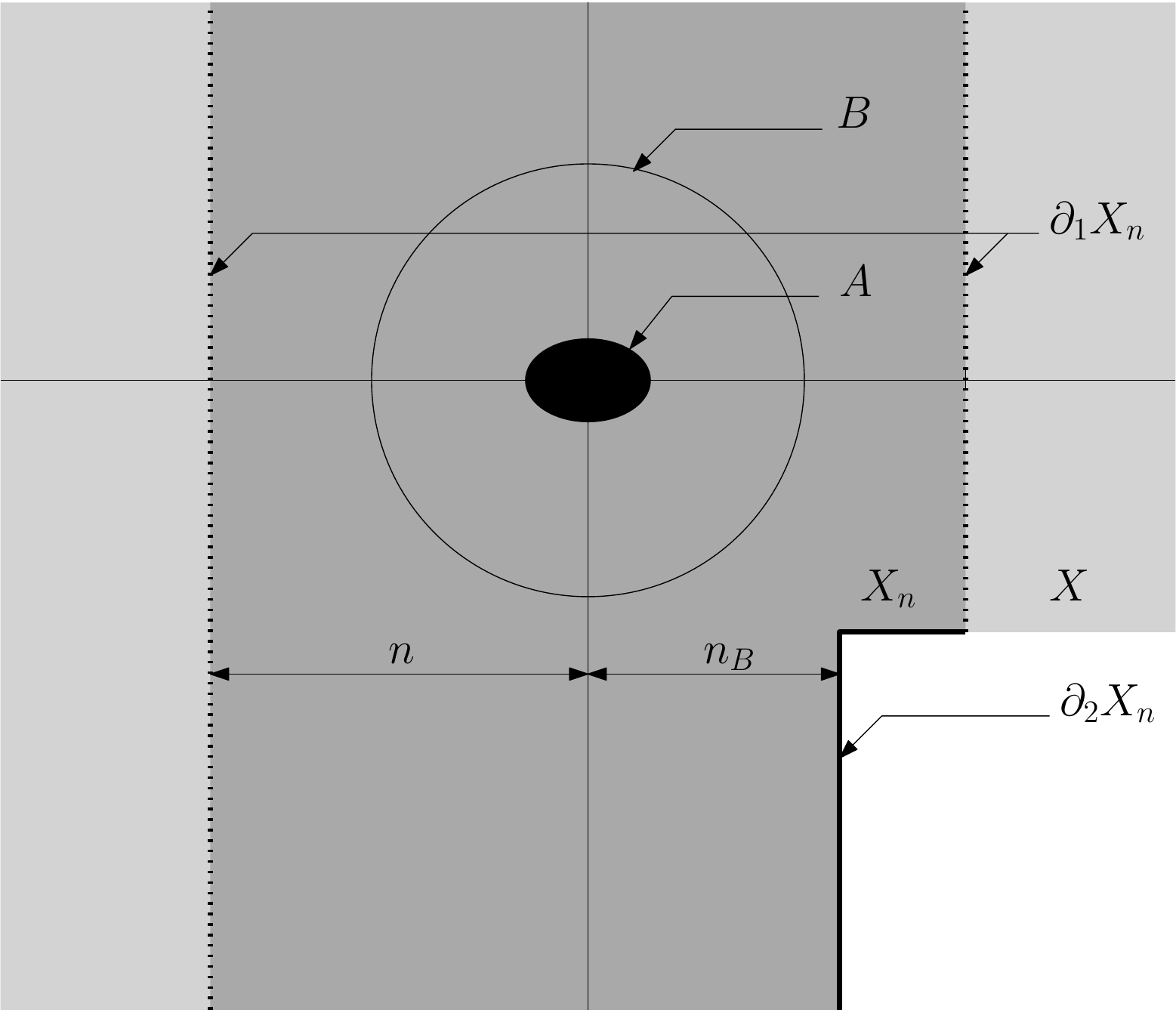}
       \caption{Planar pictures corresponding to $X$,
         $X_n$, $\partial_1X_n$ and $\partial_2X_n$}
       \label{fig:butterfly}
     \end{figure}
     Since the sequence of events $\left ( A
       \lr[X_n] \partial_1{X_n} \right )_{n> n_B + R_S}$ is
     decreasing, we have
    \begin{align}
     \lim_{n \to \infty} \Pp{ A
       \lr[X_n] \partial_1{X_n}}&=\Pp{\bigcap_{n > n_B + R_S} \left
         ( A \lr[X_n] \partial_1{X_n}\right )} \notag\\
     & \leq \Pp{ A \lr[X] \infty } \notag\\
     & = \Pp{ \left ( A \lr[X] \infty \right) \cap \left(  A
         \lr[X] \partial X\right)  }.\label{eq:13}
    \end{align}
    (The last equality results from the fact that the infinite set $V
    \setminus X$ intersects the infinite cluster almost surely.)
    \vspace{0.2 cm}

     The sequence $\left  ( A \lr[X_n] \partial_2{X_n} \right
    )_{n > n_B + R_S}$ is increasing, hence we have
    \begin{align}
     \lim_{n \to \infty} \Pp{ A
       \lr[X_n] \partial_2{X_n}}&=\Pp{\bigcup_{n > n_B + R_S} \left
         ( A \lr[X_n] \partial_2{X_n}\right )} \notag\\
     & = \Pp{ A \lr \partial X }. \label{eq:14}
    \end{align}
    Since $p\in (0,1)$ and $A$ is finite, the probability that $A$ is connected
    to $\partial X$ but intersects only finite clusters is positive.
    Thus the following strict inequality holds
    \begin{equation}
      \Pp{ \left ( A \lr[X] \infty \right) \cap \left(  A
         \lr[X] \partial X \right)  } < \Pp{ A \lr \partial X }. \label{eq:15}
    \end{equation}
     From \eqref{eq:13}, \eqref{eq:14} and \eqref{eq:15}, we can pick
     $n_1 > n_B+R_S$ large enough such that,
     for all $n \geq n_1$,
     \begin{equation}
              \Pp{A \lr[X_n] \partial_1{X_n}} < \Pp{ A
         \lr[X_n] \partial_2{X_n}}.
     \end{equation}
     Fix $n \geq n_1$ and  $h \geq 0$, then define $\ell=\ell_B(n,h)$.
     For these parameters, we have $ A\subset C(n,h,\ell) \subset
         X_n$ and $L \kern -1.7pt R(n,h,\ell)
         \subset \partial_1X_n $, which  gives
     \begin{align*}
       \Pp{A \lr[C(n,h,\ell)] L \kern -1.7pt R(n,h,\ell)} &
       \leq
       \Pp{A \lr[X_n] \partial_1X_n} \\
       &< \Pp{A \lr[X_n] \partial_2X_n} \\
       & \leq
       \Pp{A \lr[C(n,h,\ell)] U \kern -3pt D(n,h,\ell)}.
     \end{align*}
     The last inequality follows from the observation that each path
     connecting $A$ to $\partial_2X_n $ inside $X_n$ has to cross $U
     \kern -3pt D(n,h,\ell)$.

     The computation above shows that the following srict inequality
     holds for $n \geq n_1$, $h \geq 0$, and $\ell=\ell_B(n,h)$
     \begin{equation}\label{eq:16}
     \Pp{A \lr[C(n,h,\ell)] L \kern -1.7pt R(n,h,\ell)} < \Pp{A
       \lr[C(n,h,\ell)]  U \kern -3pt D(n,h,\ell)}.
     \end{equation}
      In the same way, we find $n_2$ such that for all $n\geq n_2$ and $h
     \leq 0$, equation~\eqref{eq:16} holds for $\ell=\ell_B(n,h)$. Taking
     $n=\max(n_1,n_2)$ ends the proof of the fact.
   \end{proof}

   In the rest of the proof, we fix $n$ as in the previous fact.
   For $ h \in \R$, define
   \begin{equation}
     \ell_{\mathrm{eq}}(h) =
     \begin{array}[t]{ll}
       \sup \Big \{ \ell\geq \ell_B(n,h)-1 \: : \: &\Pp{
         A \lr[C(n,h,\ell)] U \kern -3pt D(n,h,\ell)} \\
        &\geq \Pp{  A \lr[C(n,h,\ell)] L \kern -1.7pt R(n,h,\ell)} \Big \}.
     \end{array}\\
   \end{equation}
   \begin{fact}\label{lemBinCbis}
     For all $h \in \R$, the quantity $\ell_{\mathrm{eq}}(h)$ is finite.
   \end{fact}
   \begin{proof}[Proof of fact~\ref{lemBinCbis}]
     We fix $h\in \R$ and use the same
     technique as developed in the proof of the fact~\ref{lemBinC}.
     Define
     \begin{align*}
       &Y= \gr{[-n,n]\times \R}\\
        &\partial Y= \gr{\{-n,n\}\times \R}
     \end{align*}
     In the same way we proved equations \eqref{eq:13} and
     \eqref{eq:14}, we have here
     \begin{align}
       \lim_{\ell \to \infty} \Pp{A \lr[C(n,h,\ell)] U \kern -3pt
         D(n,h,\ell)} &=
       \Pp{A \lr[Y] \infty}\\
       \lim_{\ell \to \infty} \Pp{A \lr[C(n,h,\ell)] L \kern -1.7pt
         R(n,h,\ell)} &= \Pp{A \lr \partial Y}
     \end{align}
     Thus, we can find a finite $\ell$ large enough such that
     \[
     \Pp{A \lr[C(n,h,\ell)] U \kern -3pt D(n,h,\ell)} <
     \Pp{A \lr[C(n,h,\ell)] L \kern -1.7pt R(n,h,\ell)}.
     \]
   \end{proof}

   The quantity $\ell_{\mathrm{eq}}$ plays a central role in our proof,
   linking geometric and probabilistic estimates.
   We can apply Lemma~\ref{lemFKG} with the two events appearing in 
   inequality~\eqref{eq:12}, to obtain the following alternative:
   \begin{subequations}
     \begin{align}
       \text{If }\ell < \ell_{\mathrm{eq}}(h) \text{,}& \quad
       \text{then } \Pp{ A
         \lr[C(n,h,\ell)] U \kern -3pt D(n,h,\ell)}> 1-\varepsilon^{12}.\label{eq:17}\\
       \text{If } \ell > \ell_{\mathrm{eq}}(h) \text{,}& \quad
       \text{then } \Pp{ A \lr[C(n,h,\ell)] L \kern -1.7pt
         R(n,h,\ell)}> 1-\varepsilon^{12}.\label{eq:18}
     \end{align}
   \end{subequations}
  
   Fix $(h_{\mathrm{opt}},\ell_0)\in \R\times\R_+$ such that
   \begin{equation}
     \label{eq:19}
     \ell_{\mathrm{eq}}(h_{\mathrm{opt}})< \ell_0 < \inf_{h\in \R} \left ( \ell_{\mathrm{eq}}(h) \right )+\tfrac16 .
   \end{equation}
   With such notation, we derive from \eqref{eq:18}
   \begin{equation}
     \Pp{A\lr[C(n,h_{\mathrm{opt}},\ell_0)]L \kern -1.7pt
       R(n,h_{\mathrm{opt}},\ell_0)}> 1-\varepsilon^{12}.
   \end{equation}
   Another application of Lemma~\ref{lemFKG} ensures then the
   existence of a real number $h_0$ of the form
   $h_0=h_{\mathrm{opt}}+\sigma \ell_0/3$ (for $\sigma \in
   \{-2,0,+2\}$) such that

\begin{equation}
  \Pp{A \lr[C(n,h_{\mathrm{opt}},\ell_0)] L \kern -1.7pt R(n,h_0,\ell_0/3)} > 1-\varepsilon^4.
\end{equation}
Recall that $L \kern -1.7pt R(n,h_0,\ell_0/3)=L(a_0,b_0)\cup
L(-a_0,-b_0)$ with $a_0=(n,h_0-\ell_0/3)$ and $b_0=(n,h_0+\ell_0/3)$.
By symmetry, the set $A$ is connected inside $C(n,h_0,\ell_0/3)$ to
$L(a_0,b_0)$ and to $L(-a_0,-b_0)$ with equal probabilities. Applying
again Lemma~\ref{lemFKG} gives
\begin{equation}
   \Pp{ A \lr[C(n,h_{\mathrm{opt}},\ell_0)] L(a_0,b_0)}> 1-\varepsilon^2.
\end{equation}
Then, use Lemma~\ref{lem:connections} to split $L(a_0,b_0)$ into two
parts that both have a good probability to be connected to $A$: we can
pick $u=(n,h)\in [a_0,b_0]$ such that both \[\Pp{ A
  \lr[C(n,h_{\mathrm{opt}},\ell_0)] L(a_0,u)}\text{ and }\Pp{ A
  \lr[C(n,h_{\mathrm{opt}},\ell_0)] L(u,b_0)}\] exceed
$1-\varepsilon$. Finally, pick $\ell$ such that
$\ell_{\mathrm{eq}}(h)-\tfrac16<\ell<\ell_{\mathrm{eq}}(h)$.
Define
$a=u+(0,-\ell)$ and $b=u+(0,\ell)$. In particular, we have
$u=\frac{a+b}{2}$. 
Our choice of $\ell_0$ (see equation~\eqref{eq:19})
implies that $\ell > \ell_0-\tfrac13\ge  \tfrac23 \ell_0$, and the following inclusions hold:
\begin{align}
  & L(a_0,u)\subset L(a,u)\\
  & L(u,b_0)\subset L(u,b)\\
  & C(n,h_{\mathrm{opt}},\ell_0) \subset R(a,b)
\end{align}
These three inclusions together with
the estimates above conclude the point~(\ref{item:3}) of
Lemma~\ref{lem:geometric} for $Z=  L(a,u)$ and $Z=L(u,b)$.

Now, let us construct a suitable vector $v\in[-a,b]$ such that the
point~(\ref{item:3}) of Lemma~\ref{lem:geometric} is
verified for $Z=L(-a,v)$ and $Z=L(v,b)$. Equation  \eqref{eq:17} implies that
\begin{equation}
  \Pp{ A
    \lr[C(n,h,\ell)] U \kern -3pt D(n,h,\ell)}> 1-\varepsilon^{12}.
\end{equation}

As above, using $U \kern -3pt D(n,h,\ell) = L(-a,b) \cup L(-b,a)$,
symmetries and Lemma~\ref{lemFKG}, we obtain
\begin{equation}
  \Pp{ A \lr[C(n,h,\ell)] L(-a,b)}> 1-\varepsilon^{6}.
\end{equation}
By Lemma~\ref{lem:connections}, we can pick $v\in[-a,b]$ such that the
following estimate holds for $Z= L(-a,v), \,L(v,b)$:
\begin{equation}
    \Pp{ A \lr[C(n,h,\ell)] Z}> 1-\varepsilon^{3} \ge 1-\varepsilon.
\end{equation}

 It remains to verify
the point~(\ref{item:2}).
 For  $Z=L(a,u),L(u,b)$, it follows from $n>n_B$
 and the definition of $n_B$, see equation~\eqref{eq:10}. For
 $Z=L(-a,v), L(v,b)$, it follows from $\ell > \ell_B(n,h)-1$ (see
 Fact~\ref{lemBinC}) and the definition of $ \ell_B(n,h)$.
\end{proof}

\subsection{Construction of Good Blocks}

\label{sec:good-blocks}

In this section, we will define a finite block together with a local
event that ``characterize'' supercritical percolation --- in the sense
that the event happening on this block with high probability will
guarantee supercriticality. This block will be used in
section~\ref{sec:renormalization} for a coarse graining argument.

In Grimmett and Marstrand's proof of
Theorem~\ref{sec:slabs-conv-symm}, the coarse graining argument uses
``seeds'' (big balls, all the edges of which are open) in order to
propagate an infinite cluster from local connections. More precisely,
they define an exploration process of the infinite cluster: at each
step, the exploration is succesful if it creates a new seed in a
suitable place, from which the  process can iterate. If the
probability of success at each step is large enough, then, with
positive probability, the exploration process does not stop and an
infinite cluster is created.

In their proof, the seeds grow in the unexplored region. Since we
cannot control this region, we use the explored region to produce
seeds instead. Formally, long finite self-avoiding paths will play the
role of the seeds in the proof of Grimmett and Marstrand. The idea is
the following: if a point is reached at some step of the exploration
process, it must be connected to a long self-avoiding path, which is
enough to iterate the process.

\begin{lemma}
  \label{lem:path}
  For all $\varepsilon>0$, there exists $m\in\N$ such that, for any fixed
  self-avoiding path $\gamma$ of length $m$,
  \begin{equation}
    \Pp{ \gamma \lr{} \infty} >1-\varepsilon.
  \end{equation}
\end{lemma}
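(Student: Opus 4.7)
The plan is to argue by contradiction, using transitivity and local finiteness of $\mathcal{G}$ together with the second part of Lemma~\ref{lem:perco}. Suppose the conclusion fails: there exists $\varepsilon > 0$ such that, for every $m \in \N$, one can find a self-avoiding path $\gamma_m$ of length $m$ with $\Pp{\gamma_m \lr \infty} \leq 1-\varepsilon$. Translating by an element of the underlying group, I may assume that each $\gamma_m$ starts at the origin $0$.

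Next I would extract an infinite self-avoiding limiting path. Since $\mathcal{G}$ has finite degree, a standard pigeonhole-based diagonal extraction produces a subsequence $(\gamma_{m_k})_{k \geq 0}$ and a sequence of vertices $w_0 = 0, w_1, w_2, \ldots$ such that, for every $k$, the first $k+1$ vertices of $\gamma_{m_k}$ are $w_0, w_1, \ldots, w_k$. Because each $\gamma_m$ is self-avoiding, so is the infinite sequence $\gamma_\infty := (w_n)_{n \geq 0}$; in particular the vertex set $W := \{w_n : n \geq 0\}$ is infinite.

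Finally I would pass to the limit in the probability estimates. For any fixed $n$ and any $k \geq n$, the prefix $\{w_0,\ldots,w_n\}$ is contained in the vertex set of $\gamma_{m_k}$, so the event that none of $w_0,\ldots,w_n$ lies in the infinite cluster contains the complement of $\{\gamma_{m_k} \lr \infty\}$, and therefore has probability at least $\varepsilon$. Letting $n \to \infty$ and using decreasing continuity of the measure,
\begin{equation}
\Pp{\,\text{no vertex of } W \text{ lies in the infinite cluster}\,} \geq \varepsilon,
\end{equation}
which contradicts the second part of Lemma~\ref{lem:perco} applied to the infinite set $W \subseteq V$.

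The proof has essentially one non-trivial step, the diagonal extraction, which is entirely routine given that $\mathcal{G}$ is locally finite; the rest is a matter of pushing the uniform probability bound through the limit. The conceptual role of this lemma, as emphasized in the paper's overview, is that arbitrarily long self-avoiding open paths can substitute for the seeds used by Grimmett and Marstrand in the renormalization of Section~\ref{sec:renormalization}.
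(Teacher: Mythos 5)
Your proof is correct and follows essentially the same route as the paper: both arguments reduce to paths started at the origin by translation invariance, diagonally extract an infinite self-avoiding path from a sequence of finite ones, and then invoke the second part of Lemma~\ref{lem:perco} to conclude that this infinite vertex set meets the infinite cluster almost surely. The only cosmetic difference is that the paper phrases the argument directly by working with probability-minimizing paths $\gamma^{(k)}$ of each length, whereas you frame it as a proof by contradiction using a sequence of putative counterexamples; these are interchangeable ways of handling the universal quantifier over paths.
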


\begin{proof}
  By translation invariance we can restrict ourselves to self-avoiding
  paths starting at the origin $0$. Fix $\varepsilon>0$. For all $k\in
  \N$ we consider one self-avoiding path $\gamma^{(k)}$ starting at
  the origin that minimizes the probability to intersect the infinite
  cluster among all the self-avoiding paths of length $k$:
  \begin{equation}
    \Pp{\gamma^{(k)}\lr\infty}=\min_{\gamma :\,\mathrm{length}(\gamma)=k}\Pp{\gamma\lr\infty}.
  \end{equation}
  By diagonal extraction, we can consider an infinite self-avoiding
  path $\gamma^{(\infty)}$ such that, for any $k_0\in\N$,
  $\left(\gamma^{(\infty)}_0,\gamma^{(\infty)}_1,\ldots,\gamma^{(\infty)}_{k_0}
  \right )$ is the beginning of infinitely many $\gamma^{(k)}$'s. By
  Lemma~\ref{lem:perco}, $\gamma^{(\infty)}$ intersects almost surely
  the infinite cluster of a $p$-percolation. Thus, there exists an
  integer $k_0$ such that
  \begin{equation}
      \Pp{\left\{\gamma^{(\infty)}_0,\gamma^{(\infty)}_1,\ldots,\gamma^{(\infty)}
        _{k_0}\right\}\lr{}\infty} > 1-\varepsilon.
  \end{equation}
  Finally, there exists $m$ such that $\gamma_m$ begins with the
  sequence $$(\gamma^{(\infty)}_0,\gamma^{(\infty)}_1,\ldots,\gamma^{(\infty)}_{k_0}),$$
  thus it intersects the infinite cluster of a $p$-percolation with
  probability exceeding $1-\varepsilon$. By choice of $\gamma^{(m)}$,
  it holds for any other self-avoiding path $\gamma$ of length $m$
  that
  \begin{equation}
    \Pp{\gamma \lr \infty} >1-\varepsilon.
  \end{equation}
\end{proof}

We will focus on paths that start close to the origin. Let us define
$\mathcal{S}(m)$ to be the set of self-avoiding paths of length
$m$ that start in $B(1)$.

\begin{lemma}
  \label{lemgeom1}
  For any $\eta>0$, there exist two integers $m, N \in \N$ and a good
  quadruple $(a,b,u,v)$ such that
  \begin{equation}
    \forall  \gamma \in \mathcal{S}(m),\,\forall Z \in \mathcal{Z}(a,b,u,v)\quad
    \Pp{\gamma \lr[R(a,b)\cap B(N)] Z\cap B(N)}>1-3\eta.
  \end{equation}
\end{lemma}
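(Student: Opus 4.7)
The strategy is to combine Lemma~\ref{lem:path} (self-avoiding paths of length $m$ connect to infinity with high probability) with Lemma~\ref{lem:geometric} (a set connected to infinity with high probability is joined through the parallelogram $R(a,b)$ to each piece $Z \in \mathcal{Z}$), bridging the two by the uniqueness of the infinite cluster. The point is that applying Lemma~\ref{lem:geometric} to a single large symmetric set $A$ produces one universal good quadruple $(a,b,u,v)$, and the uniqueness of the infinite cluster will let us "re-route" the resulting connection so that it starts from any fixed short path $\gamma$ inside $A$ rather than from $A$ itself.

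First, I would apply Lemma~\ref{lem:path} to fix $m$ such that every self-avoiding path of length $m$ satisfies $\Pp{\gamma \lr \infty} > 1-\eta$; in particular this bound holds for every $\gamma \in \mathcal{S}(m)$, and all such $\gamma$ lie in $B(m+1)$. Next, choose $k \ge m+1$ large enough that Lemma~\ref{lem:perco} gives $\Pp{B(k) \lr \infty} > 1 - \eta^{24}$. Since $B(k)$ is symmetric and contains $0$, Lemma~\ref{lem:geometric} applied with $A = B(k)$ and $\varepsilon = \eta$ produces a good quadruple $(a,b,u,v)$ --- the one that will serve simultaneously for every $\gamma$ --- with $B(k) \cap Z = \emptyset$ and $\Pp{B(k) \lr[R(a,b)] Z} > 1 - \eta$ for every $Z \in \mathcal{Z}(a,b,u,v)$. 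Because the events $\{B(k) \lr[R(a,b) \cap B(N)] Z \cap B(N)\}$ increase to $\{B(k) \lr[R(a,b)] Z\}$ as $N \to \infty$, I then pick $N$ large enough that $\Pp{B(k) \lr[R(a,b) \cap B(N)] Z \cap B(N)} > 1 - \eta$ for every $Z$.

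To transfer the connection from the large source $B(k)$ to an individual path $\gamma \in \mathcal{S}(m)$, I would invoke the uniqueness of the infinite cluster (Lemma~\ref{lem:perco}). On the intersection of the two high-probability events $\{\gamma \lr \infty\}$ and $\{B(k) \lr[R(a,b) \cap B(N)] Z \cap B(N)\}$, both $\gamma$ and the open path realizing the latter event lie in the a.s.~unique infinite cluster, so $\gamma$ is connected to $Z \cap B(N)$ in $\mathcal{G}$. A union bound then gives the target lower bound $1 - 3\eta$: one $\eta$ for $\{\gamma \lr \infty\}$, one for the event supplied by Lemma~\ref{lem:geometric} localized to $B(N)$, and one $\eta$ absorbing the cost of the localization step.

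The hard part is exactly this last localization: uniqueness provides a connection between $\gamma$ and $Z \cap B(N)$ \emph{somewhere} in $\mathcal{G}$, whereas we need one confined to $R(a,b) \cap B(N)$. To close this gap I would exploit the geometric slack between the inner parallelogram $[a,b,-a,-b]$ (on whose edges the sets $Z$ live) and the ambient $R(a,b) = \gr{[3a,3b,-3a,-3b]}$ --- the factor three in the definition of $R$ is precisely the buffer that absorbs boundary effects --- together with a finite-volume uniqueness argument: for $N$ large, the restriction to $R(a,b) \cap B(N)$ of the infinite cluster already contains a single "giant" crossing component meeting both $\gamma \subseteq B(m+1)$ and each $Z \cap B(N)$. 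This controls the missing piece by a probability one can make smaller than $\eta$ by choosing $N$ further out, completing the proof.
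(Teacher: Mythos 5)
Your overall architecture matches the paper's (Lemma~\ref{lem:path} for $m$, Lemma~\ref{lem:geometric} applied to $A=B(k)$ for the quadruple, a uniqueness argument to transfer the connection from $B(k)$ to $\gamma$, and a three-term union bound), and you correctly identify that the hard point is confining the $\gamma$-to-$Z$ connection to $R(a,b)$. But two things go wrong. First, the intermediate claim that ``the open path realizing $\{B(k)\lr[R(a,b)\cap B(N)]Z\cap B(N)\}$ lies in the unique infinite cluster'' is false: that event only supplies some finite open path, which need not touch the infinite cluster, so global uniqueness gives you nothing here. Second, and more importantly, your proposed repair cannot be carried out in the order you set things up. You fix the good quadruple using only $B(k)\cap Z=\emptyset$ and then hope to find, for $N$ large, a single crossing component of $R(a,b)\cap B(N)$ meeting both $\gamma$ and $Z$; but the $Z$'s may sit arbitrarily close to $B(k)$, leaving no room for a uniqueness region separating them, and enlarging $N$ does not help since $N$ only truncates the ambient ball, not the geometry between $B(k)$ and $Z$.

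The missing idea is a \emph{local} uniqueness annulus chosen \emph{before} the quadruple. The paper first picks $n$ with $\Pp{B(k)\lr[!B(n+1)!]B(n)^c}>1-\eta$ (using that the number of disjoint clusters of the configuration restricted to $B(n+1)$ joining $B(k)$ to $B(n)^c$ decreases to the number of infinite clusters meeting $B(k)$, which is at most one a.s.), and only then applies Lemma~\ref{lem:geometric} with $A=B(k)$ and $B=B(n+1)$. Item~(\ref{item:2}) of that lemma then forces every $Z$ to avoid $B(n+1)$ and forces $B(n+1)\subset R(a,b)$, so the annulus $B(n+1)\setminus B(k)$ genuinely separates $B(k)$ from each $Z$ and lies inside $R(a,b)$. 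On the intersection of the three events, the initial segment of the infinite path from $\gamma\subset B(m+1)\subset B(k)$ and the initial segment of the $B(k)$-to-$Z$ path are both crossing clusters of $B(n+1)$, hence identical, which yields $\gamma\lr[R(a,b)]Z$; the bound $1-3\eta$ and the limit in $N$ then finish the proof. Without choosing $n$ first and feeding $B(n+1)$ into Lemma~\ref{lem:geometric}, this separation is unavailable and your localization step does not close.
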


\begin{proof}
  By Lemma~\ref{lem:path}, we can pick $m$ such that any self-avoiding
  path $\gamma \in \mathcal S(m)$ verifies
  \begin{equation}
    \Pp{ \gamma \lr \infty } > 1-\eta.
  \end{equation}
  Pick $k\geq m+1$ such that
  \begin{equation}
    \Pp{ B(k)\lr \infty } > 1- \eta^{24}.
  \end{equation}
  The number of disjoint clusters (for the configuration restricted to
  $B(n+1)$) connecting $B(k)$ to $B(n)^c$ converges when
  $n$ tends to infinity to the number of infinite clusters
  intersecting $B(k)$. The infinite cluster being unique, we can pick
  $n$ such that
  \begin{equation}
    \Pp{ B(k) \lr[!B(n+1)!] B(n)^c } > 1-\eta.\label{eq:21}
  \end{equation}
  Applying Lemma~\ref{lem:geometric} with $A=B(k)$ and $B=B(n+1)$
  provides a good quadruple $(a,b,u,v)$ such that the following two
  properties hold for any $Z \in \mathcal{Z}(a,b,u,v)$:
  \begin{enumerate}[(i)]
  \item $B(n+1) \cap Z =\emptyset$, \label{item:4}
  \item $ \Pp{B(k) \lr[R(a,b)] Z} > 1-\eta$\label{item:5}.
  \end{enumerate}
    Note that condition~(\ref{item:4}) implies in
  particular that $B(n+1)$ is a subset of $R(a,b)$.
  Equation~\eqref{eq:21} provides with high probability a ``uniqueness
  zone'' between $B(k)$ and $B(n)^c$: any pair of open paths
  crossing this region must be connected inside $B(n+1)$. In
  particular, when $\gamma$ is connected to infinity, and $B(k)$ is
  connected to $Z$ inside $R(a,b)$, this ``uniqueness zone'' ensures
  that $\gamma$ is connected to $Z$ by an open path lying inside
  $R(a,b)$:
\begin{align}
  &\Pp{\gamma \lr[R(a,b)] Z   }\\
  &\quad
    \geq \Pp{
              \left \{ \gamma \lr \infty \right \}
        \cap \left \{B(k) \lr[!B(n+1)!] B(n)^c \right \}
        \cap \left \{B(k) \lr[R(a,b)] Z \right \}  } \\
  &\quad >1-3\eta.
\end{align}
The identity
\begin{equation}
  \Pp{\gamma \lr[R(a,b)] Z}=\lim_{N\to \infty}\Pp{\gamma \lr[R(a,b)\cap B(N)] Z\cap B(N)}
\end{equation}
concludes the proof of Lemma~\ref{lemgeom1}.
\end{proof}

\subsection{Construction of a finite-size criterion}
\label{sec:stab-lemma-under}

In this section, we give a precise definition of the finite-size criterion
$\mathscr{FC}(p,N,\eta)$ used in lemmas~\ref{lem:finite-criterion} and
\ref{lem:renormalization}. Its construction is based on
Lemma~\ref{lemgeom1}. 

Recall that, up to now, we worked with a fixed orthonormal basis
$\mathbf e$, which was hidden in the definition of
$\mathsf{Graph}=\mathsf{Graph}_{\mathbf{e}}$, see
equation~\eqref{eq:5}. In order to perform the coarse graining
argument in any marked group $G^\bul/\Lambda$ close to $G^\bul$, we
will need to have the conclusion of Lemma~\ref{lemgeom1} for all the
orthonormal bases.

Denote by $\mathfrak B$ the set of the orthonormal basis of
$\R^r$. It is a compact subset of $\R^{r\times r}$. If we
fix $X\subset \R\textsc{}^2$, a positive integer $N$ and $\mathbf{e}\in\mathfrak B$
then the following inclusion holds for any orthonormal basis
$\mathbf{f}$ close enough to $\mathbf e$ in $\mathfrak B$:
\begin{equation}
  \label{eq:22}
   \gre[e]{X} \cap B(N)  \subset \left(\gre[f]{X}+B(1)\right)\cap B(N).
\end{equation}
We define $\mathcal{N}({\mathbf e},N)\subset \mathfrak{B}$ to be the
neighbourhood of $\mathbf e$ formed by the orthonormal bases $\mathbf f$ for
which the inclusion above holds. A slight
modification of the orthonormal basis in Lemma~\ref{lemgeom1} keeps its
conclusion with the same integer $N$ and the same vectors $a,b,u,v$, but
with
\begin{itemize}
\item $Z+B(1)$ in place of $Z$
\item  and $R(a,b)+B(1)$ instead of $R(a,b)$.
\end{itemize}

In order to state this result properly, let us define:
\begin{align}
  &\mathcal Z_{N,\mathbf e}(a,b,u,v) =\{(Z+B(1))\cap B(N):\:Z\in
  \mathcal
  Z_{\mathbf e}(a,b,u,v)\};\\
  &R_{N,\mathbf e}(a,b)=(R(a,b)+B(1))\cap B(N).
\end{align}
Note that we add the subscript $\mathbf e$ here to insist on the
dependence in the basis $\mathbf e$. This dependence was implicit for
the sets $Z$ and $R(a,b)$ which were defined via the function
$\mathsf{Graph}$.

We are ready to define the finite size criterion
$\mathscr{FC}(p,N,\eta)$ that appears in
lemmas~\ref{lem:finite-criterion} and \ref{lem:renormalization}.

  \begin{definition}
    Let $N\ge 1$ and $\eta>0$. We say that the finite size criterion
    $\mathscr{FC}(p,N,\eta)$ is satisfied if for any $\mathbf e \in
    \mathfrak B$, there exist $m\ge 1$ and a good quadruple $(a,b,u,v)$
    such that:
   \begin{equation}
   \label{eq:20}
     \forall \gamma\in\mathcal{S}(m),\,\forall
      Z\in\mathcal{Z}_{N,\mathbf e}(a,b,u,v),\quad \Pp { \gamma
        \lr[R_{N,\mathbf e}(a,b)] Z}> 1-\eta .
    \end{equation}
  \end{definition}

\begin{proof}[\bf Proof of Lemma~\ref{lem:finite-criterion}]
  Let $\eta>0$. Given $\mathbf{e}$ an orthonormal basis,
  Lemma~\ref{lemgeom1} provides $m_{\mathbf e}, N_{\mathbf e} \in \N$,
  and a good quadruple $(a_{\mathbf e}, b_{\mathbf e},u_{\mathbf{e}}, v_{\mathbf
    e})$ such that the following holds (we omit the subscript for the
  parameters $m,a,b,u,v$):
 \begin{equation}
    \forall \gamma\in\mathcal{S}(m),\,\forall
   Z\in\mathcal{Z}_{\mathbf e}(a,b,u,v),\quad \Pp { \gamma \lr[R_{\mathbf e}(a,b) \cap B(N_{\mathbf
       e})] Z\cap B(N_{\mathbf e})}> 1-\eta.
\end{equation}
For any $\mathbf f \in \mathcal N(\mathbf e,N_{\mathbf e})$, we can use
inclusion~\eqref{eq:22} to derive from the estimate above that
for all $\gamma\in\mathcal{S}(m)$ and $Z \in\mathcal{Z}_{\mathbf f}(a,b,u,v)$,
\begin{equation}
   \Pp { \gamma \lr[(R_{\mathbf f}(a,b)+B(1)) \cap
     B(N_{\mathbf e})]
     (Z+B(1))\cap B(N_{\mathbf e})}> 1-\eta.
\end{equation}
 By compactness of $\mathcal B$, we
  can find a finite subset $\mathcal F\subset \mathcal B$ of bases such that
  \begin{equation}
      \mathcal B = \bigcup_{\mathbf e\in \mathcal F}
    \mathcal{N}({\mathbf e},N_{\mathbf e}).
  \end{equation}
 For  $\displaystyle N:=\max_{\mathbf{e}\in \mathcal B_f}N_{\mathbf
    e}$, the finite-size criterion $ \mathscr{FC}(p,N,\eta)$ is satisfied.

\end{proof}

\section{Proof of Lemma~\ref{lem:renormalization}}
\label{sec:renormalization}
Through the entire section, we fix:
\begin{itemize}
\item[-] $\grp^\bul \in \tilde\Grp$ a marked abelian group of rank
greater than two,
\item[-] $p\in (\mathrm{p_c}^{\!\!\!\bul}(G^\bul),1)$,
\item[-] $\delta>0$.
\end{itemize}
Let $\mathcal{G}=(V,E)$ denote the Cayley graph
associated to $\grp^\bul$.

\subsection{Hypotheses and notation}
Let us start by an observation that follows from the
definition of good quadruple at the beginning of
section~\ref{sec:geom-cons}: there exists an absolute constant $\kappa$ such that
for any good quadruple $(a,b,u,v)$ and any $w\in \R^2$,
\begin{equation}
    \mathrm{Card} \left\{z\in \Z^2 \::\: w + z_1 u + z_2 v\in [5a,5b,-5a,-5b]
    \right\} \le \kappa.
\end{equation}
We fix $\kappa$ as above and choose $\eta>0$ such that
\begin{equation}
  \label{eq:23}
  p_0:=\sup_{t\in\N}\left\{1-
    (1-\delta/\kappa)^t+\eta(1-p)^{-t}\right\}>\mathrm{\mathrm{p_c}^{\!\!\!
      site}}(\Z^2).
\end{equation}
We will prove that this choice of $\eta$ provides the conclusion of
Lemma~\ref{lem:renormalization}. 
We assume that $G^\bul$ satisfies $\mathscr{FC}(p,N,\eta)$ for some
positive integer $N$ (which will be fixed throughout this section). 
Let us consider a marked abelian group $H^\bul=G^\bul/\Lambda$ of rank
at least $2$ and such that
\begin{equation}
 \Lambda \cap B(2N+1)= \{0\}.
\end{equation}
(Notice that such $H^\bul$'s form a neighbourhood of $G^\bul$ in
$\tilde{\cal G}$ by Proposition~\ref{prop:conv-seq}.) 
Under these hypotheses, we will prove that
$\mathrm{p_c}(H^\bul)<p+\delta$, providing the conclusion of
Lemma~\ref{lem:renormalization}.

The Cayley graph of $H^\bul=\grp^\bul/ \Lambda$ is denoted by
$\overline{\mathcal{G}}=(\overline V,\overline E)$. For $x\in V$, we
write $\bar x$ for the image of $x$ by the quotient map $G\to G/\Lambda$.
This quotient map naturally extends  to subsets of $V$ and we write
$\overline{A}$ for the image of a set $A\subset V$.

\subsection{Sketch of proof}
\label{sec:sketch-proof}
Under the hypotheses above, we show that percolation occurs in
$\overline{\mathcal G}$ at parameter $p+\delta$. The proof goes as follows.
\begin{description}
\item[\bf Step 1: Geometric construction.] We will construct a
  renormalized graph, that is a family of big boxes (living in
  $\overline{\mathcal G}$) arranged as a square lattice. In
  particular, there will be a notion of neighbour boxes. The occurence
  of the finite-size criterion $\mathscr{FC}(p,N,\eta)$ will imply
  good connection probabilities between neighbouring boxes. This is the
  object of Lemma~\ref{sec:probabilist-setting}.
\item[\bf Step 2: Construction of an infinite cluster.] The
  renormalized graph built in the first step will allow us to couple a
  $(p+\delta)$-percolation on $\overline{\mathcal G}$ with a
  percolation on $\Z^2$ in such a way that the existence of an
  infinite component in $\Z^2$ would imply an infinite component in
  $\overline{\mathcal G}$. This event will happen with positive
  probability. The introduction of the parameter $\delta$ will allow
  us to apply a ``sprinkling'' technique in the coupling argument
  developed in the proof of Lemma~\ref{sec:probabilist-setting-1}.
\end{description}

\subsection{Geometric setting: boxes and corridors}

 Since $\Lambda$ has corank at least $2$, we can fix an orthonormal
  basis $\mathbf e\in \mathcal B$ such that
  \begin{equation}
    \label{eq:24}
    \Lambda\subset \mathrm{Ker}\left(\pi_{\mathbf e}\right) \times T.
  \end{equation}
  Condition~\eqref{eq:24} ensures that sets defined in $\mathcal G$
  via the function $\mathsf{Graph}_{\mathbf e}$ have a suitable image
  in the quotient $\overline{\cal G} $. More precisely, for any $x\in
  V$ and any planar set $X\subset \R^2$, we have
  \begin{equation}
    x\in \gre{X} \iff \overline x \in \overline{\gre{X}}\label{eq:25}.
  \end{equation}
  According to $\mathscr{FC}({p,N,\eta})$, there exists $m<N$
  and a good quadruple $(a,b,u,v)$ such that
 \begin{equation}
   \forall \gamma\in\mathcal{S}(m),\,\forall Z\in\mathcal{Z}_{N,\mathbf
     e}(a,b,u,v),\quad \Pp { \gamma \lr[R_{N,\mathbf e}(a,b)]
     Z }> 1-\eta .
 \end{equation}
 We introduce here some subsets of $\overline{\mathcal{G}}$, that will
 play the role of vertices and edges in the renormalized graph.
 
 \emph{Box}. For $z$ in $\Z^2$, define
 \begin{equation}
   B_z:=\overline{\gr{z_1 u +z_2 v + [a,b,-a,-b] }}.
 \end{equation}
 When $z$ and $z'$ are neigbours in $\Z^2$ for the standard graph
 structure, we write $z\sim z'$. In this case, we say that the two boxes
 $B_z$ and $B_{z'}$ are \emph{neighbours}.
 
 \emph{Corridor}. For $z$ in $\Z^2$, define
 \begin{equation}
   C_z:=\overline{\gr{z_1 u +z_2 v + [4a,4b,-4a,-4b]}}.
 \end{equation}
 We will explore the cluster of the origin in $\overline{\mathcal G}$.
 If the cluster reaches a box $B_{z}$, we will try to spread it to the
 neighbouring boxes ($B_{z'}$ for $z'\sim z$) by creating paths that
 lie in their respective corridors $C_{z'}$. For this strategy to
 work, we need the boxes to have good connection probabilities and the
 corridors to be ``sufficiently disjoint'': if the exploration is
 guaranted to visit each corridor at most $\kappa+1$ times, then we do
 need more than $\kappa$ ``sprinkling operations''. These two
 properties are formalized by the following two lemmas.
\begin{lemma}
  \label{sec:geom-sett-boxes}
  For all $\bar x\in V$,
  \begin{equation}
    \label{eq:26}
    \mathrm{Card} \{z\in \Z^2 \:/\: \bar x \in C_z\}\leq \kappa.
  \end{equation}
\end{lemma}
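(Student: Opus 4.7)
My plan is to translate the assertion $\bar x \in C_z$ into a purely planar condition on $\pi_{\mathbf e}(x_{\mathrm{free}})$, and then invoke the multiplicity bound $\kappa$ that was fixed at the start of the section.

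First I would unfold the definition of $C_z$: the condition $\bar x\in C_z$ means that there exists $y\in V$ with $\bar y = \bar x$ belonging to $\mathsf{Graph}_{\mathbf e}(z_1 u + z_2 v + [4a,4b,-4a,-4b])$. Unfolding $\mathsf{Graph}_{\mathbf e}$ and using that $\pi_{\mathbf e}$ is $1$-Lipschitz, this yields
$$\pi_{\mathbf e}(y_{\mathrm{free}}) \in z_1 u + z_2 v + [4a,4b,-4a,-4b] + \mathbb B(R_S).$$

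Next, I would exploit the careful choice of $\mathbf e$ made at the beginning of this subsection, namely $\Lambda \subset \mathrm{Ker}(\pi_{\mathbf e})\times T$. This choice precisely ensures that the quotient map preserves the $\pi_{\mathbf e}$-coordinate: since $y - x \in \Lambda$, the free part of $y-x$ lies in $\mathrm{Ker}(\pi_{\mathbf e})$, so $\pi_{\mathbf e}(y_{\mathrm{free}}) = \pi_{\mathbf e}(x_{\mathrm{free}})$. The previous display then becomes
$$\pi_{\mathbf e}(x_{\mathrm{free}}) - z_1 u - z_2 v \in [4a,4b,-4a,-4b] + \mathbb B(R_S).$$
Property~(3) of a good quadruple, namely $\mathbb B(R_S)\subset [a,b,-a,-b]$, absorbs the thickening and gives
$$\pi_{\mathbf e}(x_{\mathrm{free}}) - z_1 u - z_2 v \in [5a,5b,-5a,-5b].$$
Using the central symmetry of this parallelogram, the last condition is equivalent to $w + z_1 u + z_2 v \in [5a,5b,-5a,-5b]$ with $w = -\pi_{\mathbf e}(x_{\mathrm{free}})$, and the number of $z\in\Z^2$ satisfying this is at most $\kappa$ by the very definition of $\kappa$.

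The argument is mostly bookkeeping with the definition of $\mathsf{Graph}_{\mathbf e}$, so there is no serious obstacle. The one point that truly matters --- and the only conceptual ingredient --- is to recognize that this is the moment to cash in the choice $\Lambda \subset \mathrm{Ker}(\pi_{\mathbf e})\times T$: without it, two preimages of the same $\bar x$ could have wildly different planar projections and no such uniform bound could survive the quotient. The availability of such an $\mathbf e$ is in turn guaranteed by the fact that $\Lambda$ has corank at least $2$, which is the assumption that $H^\bullet \in \tilde{\Grp}$.
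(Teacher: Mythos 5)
Your argument is correct and follows the same route as the paper: the choice $\Lambda\subset\mathrm{Ker}(\pi_{\mathbf e})\times T$ lets you read $\bar x\in C_z$ as a condition on $\pi_{\mathbf e}(x_{\mathrm{free}})$ (this is exactly the paper's equivalence~\eqref{eq:25}), property~(3) of a good quadruple absorbs the $\mathbb{B}(R_S)$ thickening into $[5a,5b,-5a,-5b]$, and the definition of $\kappa$ finishes. Your explicit handling of the central symmetry and of why distinct lifts of $\bar x$ have the same planar projection is just a more detailed unfolding of the same steps.
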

\begin{proof}
  By choice of the basis, equivalence~\eqref{eq:25} holds and implies,
  for any $z=(z_1,z_2)\in \Z^2$,
  \begin{equation}
    \bar x \in C_z \iff x \in \gre{z_1 u +z_2 v +[4a,4b,-4a,-4b]}\}
  \end{equation}
  By the last condition defining a good quadruple,
  \begin{equation}
      \bar x \in C_z \implies \pi(x)\in z_1 u +z_2 v +[5a,5b,-5a,-5b]
  \end{equation}
  The choice of $\kappa$ at the beginning of the section (see
  equation~\eqref{eq:26}) concludes the proof.
\end{proof}

\begin{lemma}
  \label{sec:probabilist-setting}
  For any couple of neighbouring boxes $(B_z,B_{z'})$,
  \begin{equation}
    \label{eq:27}
   \forall \bar x \in B_z,\forall \gamma\in \mathcal{S}(m) \quad \Pp{\bar x+\overline{\gamma}
     \lr[C_{z'}] B_{z'}+\overline{B(1)}}>1-\eta.
 \end{equation}
\end{lemma}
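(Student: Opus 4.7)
The strategy is to lift the event to the base Cayley graph $\mathcal G$, apply the finite-size criterion there, and transfer the estimate to $\overline{\mathcal G}$ through the quotient map. By translation invariance of Bernoulli percolation on $\overline{\mathcal G}$, I may reduce to the case $z=0$, so that $z' \in \{\pm e_1, \pm e_2\}$ and the planar shift from $B_z$ to $B_{z'}$ is $s\in\{\pm u, \pm v\}$.

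Fix $\bar x\in B_0$ and $\gamma\in \mathcal S(m)$, and pick a lift $x\in \gr{[a,b,-a,-b]}$ of $\bar x$. For the given direction $s$, I would select a target $Z\in\mathcal Z_{N,\mathbf e}(a,b,u,v)$ lying on the side of the rhombus $[a,b,-a,-b]$ facing $s$: the four sets $L(a,u), L(u,b), L(b,v), L(v,-a)$ exhaust the two sides facing $+u$ and $+v$, and for the remaining directions $-u, -v$ I would invoke the reflection symmetry $y \mapsto -y$ of Bernoulli percolation on $\mathcal G$ (the generating set $S$ being symmetric) to use the reflected targets $-Z$. In each case, $\mathscr{FC}(p,N,\eta)$ together with translation invariance in $\mathcal G$ yields
\begin{equation}
\Pp{x+\gamma \lr[x+R_{N,\mathbf e}(a,b)] x+Z}>1-\eta.
\end{equation}

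The second step is to push this estimate to the quotient. The hypothesis $\Lambda\cap B(2N+1)=\{0\}$ implies that the quotient map $V\to \overline V$ is injective on every translate of $B(N)$, in particular on $x+B(N)\supseteq x+R_{N,\mathbf e}(a,b)$. Under the natural coupling that declares an edge of $\mathcal G$ open iff its image in $\overline{\mathcal G}$ is open, edges with both endpoints in $x+B(N)$ are in bijection with their images and their states are i.i.d.\ Bernoulli$(p)$ in both graphs; hence the probability of the base-graph event above equals the probability of the analogous quotient event $\bar x+\overline\gamma \lr[\overline{x+R_{N,\mathbf e}(a,b)}] \overline{x+Z}$ in $\overline{\mathcal G}$.

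To conclude, it remains to verify the geometric inclusions $\overline{x+R_{N,\mathbf e}(a,b)}\subset C_{z'}$ and $\overline{x+Z}\subset B_{z'}+\overline{B(1)}$, which upgrade the quotient event to the one asserted by the lemma. Both reduce, after applying $\pi_{\mathbf e}$ (which descends to $\overline{\mathcal G}$ since $\Lambda\subset\mathrm{Ker}(\pi_{\mathbf e})\times T$), to planar Minkowski-sum inclusions built on $[3a,3b,-3a,-3b]+[a,b,-a,-b]\subset[4a,4b,-4a,-4b]$, on $\mathbb B(R_S)\subset[a,b,-a,-b]$, and on the direction-specific placement of $Z$. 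I expect the main obstacle to be the uniformity of these inclusions in the lift $x$: when $\pi_{\mathbf e}(x_{\text{free}})$ lies near the far corner of $B_0$ relative to $s$, the translated region $x+R_{N,\mathbf e}(a,b)$ can project outside $C_{z'}$ unless $Z$ is chosen carefully among the two half-edges on the facing side and the reflection symmetry is deployed compatibly. Modulo this geometric bookkeeping, the probabilistic content is an immediate consequence of $\mathscr{FC}(p,N,\eta)$.
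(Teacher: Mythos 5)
Your proposal follows essentially the same route as the paper: apply the finite-size criterion in $\mathcal G$, transfer it to $\overline{\mathcal G}$ using that $\Lambda\cap B(2N+1)=\{0\}$ makes the quotient map injective on the relevant ball, translate to $\bar x$, and then choose $Z\in\mathcal Z_{N,\mathbf e}(a,b,u,v)$ according to the position of $\bar x$ inside $B_z$ (this position-dependent choice between the two half-segments of the facing side is exactly the paper's ``key geometric observation'', illustrated in its Figures 6 and 7), with the inversion $y\mapsto -y$ available to handle the directions $-u,-v$. One small correction: the careful choice of $Z$ is what secures the target inclusion $\bar x+\overline{Z}\subset B_{z'}+\overline{B(1)}$, not the corridor inclusion $\bar x+\overline{R_{N,\mathbf e}(a,b)}\subset C_{z'}$ --- the region $R_{N,\mathbf e}(a,b)$ does not depend on $Z$, so varying $Z$ cannot repair that inclusion; the corridor containment must hold uniformly over $\bar x\in B_z$ by itself (the paper asserts it directly), so you have slightly misdiagnosed where the geometric work sits, but this does not change the substance of the argument.
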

\begin{proof}
  We assume that $z'=z+(0,1)$. The cases of $z+(1,0)$, $z+(0,-1)$ 
  and  $z+(-1,0)$ are treated the same way.

  The assumption $\Lambda \cap B(2N+1)=\{0\}$ implies that 
  $\overline{R_{N,\mathbf e}(a,b)}$ is isomorphic (as a graph) to 
  $R_{N,\mathbf e}(a,b)$.  It allows us to derive from 
  estimate~\eqref{eq:20} that
\begin{equation}
  \label{eq:28}
  \Pp{\overline{\gamma} \lr[\overline{R_{N,\mathbf e}(a,b)}] \overline{Z}}>
  1-\eta.
 \end{equation}
 Now let $B_z$ and $B_{z'}$ be two neighbouring boxes. Let $\bar x$ be any
 vertex of $B_z$. By translation invariance, we get from~\eqref{eq:28}
 that
\begin{equation}
  \Pp{x+\overline{\gamma} \lr[\bar x+\overline{R_{N.\mathbf{e}}(a,b)}] \bar x
  +\overline{Z}}> 1-\eta.
\end{equation}
Here comes the key geometric observation: there exists $Z\in\mathcal
Z_{N,\mathbf e}(a,b,u,v)$ such that
\begin{equation}
  \bar x+\overline{Z}\subset B_{z'}+\overline{B(1)}.
\end{equation}
This is illustrated on Figures~\ref{fig:trickleft} and \ref{fig:trick} when $z=(0,0)$ and
$z'=(0,1)$.  Besides, $\bar x+\overline{R_N(a,b)}\subset C_{z'}$. Hence, by monotonicity,
we obtain that
\begin{equation}
   \Pp{\bar x+\overline{\gamma} \lr[C_{z'}]
    B_{z'}+\overline{B(1)}}>  1-\eta.
\end{equation}
\end{proof}

\begin{figure}[htbp]
  \centering
      \includegraphics[width=.75\textwidth]{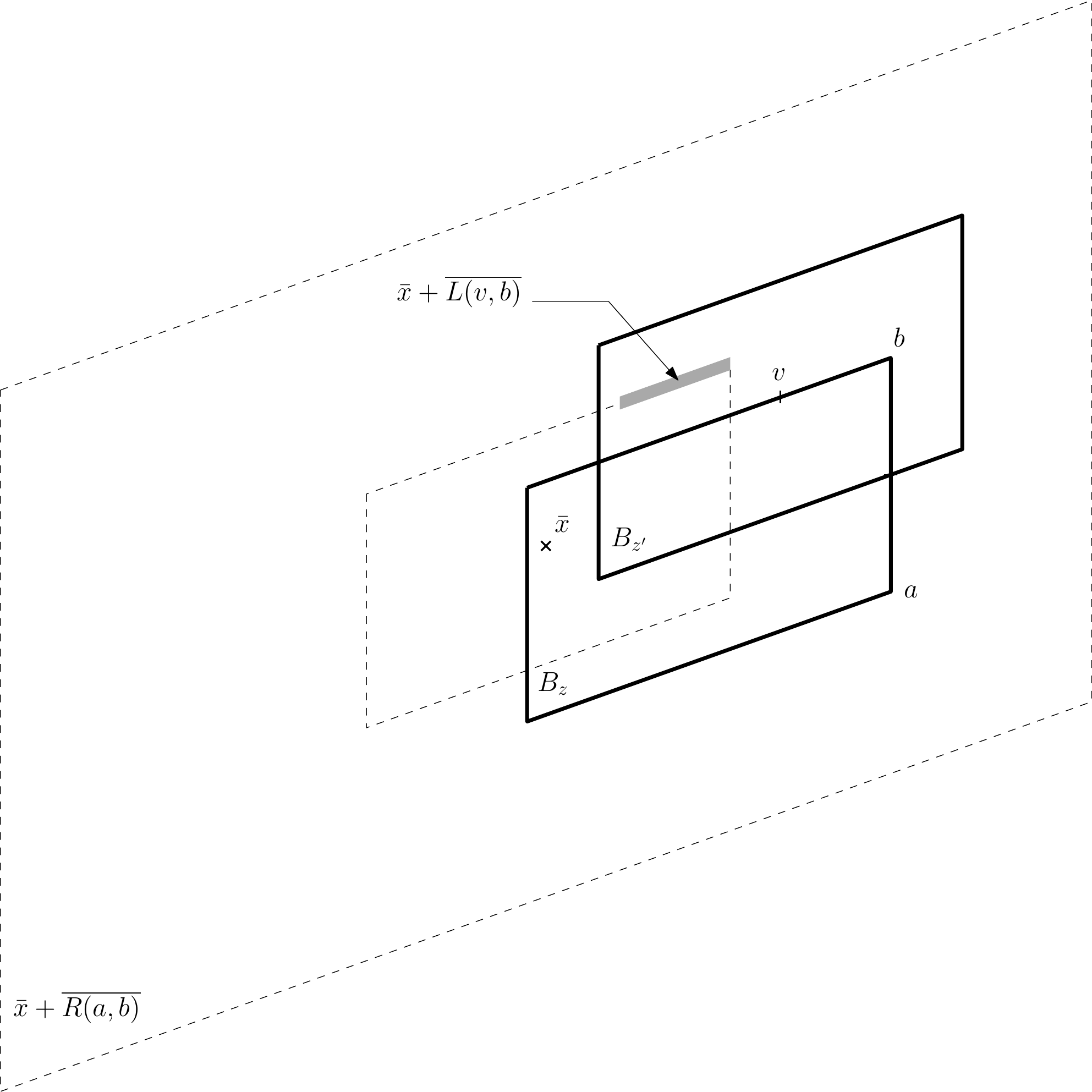}
   \caption{If $\bar x$ is on the left of the box
$B_z$, then $\bar x+\overline{L(v,b)}\subset B_{z'}$.}
  \label{fig:trickleft}
\end{figure}

\begin{figure}[htbp]
  \centering
      \includegraphics[width=.75\textwidth]{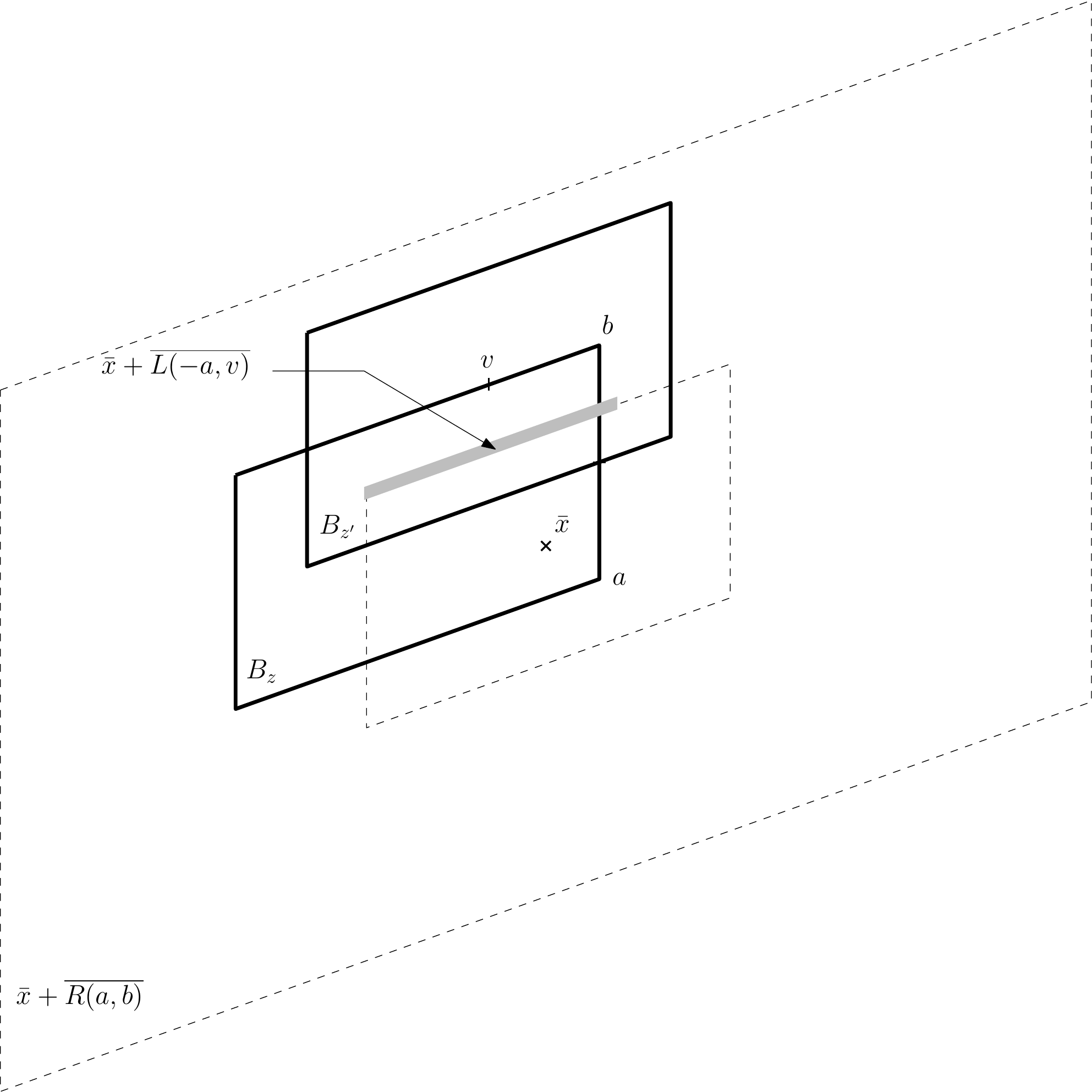}
   \caption{If $\bar x$ is on the right of the box
$B_z$, then $\bar x+\overline{L(-a,v)}\subset B_{z'}$.}
  \label{fig:trick}
\end{figure}

\subsection{Probabilistic setting}

Let $\omega_0$ be Bernoulli percolation of parameter $p$ on
$\overline{\mathcal G}$. In order to apply a ``sprinkling argument'',
we define for every $z\in \Z^2$ a sequence $(\xi^z(e))_{e \text{ edges
    in }C_z}$ of independent Bernoulli variables of parameter
$\tfrac\delta\kappa$. In other words, $\xi^z$ is a
$\tfrac\delta\kappa$-percolation on $C_z$. We assume that $\omega_0$
and all the $\xi^z$'s are independent. Lemma~\ref{sec:geom-sett-boxes}
implies that at most $\kappa+1$ Bernoulli variables are associated to
a given edge $e$: $\omega_0(e)$ and the $\xi^z(e)$'s for $z$ such that
$e\subset C_z$.

To state lemma~\ref{sec:lemma-1}, we also need the notion
of edge-boundary. The \emph{edge-boundary} of a set $A$ 
of vertices is the set of the edges of $\mathcal{G}$ with exactly one endpoint
in $A$. It is denoted by $\Delta A$.

\begin{lemma}\label{sec:lemma-1}
  Let $B_z$ and $B_{z'}$ be two neighbouring boxes. Let $H$ be a subset
  of $\overline{V}$. Let $(\omega(e))_{e\in E}$ be a family of
  independent Bernoulli variables of parameter $\Px{\omega(e)=1}\in
  [p,1)$ independent of $\xi^{z'}$. If there exists $\bar x\in B_z$ 
  and $\gamma\in\mathcal S(m)$ such that
  $\bar x+\overline\gamma \subset H$, then
 \begin{equation}
   \Px{ \left. H \lrc[C_{z'}]{\omega\vee
         \xi^{z'}} B_{z'} + \overline{B(1)} \: \right | \:
     \forall e\in \Delta H, \, \omega(e)=0 } \ge p_0.
  \end{equation}
\end{lemma}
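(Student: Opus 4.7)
Set $q := \delta/\kappa$ and $D := \Delta H \cap C_{z'}$. The plan is a sprinkling argument combined with stochastic dominance. Since $\bar x+\overline\gamma\subset H$, it suffices to lower-bound the probability of the stronger event
\begin{equation}
A^* := \bigl\{\bar x+\overline{\gamma}\lrc[C_{z'}]{\omega\vee\xi^{z'}} B_{z'}+\overline{B(1)}\bigr\},
\end{equation}
conditioned on $\omega|_D=0$ (equivalent to the given conditioning since $A^*$ depends only on edges in $C_{z'}$). Edges of $C_{z'}\setminus D$ are independent of $\omega|_D$, so conditionally the law of $(\omega\vee\xi^{z'})|_{C_{z'}}$ is a product measure: Bernoulli$(q)$ on $D$ (via $\xi^{z'}$ only) and Bernoulli$\geq 1-(1-p)(1-q)\geq p$ on $C_{z'}\setminus D$. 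It therefore stochastically dominates the product measure $\mu$ which is Bernoulli$(q)$ on $D$ and Bernoulli$(p)$ on $C_{z'}\setminus D$, and it suffices to show $\Px{A^*(\mu)}\geq p_0$. Realize $\mu$ by coupling it with a $p$-Bernoulli percolation $\omega_p$ on $C_{z'}$ and an independent $q$-Bernoulli $\xi$ on $D$, setting $\mu|_{D^c\cap C_{z'}}=\omega_p|_{D^c\cap C_{z'}}$ and $\mu|_D=\xi$. Lemma~\ref{sec:probabilist-setting} then gives $\Px{A^*(\omega_p)}\geq 1-\eta$.

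The technical heart is to prove, for every $t\in\N$,
\begin{equation}
\Px{A^*(\mu)}\geq \bigl(1-\eta(1-p)^{-t}\bigr)\bigl(1-(1-q)^t\bigr) \geq 1-(1-q)^t-\eta(1-p)^{-t},
\end{equation}
after which taking the supremum over $t$ yields $\Px{A^*(\mu)}\geq p_0$. To establish this, condition on $\omega_p|_{D^c}$: let $K_1,K_2$ denote the $\omega_p|_{D^c}$-clusters of $\bar x+\overline\gamma$ and $B_{z'}+\overline{B(1)}$ and form the auxiliary multigraph on $\omega_p|_{D^c}$-clusters of $C_{z'}$ whose edges are the elements of $D$. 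Let $M$ be the maximum number of edge-disjoint paths in this multigraph from $K_1$ to $K_2$ (with $M=\infty$ if $K_1=K_2$). Since the event $A^*(\omega_p)^c$ forces each of those $M$ edge-disjoint paths to contain an $\omega_p$-closed $D$-edge, one obtains $\Px{A^*(\omega_p)^c\mid\omega_p|_{D^c}}\geq(1-p)^M\mathbf 1_{M<\infty}$; a Markov-type inequality applied to the random variable $(1-p)^M$ then yields $\Px{M\leq t}\leq \eta(1-p)^{-t}$. On the event $\{M>t\}$, the independence of $\xi$ from $\omega_p|_{D^c}$ provides that $A^*(\mu)$ fails only when sprinkling closes a full-path worth of $D$-edges on each of the $t$ edge-disjoint candidate paths, which (given only one $D$-edge is needed per path) has probability at most $(1-q)^t$.

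The main obstacle is precisely this last matching of the $1-(1-q)^t$ factor. Menger's theorem produces $M$ edge-disjoint paths in the contracted multigraph, but a priori each may traverse several $D$-edges, in which case the sprinkling success probability per path drops from $q$ to $q^k$. I expect this will be resolved by focusing on \emph{direct} $D$-bridges between $K_1$ and $K_2$ rather than arbitrary multi-hop contracted paths, and by using the geometry of boxes and corridors from section~\ref{sec:good-blocks} together with the uniqueness of the infinite cluster (Lemma~\ref{lem:perco}) to argue that on $\{M>t\}$ one may extract $t$ such direct single-edge bridges. Once this geometric refinement is settled, the combination of stochastic dominance, the Markov bound $\Px{M\leq t}\leq \eta(1-p)^{-t}$, and the independence of the sprinkling closes the argument.
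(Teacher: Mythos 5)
You have the right architecture --- reduce to a product measure, split according to whether a random ``interface'' set between $H$ and the target is small or large, bound the small case by $\eta(1-p)^{-t}$ using the a priori estimate of Lemma~\ref{sec:probabilist-setting}, and beat the large case by sprinkling one edge --- but the combinatorial object you chose (edge-disjoint paths in the cluster-contracted multigraph, via Menger) does not do the job, and you say so yourself. The gap is real: a contracted path may use arbitrarily many $D$-edges, so its sprinkling cost is $q^{k}$ with $k$ unbounded, and $\{M>t\}$ gives no control of the form $1-(1-q)^{t}$. Your hoped-for rescue (geometry of the corridors, uniqueness of the infinite cluster) is not the right direction; nothing geometric is needed.

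The fix is to choose a different interface set that is simultaneously a cut set and a set of \emph{single-edge} bridges. Define $W\subset\Delta H$ to be the random set of edges $\{\bar u,\bar v\}$ contained in $C_{z'}$ with $\bar u\in H$, $\bar v\in C_{z'}\setminus H$, such that $\bar v$ is joined to $B_{z'}+\overline{B(1)}$ by an $\omega$-open path lying in $C_{z'}$ and using no edge with an endpoint in $H$. Three facts replace your Menger argument. First, $W$ is measurable with respect to the $\omega$-states of edges having no endpoint in $H$, hence independent of $\omega|_{\Delta H}$. Second, $W$ is a cut set: given any $\omega$-open path in $C_{z'}$ from $\bar x+\overline\gamma$ to $B_{z'}+\overline{B(1)}$, its last exit edge from $H$ belongs to $W$; so closing all of $W$ destroys the connection, and independence gives $(1-p)^{t}\,\Px{|W|\le t}\le \Px{\text{no }\omega\text{-connection}}<\eta$, i.e.\ $\Px{|W|\le t}\le\eta(1-p)^{-t}$ --- this is your Markov-type bound, now for $|W|$. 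Third, and this is exactly what your $M$ lacks, every element of $W$ is a one-edge bridge: $\xi^{z'}$-opening a single edge of $W$ connects $H$ to $B_{z'}+\overline{B(1)}$ inside $C_{z'}$ in $\omega\vee\xi^{z'}$, because its outer endpoint is already $\omega$-connected to the target by a path avoiding the conditioned-closed edges of $\Delta H$. On $\{|W|>t\}$ the failure probability is therefore at most $(1-q)^{t}$, and the two regimes combine to give $1-(1-\delta/\kappa)^{t}-\eta(1-p)^{-t}\ge p_0$ after optimizing over $t$. With $W$ in place of $M$ your write-up goes through; without it, the key step is missing.
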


\begin{proof}
  In all this proof, the marginals of $\omega$ are assumed to be
  Bernoulli random variables of parameter $p$. The more general
  statement of Lemma~\ref{sec:lemma-1} follows by a stochastic
  domination argument. The case $H\cap
  (B_{z'}+\overline{B(1)})\neq\emptyset$ being trivial, we assume that
  $H\cap(B_{z'}+\overline{B(1)})= \emptyset$.

  Let $W \subset \Delta H$
  be the (random) set of edges $\{\bar x,\bar y\}\subset C_{z'}$ such that
  \begin{enumerate}[(i)]
  \item $\bar x \in H$, $\bar y \in C_{z'} \setminus H$ and
  \item there is an $\omega$-open path joining $\bar y$ to
    $B_{z'}+\overline{B(1)}$, lying in $C_{z'}$, but using no edge
    with an endpoint in $H$.
  \end{enumerate}
  In a first step, we want to say that $|W|$ cannot be too small. The
  inclusions $\bar x+\overline\gamma\subset H \subset
  (B_{z'}+\overline{B(1)})^c$ imply that any $\omega$-open path from
  $\bar x+\overline\gamma$ to $B_{z'}+\overline{B(1)}$ must contain at
  least one edge of $W$. Thus, there is no $\omega$-open path
  connecting $\bar x+\overline\gamma$ to $B_{z'}+\overline{B(1)}$ in
  $C_{z'}$ when all the edges of $W$ are $\omega$-closed.
  Consequently, for any $t\in\N$, we have
  \begin{align}
    \Px{\left(\bar x+\overline\gamma\lrc[C_{z'}]{\omega} B_{z'}+
    \overline{B(1)}\right)^{\!\!c}\,} &\geq \Px{ \textrm{all
        edges in $W$ are $\omega$-closed} }\\
    &\geq {(1-p)}^t \Px{|W|\leq t}.
  \end{align}
  To get the last inequality above, remark that the random set $W$ is
  independent from the $\omega$-state of the edges in $\Delta
  H$. Using estimate~\eqref{eq:27}, it can be rewritten as
  \begin{equation}
    \label{eq:29}
     \Px{|W| \leq t} \leq \eta {(1-p)}^{-t}.
  \end{equation}
  We distinguish two cases. Either $W$ is small, which has a
  probability estimated by equation~\eqref{eq:29} above; or $W$ is
  large, and we use in that case that $B_{z'}+\overline{B(1)}$ is
  connected to $H$ as soon as one edge of $W$ is $\xi^{z'}$-open. The
  following computation makes it quantitative:
  \begin{align}
    &\Px{H \lrc[C_{z'}]{\omega\vee \xi^{z'}} B_{z'}+\overline{B(1)} \big |
      \forall e \in \Delta H, \: \omega(e)=0}\\
    &\quad \geq \Px{\big.\textrm{at least one edge of $W$ is $\xi^{z'}$-open}
      \big |
      \forall e \in \Delta H, \: \omega(e)=0} \\
    &\quad = \Px{\big.\textrm{at least one edge of $W$ is $\xi^{z'}$-open}}\\
    &\quad \geq \Px{\big.\textrm{at least one edge of $W$ is $\xi^{z'}$-open and
      } |W|>t} \\
    &\quad \geq 1 -\Px{\textrm{all the edges of $W$ are
        $\xi^{z'}$-open} \big | |W|>t} - \Px{\big.|W|\leq t}.
  \end{align}
  Using equation~\eqref{eq:29}, we conclude that, for any $t$,
  \begin{equation}
    \label{eq:30} \Px{H \lrc[C_{z'}]{\omega\vee \xi^{z'}}
A \big | \forall e \in \Delta H, \:
\xi^{z'}(e)=0}\geq 1-{(1-\delta/\kappa)}^t -\eta(1-p)^{-t}.
  \end{equation}
  Our choice of $\eta$ in~\eqref{eq:23} make the right hand side
  of~\eqref{eq:30} larger than $p_0$.
\end{proof}

\begin{lemma}
    \label{sec:probabilist-setting-1} With positive probability, the origin is connected
  to infinity in the configuration
    \[
     \omega_{\mathrm{total}}:=\omega_0\vee \bigvee_{z\in\Z^2} \xi^z.
    \]
\end{lemma}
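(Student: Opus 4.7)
The plan is to couple $\omega_{\mathrm{total}}$ on $\overline{\mathcal G}$ with a Bernoulli site percolation on $\Z^2$ via a box-by-box exploration, and to show that the renormalized process thus obtained is supercritical. The initialization step is as follows: fix once and for all a self-avoiding path $\gamma_0$ of length $m$ starting at $0 \in V$ and condition on the positive-probability event that every edge of $\overline{\gamma_0}$ is $\omega_0$-open. On this event, the $\omega_{\mathrm{total}}$-cluster of $\bar 0$ contains $\bar 0 + \overline{\gamma_0}$, which provides the initial seed required to apply Lemma~\ref{sec:lemma-1} at the renormalized site $0 \in \Z^2$.

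We then run a breadth-first exploration on $\Z^2$ starting from $0$, declaring a site $z \in \Z^2$ to be \emph{good} once the currently explored cluster $H$ of $\bar 0$ contains a translate $\bar x_z + \overline{\gamma_z}$ with $\bar x_z \in B_z$ and $\gamma_z \in \mathcal S(m)$. To propagate from a good site $z$ to an unvisited neighbour $z'$, we reveal the previously unexamined states of $\omega_0$ and of $\xi^{z'}$ on edges lying in the corridor $C_{z'}$. By construction of the exploration, every edge of $\Delta H$ that has been examined so far is $\omega_0$-closed, so Lemma~\ref{sec:lemma-1} applies and guarantees that, conditionally on the full history, the explored cluster extends through $\omega_0 \vee \xi^{z'}$ to $B_{z'} + \overline{B(1)}$ with probability at least $p_0 > \mathrm{p_c}^{\mathrm{site}}(\Z^2)$. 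The independence of the family $(\xi^z)_{z \in \Z^2}$ together with the bounded overlap of corridors (Lemma~\ref{sec:geom-sett-boxes}) ensure that each propagation step uses fresh randomness disjoint from what has been revealed at previous steps, so the bound $p_0$ applies uniformly over the exploration.

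It follows that the set of good sites in $\Z^2$ stochastically dominates a Bernoulli site percolation of parameter $p_0$; with positive probability, this dominated process contains an infinite connected component through $0$. Since distinct good sites correspond to boxes $B_z$ whose planar centres $z_1 u + z_2 v$ tend to infinity, the $\omega_{\mathrm{total}}$-cluster of $\bar 0$ is necessarily infinite on this event, which proves the lemma.

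The main obstacle is the inductive propagation of the seed invariant: once the explored cluster is connected to $B_{z'} + \overline{B(1)}$, one must exhibit a new seed $\bar x_{z'} + \overline{\gamma_{z'}}$ based in $B_{z'}$ using only edges that have already been examined, so as not to disturb the conditional-independence structure underlying the comparison with site percolation. The natural approach is to trace the newly opened connecting path back into $B_{z'}$ and extract a sub-path of length $m$ from it; verifying that such a deterministic extraction can always be performed, with any residual failure probability absorbed into the small parameter $\eta$ that governs the choice of $p_0$, is the delicate point of the argument.
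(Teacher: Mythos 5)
Your architecture is the same as the paper's: a box-by-box exploration, Lemma~\ref{sec:lemma-1} giving conditional success probability at least $p_0>\mathrm{p_c}^{\!\!\!\mathrm{site}}(\Z^2)$ at each renormalized site, and comparison with supercritical site percolation on $\Z^2$ (the paper invokes Lemma~1 of Grimmett--Marstrand for that last step). However, the proposal stops exactly at the point you yourself label ``the delicate point'': you never establish that a successful step produces a new seed $\bar x'+\overline{\gamma'}$ with $\bar x'\in B_{z'}$ and $\gamma'\in\mathcal S(m)$ inside the already-explored cluster, and without this the induction does not close, since the hypothesis of Lemma~\ref{sec:lemma-1} cannot be verified at the next step. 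This is a genuine gap rather than a routine check: your suggested extraction (``trace the newly opened connecting path back into $B_{z'}$ and extract a sub-path of length $m$'') fails when that connecting path is short, and there is no ``residual failure probability'' that could be absorbed into $\eta$ --- the extraction has to be deterministic, because a random failure at this stage is not controlled by the finite-size criterion.

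The paper's resolution is structural and is precisely the ``seeds are long self-avoiding paths'' idea of Section~3.3 and Lemma~\ref{lem:path}: at each step the algorithm explores the \emph{entire} cluster $H_{t+1}$ of the origin in $\omega_{t+1}$ (not only the new connecting path), declares success when $H_{t+1}\cap B_{z_t}\neq\emptyset$, and extracts the new seed from the connected set $H_{t+1}$, which contains both the entry point $\bar x\in B_{z_t}$ and the earlier seeds (in particular the length-$m$ self-avoiding path found in $H_0$ at initialization). A connected graph containing a vertex $v$ together with a self-avoiding path of length $m$ always contains a self-avoiding path of length at least $\lceil m/2\rceil$ issued from $v$ (concatenate a shortest path from $v$ to the old seed with the longer remaining piece of that seed); this is why $m$ is chosen via Lemma~\ref{lem:path} so that \emph{every} self-avoiding path of the relevant length is a valid seed. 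Since the whole of $H_{t+1}\cup\Delta H_{t+1}$ is revealed at each step anyway, the extraction is measurable with respect to the explored information and costs nothing probabilistically; Lemma~\ref{sec:lemma-1} is formulated exactly so as to condition on $\Delta H$ being closed. Two smaller corrections. First, your initialization (forcing a fixed $\gamma_0$ to be open) works, but is not needed: the paper simply tests whether $H_0$ contains some path of $\mathcal S(m)$. Second, the conditional uniformity of the bound $p_0$ is \emph{not} a consequence of the bounded overlap of corridors: corridors do overlap, and part of $\omega_0$ inside $C_{z'}$ has typically already been revealed. What saves the argument is that revealed open edges only help, the revealed closed edges are exactly those of $\Delta H$, and the unrevealed edges retain marginals in $[p,1)$ --- the constant $\kappa$ of Lemma~\ref{sec:geom-sett-boxes} is used only to dominate $\omega_{\mathrm{total}}$ by a $(p+\delta)$-percolation and to calibrate $\eta$ in the definition of $p_0$.
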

Lemma~\ref{sec:probabilist-setting-1} concludes the proof of
Lemma~\ref{lem:renormalization} because $\omega_{\mathrm{total}}$ is
stochastically dominated by a $(p+\delta)$-percolation. Indeed,
$(\omega_{\mathrm{total}}(e))_e$ is an independent sequence of
Bernoulli variables such that, for any edge $e$,
\begin{equation}
  \Px{\omega_{\mathrm{total}}(e)=1}\geq 1-(1-p)(1- \delta/\kappa)
  ^{\kappa}\geq p+\delta.
\end{equation}
  \begin{proof}[Proof of Lemma~\ref{sec:probabilist-setting-1}]
    The strategy of the proof is similar to the one described in the
    original paper of Grimmett and Marstrand: we explore the Bernoulli
    variables one after the other in an order prescribed by the
    algorithm hereafter. During the exploration, we define
    simultaneously random variables on the graph
    $\overline{\mathcal{G}}$ and on the square lattice
    $\Z^2$.

\medbreak
\fbox{\begin{minipage}{0.9\textwidth}\begin{center}{\bf Algorithm} \end{center}

    \begin{itemize}
    \item[($0$)] Set $z(0)=(0,0)\in \Z^2$. Explore the connected
      component $H_0$ of the origin in $\mathcal G$ in the
      configuration $\omega_0$. Notice that only the edges of
      $H_0\cup\Delta H_0$ have been explored in order to determine
      $H_0$.
      \begin{itemize}
      \item If $H_0$ contains a path of $\mathcal S(m)$, set
        $X((0,0))=1$ and $(U_0,V_0)=(\{0\},\emptyset)$ and move  to ($t=1$).
      \item Else, set $X((0,0))=0$ and $(U_0,V_0)=(\emptyset,\{0\})$
        and move to ($t=1$).
      \end{itemize}

    \item[($t$)] Call \emph{unexplored} the vertices in $\Z^2 \setminus
      (U_t\cup V_t)$. Examine the set of unexplored vertices
      neighbouring an element of $U_t$. If this set is empty, define
      $(U_{t+1},V_{t+1})=(U_t,V_t)$ and move to ($t+1$). Otherwise,
      choose such an unexplored vertex $z_t$. In the configuration
      $\omega_{t+1}:=\omega_t \vee \xi^{z_t}$, explore the connected
      component $H_{t+1}$ of the origin.
      \begin{itemize}
      \item If $H_{t+1}\cap B_{z_t}\neq\emptyset$, which means in
        particular that $B_{z_t}$ is connected to $0$ by an
        $\omega_{t+1}$-open path, then set $X(z_t)=1$ and $ (U_{t+1},V_{t+1})=(U_t \cup
        \{z_t\}, V_t)$ and move to ($t+1$).
      \item  Else set $X(z_t)=0$ and $(U_{t+1},V_{t+1})=(U_t, V_t \cup
        \{z_t\})$ and move to ($t+1$).
      \end{itemize}

    \end{itemize}
  \end{minipage}}
\medbreak
\medskip  This algorithm defines in particular:
\begin{itemize}
\item a random process growing in the lattice $\Z^2$,
  $$S_0=(U_0,V_0),S_1=(U_1,V_1),\ldots$$
\item a random sequence $(X(z_t))_{t\geq 0}$.
\end{itemize}
Lemma~\ref{sec:lemma-1} ensures that for all $t\ge1$, whenever $z_t$ is
  defined,
\begin{equation}
  \label{eq:31}
\Px{X(z_t)=1\left|S_0,S_1,\ldots S_{t-1}\right.}
  \ge p_0 >\mathrm{p_c}^{\!\!\!\mathrm{site}}(\Z^2).
\end{equation}
Estimate~\eqref{eq:31} states that each time we explore a new site
$z_t$, whatever the past of the exploration is, we have a
sufficiently high probability of success: together with Lemma~1
of~\cite{gm}, it ensures that
\begin{equation}
   \Px{\left|U\right|=\infty}>0,
\end{equation}
where $U:=\bigcup_{t\geq 0}U_t$ is the set of $z_t$'s such that
$X(z_t)$ equals~$1$. For such $z_t$'s, we know that $B_{z_t}$
is connected to the origin of $\overline{\mathcal G}$ by an
$\omega_{t+1}$-open path. Hence, when $U$ is infinite,
there must exist an infinite open connected component in the configuration
\[\omega_0\vee \bigvee_{t\geq 0} \xi^{z_t},\] 
which is a subconfiguration of $\omega_{\mathrm{total}}$, and Lemma~\ref{sec:probabilist-setting-1} is established.
\end{proof}

\section*{Acknowledgements}
\label{sec:acknowledgements}

We are grateful to Vincent Beffara for valuable discussions,
helpful comments on the first versions of this paper, and more generally
for precious advice all along the project. We also thank Itai
Benjamini for useful dicussions and comments on the paper, Hugo
Duminil-Copin for having initiated this project and Micka\"el de la
Salle for pointing out the compactness argument of
lemma~\ref{lem:reduce}.

\subsection*{References}

\renewcommand\refname{ }
\vspace{-0.5 cm}

\vspace{2cm}
\raggedleft
  {\sc
  UMPA, Ens de Lyon
  \smallskip

  Lyon, France
   \smallskip}
 
     \texttt{sebastien.martineau@ens-lyon.fr}\\
 
    \texttt{vincent.tassion@ens-lyon.fr}

\end{document}